\newcommand{\hidden}[1]{}
\newtheorem{lemma}{Lemma}[section]
\newtheorem{problem}{Problem}[section]
\theoremstyle{definition}
\newtheorem{definition}{Definition}[section]
\newtheorem{remark}{Remark}[section]
\numberwithin{equation}{section}
\theoremstyle{plain}
\newtheorem{thm}{Theorem}[section]
\newtheorem{cor}{Corollary}[section]
\newtheorem{prop}{Proposition}[section]
\newcommand{\Z}{\mathbb{Z}}
\newcommand{\Q}{\mathbb{Q}}
\newcommand{\R}{\mathbb{R}}
\newcommand{\N}{\mathbb{N}}
\newcommand{\RR}{\mathbb{R}}
\newcommand{\NN}{\mathbb{N}}
\newcommand{\ZZ}{\mathbb{Z}}
\newcommand{\PP}{\mathbb{P}}
\newcommand{\cW}{\mathcal{W}}
\newcommand{\cC}{\mathcal{C}}
\newcommand{\sC}{\mathscr{C}}
\newcommand{\ba}{\mathbf{a}}
\newcommand{\br}{\mathbf{r}}
\newcommand{\bs}{\mathbf{s}}
\newcommand{\bx}{\mathbf{x}}
\newcommand{\by}{\mathbf{y}}
\newcommand{\bu}{\mathbf{u}}
\newcommand{\be}{\mathbf{e}}
\newcommand{\bv}{\mathbf{v}}
\newcommand{\bp}{\mathbf{p}}
\newcommand{\bP}{\mathbf{P}}
\newcommand{\bL}{\mathbf{L}}
\newcommand{\bZ}{\mathbf{Z}}
\newcommand{\bY}{\mathbf{Y}}
\newcommand{\bq}{\mathbf{q}}
\newcommand{\bzero}{\mathbf{0}}
\newcommand{\Sym}{\mathrm{Sym}}
\newcommand{\SL}{\mathrm{SL}}
\newcommand{\bX}{\mathbf{X}}
\newcommand{\vx}{\mathbf{x}}
\DeclareMathOperator{\e}{e}
\newcommand{\eps}{\varepsilon}
\newcommand{\DI}{\mathbf{DI}}
\newcommand{\Sing}{\mathbf{Sing}}
\newcommand{\VSing}{\mathbf{VSing}}
\newcommand{\Bad}{\mathbf{Bad}}
\newcommand{\bfW}{\mathbf{W}}
\newcommand{\FS}{\mathbf{FS}}
\newcommand{\bfE}{\mathbf{E}}
\newcommand{\bT}{\mathbf{T}}
\newcommand{\bG}{\mathbf{G}}
\newcommand{\bH}{\mathbf{H}}
\newcommand{\Spec}{\mathbf{Spec}}
\newcommand {\ignore}[1]  {}
\newif\ifdraft\drafttrue
\newcommand{\btheta}{{\bf x}}
\newcommand{\x}{{\bf x}}
\renewcommand{\emptyset}{\varnothing}
\renewcommand{\setminus}{\smallsetminus}
\newcommand{\ourappendix}{the last section}
\begin{document}

%\title{ Dirichlet improvable vectors are not just \\   Bad and Singular}

\title{ Dirichlet is not just    Bad  \\ and Singular}

\author{ Victor Beresnevich\footnote{Research partly supported by EPSRC Programme grant EP/J018260/1} \\ {\small\sc (York) }   \and  Lifan Guan\footnote{Research supported by EPSRC Programme grant EP/J018260/1 and ERC Consolidator grant 648329}  \\ {\small\sc (G\"ottingen) } \and Antoine Marnat\footnote{Research supported by EPSRC Programme grant EP/J018260/1 and FWF Project I 3466-N35} \\ {\small\sc (TU Graz) } \and  ~ \and
 Felipe Ramirez\footnote{Research partly supported by EPSRC Programme grant EP/J018260/1} \\ {\small\sc (Wesleyan)}  \and
Sanju Velani\footnote{Research partly supported by EPSRC Programme grant EP/J018260/1}\ \\ {\small\sc (York) }}

\date{\emph{}}

%\date{\small\today}

\maketitle

\centerline{{\it Dedicated to Pru and Chim - both 60no and going strong!}}

\bigskip

\begin{abstract}
It is well known that in dimension one the set of Dirichlet improvable real numbers consists  precisely of badly approximable and singular numbers.  We show that in higher dimensions this is not the case by proving that there exist continuum many  Dirichlet improvable vectors that are neither badly approximable nor singular.   This is a consequence of a stronger statement {that involves} {very} well approximable {points}. {In  \ourappendix{} we formulate the notion of intermediate Dirichlet improvable sets {concerning approximations by rational planes of every intermediate dimension} and show that they coincide.  This naturally extends a classical theorem of Davenport $\&$ Schmidt (1969) which states that the simultaneous form of Dirichlet's theorem is improvable if and only if the dual form is improvable. {Consequently,} our {main} ``continuum'' result is equally valid for the corresponding intermediate Diophantine sets of badly approximable, singular and Dircihlet improvable points.}
\end{abstract}

\textit{Subject classification}: 11J13, 11H06

\renewcommand{\baselinestretch}{1}

\parskip=1ex

\maketitle

\section{Introduction}

\subsection{Background and motivation \label{BM}}

The {main} goal of this paper is to investigate the relation between three basic sets arising from  Dirichlet's fundamental theorem in the classical theory of Diophantine approximation.  It is therefore natural to start with the statement of the theorem and in turn describe the associated sets.  For $ x \in \RR$, let $ \langle x\rangle := \min \{ | x-m | : m \in \ZZ \} $
denote the distance from $x$ to the nearest integer and for $ \vx=(x_1, \ldots,x_n) \in \RR^n$ let
$$
\langle \vx\rangle := \max_{1 \le i \le n} \langle  x_i\rangle \, .
$$

\begin{thm}[Dirichlet\index{Dirichlet's theorem}]\label{Dir}
 For any $\vx \in \RR^n$ and
  $N \in \mathbb{N}$, there exists $q \in \mathbb{Z}$ such that
  \begin{equation}\label{h7.0}
  \langle q \vx\rangle  < N^{-\frac1n} \qquad { and }  \qquad  1 \le  q \le N \, .
    %\max \{ \|qx_1\|^{1/i_1} , \ldots , \ \|qx_n\|^{1/i_n} \,  \}  < N^{-1} \qquad { and }  \qquad  1 \le  q \le N \, .
  \end{equation}
\end{thm}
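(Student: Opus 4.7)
The plan is to derive Dirichlet's theorem from Minkowski's first theorem on convex bodies, applied to the $n+1$ linear forms $L_0(q,p_1,\ldots,p_n)=q$ and $L_i(q,p_1,\ldots,p_n)=qx_i-p_i$ (for $1\le i\le n$) on $\RR^{n+1}$. Their coefficient matrix is lower triangular with $\pm 1$ on the diagonal, so the associated linear change of variables has unit Jacobian and preserves Lebesgue measure.

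I would then fix a small $\varepsilon\in(0,1)$ and work with the open, symmetric, convex body
\[
K_{\varepsilon}=\{(q,p_1,\ldots,p_n)\in\RR^{n+1}:|q|<N+\varepsilon,\ |qx_i-p_i|<N^{-1/n}\text{ for every }i\}.
\]
A direct computation gives $\Vol(K_{\varepsilon})=2(N+\varepsilon)\cdot(2N^{-1/n})^n=2^{n+1}(1+\varepsilon/N)>2^{n+1}$. Minkowski's first theorem then furnishes a nonzero integer point $(q,p_1,\ldots,p_n)\in K_{\varepsilon}\cap\ZZ^{n+1}$. Since $q$ is an integer and $\varepsilon<1$, the condition $|q|<N+\varepsilon$ forces $|q|\le N$. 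The case $q=0$ is excluded, for it would force $|p_i|<N^{-1/n}\le 1$ and hence every $p_i=0$, contradicting non-triviality. After replacing the vector by its negative if necessary, one has $1\le q\le N$, and the remaining estimates $\langle qx_i\rangle\le|qx_i-p_i|<N^{-1/n}$ give (\ref{h7.0}).

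The only subtle point is obtaining the strict inequality $\langle q\vx\rangle<N^{-1/n}$. The natural closed body $\{|q|\le N,\ |qx_i-p_i|\le N^{-1/n}\}$ has volume exactly $2^{n+1}$, sitting precisely on the Minkowski threshold and yielding only non-strict bounds in both coordinates. Inflating the $q$-range by $\varepsilon$ pushes the volume strictly past $2^{n+1}$, while the integrality of $q$ absorbs the extra slack and does not enlarge the admissible range $\{|q|\le N\}$. A pure pigeonhole proof (partitioning $[0,1)^n$ into small cubes) is appealing but runs into trouble when $N^{1/n}$ is not an integer, producing either a weaker constant or a value of $q$ that can exceed $N$; the Minkowski route above appears to be the cleanest way to achieve the sharp statement.
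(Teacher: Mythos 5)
Your proof is correct. Note that the paper never actually proves Theorem \ref{Dir}; it is cited as the classical foundational result, while the appendix proves the more general Dirichlet-type Theorem~\ref{t-i-d} by the same route (Minkowski's first convex body theorem applied to a closed body of volume exactly $2^{\dim V}$, yielding non-strict inequalities). Your argument is the standard Minkowski proof, and the $\varepsilon$-inflation trick is a clean way to get the strict inequality $\langle q\vx\rangle < N^{-1/n}$ that the closed-body boundary case would not give directly: you widen the body only in the $q$-direction, push the volume strictly past $2^{n+1}$ so the open form of Minkowski applies, and then use integrality of $q$ together with $\varepsilon<1$ to recover $|q|\le N$ without loss. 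Your exclusion of $q=0$ (via $N^{-1/n}\le 1$) and the final sign normalisation are also handled correctly, so the proof is complete.
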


  An important consequence of Dirichlet's theorem
is the following statement.

\begin{cor}[Dirichlet]\label{CorDir}
 For any $\vx \in \RR^n$ there exist infinitely many $q \in \mathbb{N}$ such that
  \begin{equation}\label{corh7.0}
   \langle  q \vx\rangle  < q^{-\frac1n}    \, .
  \end{equation}
\end{cor}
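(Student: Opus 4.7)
The plan is to deduce the corollary from Theorem \ref{Dir} by a standard exhaustion argument, treating rational and irrational $\vx$ separately.

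First I would dispose of the easy case $\vx \in \QQ^n$. Writing $\vx = \bp/d$ with $\bp \in \ZZ^n$ and $d \in \NN$, every $q$ of the form $kd$ with $k \in \NN$ satisfies $\langle q\vx\rangle = 0 < q^{-1/n}$, yielding infinitely many solutions for free. Assume henceforth that $\vx \notin \QQ^n$, so that $\langle q\vx\rangle > 0$ for every $q \in \ZZ\setminus\{0\}$; this positivity is the only ingredient that fails in the rational case, which is why splitting is needed.

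Next, for each $N \in \NN$, Theorem \ref{Dir} supplies a $q_N \in \ZZ$ with $1 \le q_N \le N$ and $\langle q_N\vx\rangle < N^{-1/n}$. Since $q_N \le N$ forces $N^{-1/n} \le q_N^{-1/n}$, this $q_N$ already solves \eqref{corh7.0}. To promote a single solution to infinitely many, I would suppose for contradiction that the set $S := \{q \in \NN : \langle q\vx\rangle < q^{-1/n}\}$ is finite, set $\delta := \min_{q \in S}\langle q\vx\rangle$, which is strictly positive by the irrationality hypothesis, and apply Theorem \ref{Dir} with $N$ large enough that $N^{-1/n} < \delta$; the resulting $q_N$ then satisfies $\langle q_N\vx\rangle < \delta$, placing it outside $S$ and contradicting the supposed exhaustivity. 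Since $\langle \cdot \rangle$ is even in its argument, passing from infinitely many positive $q$ to infinitely many integer $q$ is immediate.

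There is no genuine obstacle in this deduction: the only delicate point is ensuring $\delta > 0$, which is exactly why the rational case is handled first. The overall structure is the classical passage from Dirichlet's uniform inequality to its infinitary consequence, with the parameter $N$ in Theorem \ref{Dir} tuned progressively to exceed whatever finite list of solutions one might hope to get away with.
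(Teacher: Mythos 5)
Your proof is correct and is the standard textbook deduction of the infinitary corollary from Dirichlet's theorem; the case split on whether $\vx \in \QQ^n$ (to ensure $\delta>0$) and the exhaustion-by-contradiction with $N$ chosen so that $N^{-1/n}<\delta$ are exactly the classical argument. The paper states Corollary~\ref{CorDir} without proof, treating it as a well-known consequence, so there is no paper proof to compare against.
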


\noindent  The above foundational theorem from the theory  of simultaneous Diophantine approximation
prompts the following natural question:

\noindent\textbf{Question I.} Can Dirichlet's theorem be improved?

\medskip

\noindent  The following notions help make the question more precise.  Following Davenport $\&$ Schmidt \cite{DavSchI},  for a particular $\vx \in \RR^n$ we say that improvement in Dirichlet's theorem is possible if there exists a constant $\eps \in (0,1)$ such that, for all  $N > N_0( \bx, \eps)$   sufficiently large there exists $q \in \mathbb{Z}$ such that
  \begin{equation}\label{DIsv1}
  \langle q \vx\rangle  < \eps N^{-\frac1n} \qquad { and }  \qquad  1 \le  q \le N \, .
  \end{equation}
For obvious reasons such an $\vx$ is referred to as  \emph{Dirichlet improvable} and we let $\DI_n$ denote the set of Dirichlet improvable points in $\R^n$.  Furthermore, we say that  $\vx \in \R^n$  is \emph{singular} if it is Dirichlet improvable with $\eps> 0$ arbitrarily small; that is, for  any  $\eps> 0$ for all sufficiently
large $ N $ there exists $q \in \mathbb{Z}$  satisfying \eqref{DIsv1}.  We let $\Sing_n$ denote the set of singular points in  $\R^n$. By definition, we  clearly  have that
$$
\Sing_n \subseteq \DI_n    \,
$$
 and it is easily verified   that $\Sing_n$ contains every rational hyperplane in $\R^n$. Thus
$$
n-1 \le  \dim \Sing_n \le n   \, .
$$
Here and throughout,  $\dim X$ will denote the Hausdorff dimension of a  subset $X$ of  $\R^n$.  In the case $n=1$, a nifty argument due to Khintchine \cite{Khin1} dating back to the twenties shows that a real number is singular if and only if it is rational; that is
\begin{equation} \label{kh1}\Sing_1 = \Q \, .
 \end{equation}
Recently, Cheung $\&$ Chevallier \cite{ChCh}, building on the spectacular $n=2$ work of Cheung \cite{Ch}, have shown that
$$
 \dim \Sing_n =  \frac{n^2}{n+1}     \qquad {\text{for all }n \ge 2} \, .
$$
Note that since $ \frac{n^2}{n+1} > n-1 $,  this immediately implies that in higher dimensions $ \Sing_n $ does not simply correspond to rationally dependent $\vx \in \R^n$ as in the one-dimensional case -- the theory is much richer.

In \cite{DavSchI}, Davenport $\&$ Schmidt established various results concerning the set $\DI_n$ of Dirichlet improvable points and the set $\Bad_n$ of simultaneously  badly approximable points.  In particular they showed (see  \cite[Theorem 2]{DavSchI}) that
\begin{equation} \label{ds1}
 \Bad_n  \ \subseteq  \  \DI_n \, .
\end{equation}
 Recall,  $\vx \in \R^n$ is said to be {\em badly approximable} if there
exists a constant  $\eps=\eps(\bx)  \in (0,1)$ so that
  \begin{equation}\label{badsv1}
  \langle q \vx\rangle  > \eps q^{-\frac1n} \qquad { \forall}  \quad  q \in \mathbb{N}\, .
  \end{equation}
In other words,  $\Bad_n$  corresponds to those $\vx \in \R^n$ for which the right hand side of the inequality appearing in Dirichlet's corollary, namely \eqref{corh7.0}, cannot be improved by  $\eps > 0 $ arbitrarily small.  By definition, we clearly have that $\Bad_n \cap \Sing_n = \emptyset$.  It is worth mentioning the well known fact that  $\Bad_n$ is a set of $n$-dimensional Lebesgue  measure zero but of full dimension; i.e.
$$
\dim \Bad_n = n \, .
$$
In view of  \eqref{ds1} it thus  follows that
$$
\dim \DI_n = n \, .
$$
In a follow-up paper \cite{DavSch}, Davenport $\&$ Schmidt   showed that $\DI_n $ is a set of $n$-dimensional Lebesgue measure zero and thus in terms of measure and dimension it has the same properties as the set  $\Bad_n$. In the case $n=1$, much more is true: any irrational $x \in \R$ is Dirichlet improvable if and only if it is badly approximable.  This for example follows directly from \cite[Theorem 1]{DavSchI} and together with  \eqref{kh1} implies that
\begin{equation}\label{ohyes1}
   \DI_1  \  =   \  \Bad_1  \ \cup \  \Sing_1  \, .
\end{equation}
{Thus,} in view of \eqref{ohyes1} we have a complete characterisation  in dimension one. In higher dimensions, we know that
$$
\DI_n  \  \ \supseteq \  \Bad_n  \ \cup \  \Sing_n  \,
$$
but surprisingly it seems unknown whether or not equality is possible.  In other words, the answer to the following basic problem seems unknown. As far as we are aware, it first appeared in print in Fabian S\"{u}ess' beautifully written PhD thesis \cite[Section~4.1]{FSthesis}.

\begin{problem}\label{prob-sanju}
 Is the set $ \ \DI_n\setminus (\Bad_n\cup \Sing_n)$ empty when $n\ge 2$?
\end{problem}

\noindent The {key goal} of this paper is to show that it is not.  Maybe it is ``folklore'' that in higher dimensions there exist Dirichlet improvable points that are neither badly approximable nor singular. However,  we would like to stress that we are unaware of any such  a statement.

\begin{thm} \label{thslv1}  For $n \ge 2$,  the set
$$
\FS_n := \DI_n  \setminus  (\Bad_n  \ \cup \  \Sing_n )
$$
has continuum many points.
\end{thm}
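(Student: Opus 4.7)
The target is to produce uncountably many vectors $\vx\in\RR^n$ ($n\ge 2$) which satisfy three properties simultaneously: (a) Dirichlet improvability with some constant $\varepsilon_0<1$; (b) $\liminf_{q\to\infty}\langle q\vx\rangle\cdot q^{1/n}=0$, so that $\vx\notin\Bad_n$; and (c) $\min_{1\le q\le N_k}\langle q\vx\rangle\ge\eta\,N_k^{-1/n}$ for some fixed $\eta>0$ along some sequence $N_k\to\infty$, so that $\vx\notin\Sing_n$. My plan is a Cantor-type construction. Fix $0<\eta<\varepsilon_0<1$ and a rapidly growing sequence of positive integers $q_1\ll N_1\ll q_2\ll N_2\ll\cdots$, and build nested closed boxes $B_1\supset B_2\supset\cdots$ in $\RR^n$ with vanishing diameters so that every point in $\bigcap_k B_k$ satisfies all of (a), (b), (c).

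The initial box $B_0$ is placed inside a Cantor-like subset of $\Bad_n$, so that the seed satisfies (a) by the Davenport--Schmidt inclusion $\Bad_n\subseteq\DI_n$. At the odd stages $2k-1$, I would shrink the current box into a thin tubular neighbourhood of a rational hyperplane $\vx\approx\bp/q_k$ to force $\langle q_k\vx\rangle<q_k^{-1/n-1/k}$ for the prescribed large $q_k$, securing (b). At the even stages $2k$, I would delete from the current box the union of the open sets $\{\vx:\langle q\vx\rangle<\eta\,N_k^{-1/n}\}$ taken over all $1\le q\le N_k$; a standard volume count, bounding the total removed volume in $[0,1]^n$ by roughly $(4\eta)^n$, shows that a sub-box survives whenever $\eta$ is sufficiently small. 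The compatibility of (b) and (c) at the common denominator $q_k$ forces the growth constraint $N_k\gg q_k^{1+n/k}$. Since at each stage one can keep at least two disjoint admissible sub-boxes, the construction branches into a Cantor-type subset of $\FS_n$ of cardinality $2^{\aleph_0}$.

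The delicate part is preserving (a) throughout the perturbation process. For each scale $N\in[q_k,N_k]$ one uses $q_k$ itself, which under the growth condition above already beats $\varepsilon_0 N^{-1/n}$; for $N\in[N_k,q_{k+1}]$ one exploits the fact that a small perturbation of a badly approximable vector continues to admit $\varepsilon_0$-improvements up to a scale determined by the perturbation size, and one calibrates the sequences so that $q_{k+1}$ is reached before the perturbation budget is exhausted.

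The principal obstacle is the simultaneous maintenance of (a) and (c): forbidding every denominator $q\le N_k$ from yielding an $\eta$-good approximation while still requiring some $\varepsilon_0$-good approximation at every scale $N$ constrains the growth rates of $q_k$ and $N_k$ very tightly and demands precise bookkeeping of how the Dirichlet constant deteriorates under each perturbation. The construction is viable only because $n\ge 2$: one needs enough room between consecutive restricted scales $N_k$ for mid-range denominators $q$ to deliver $\langle q\vx\rangle<\varepsilon_0\,N^{-1/n}$ without delivering $\langle q\vx\rangle<\eta\,N^{-1/n}$, a numerology that simply fails when $n=1$, consistently with the identity $\DI_1=\Bad_1\cup\Sing_1$ recorded in \eqref{eq:ohyes1}.
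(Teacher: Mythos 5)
Your plan takes a genuinely different route from the paper: a direct Cantor-type construction in $\RR^n$, versus the paper's detour through the parametric geometry of numbers. The paper's proof (via Theorem~\ref{thm-main}) constructs explicit Roy $(n+1)$--systems $\bP$ on $[0,\infty)$ whose first component satisfies $\liminf_t\bigl(\tfrac{t}{n+1}-P_1(t)\bigr)=\gamma>0$ (which gives Dirichlet improvability with $\eps=\e^{-(n+1)\gamma+C_n}$ and excludes singularity) and $\limsup_t\bigl(\tfrac{t}{n+1}-P_1(t)\bigr)=\infty$ (which excludes $\Bad_n$), and then invokes Roy's correspondence (Theorem~\ref{thm-roy}) to realise each system by a point $\bx\in\RR^n$ to within a universal additive error. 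The reformulation in Lemma~\ref{DS} makes all three membership conditions into clean sandwich constraints on the single function $t\mapsto \tfrac{t}{n+1}-L_{\bx,1}(t)$, where they can be engineered explicitly. Your approach tries to do the engineering directly in the original coordinates, which is reasonable in spirit but much harder to control.

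The gap is precisely the step you yourself flag as delicate: preserving $\DI_n(\eps_0)$ across the odd stages. After forcing $\langle q_k\vx\rangle<q_k^{-1/n-1/k}$, the box $B_{2k-1}$ sits inside a ball of radius $\ell_k\approx q_k^{-1-1/n-1/k}$ around $\bp_k/q_k$. The denominator $q_k$ covers scales $N$ only up to about $\eps_0^n q_k^{1+n/k}$, so for $N$ beyond that you must produce a fresh witness. Your fallback is ``a small perturbation of a badly approximable vector continues to admit $\eps_0$-improvements up to a scale determined by the perturbation size.'' But this cannot close the gap: any $\vy\in\Bad_n$ lying in $B_{2k-1}$ must have $\langle q_k\vy\rangle\ge c q_k^{-1/n}$, hence $\|\vy-\bp_k/q_k\|\gtrsim c\,q_k^{-1-1/n}$, while $\ell_k\ll q_k^{-1-1/n}$; so there is essentially no room in $B_{2k-1}$ for a $\Bad_n$ anchor whose constant $c$ (and thus whose Dirichlet constant $\eps'$) is bounded away from the critical value. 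Moreover, even granting an anchor $\vy$ with some usable $\eps'<\eps_0$, a distance-$d$ perturbation only guarantees $\DI_n(\eps_0)$ up to scale $N\lesssim\bigl((\eps_0-\eps')/d\bigr)^{n/(n+1)}$; with $d=\ell_k$ this ceiling is $\asymp q_k^{1+n/(k(n+1))}$, which is strictly below the threshold $\eps_0^n q_k^{1+n/k}$ where coverage by $q_k$ already runs out. So the two mechanisms you invoke fail to overlap, leaving a range of scales $N$ with no exhibited witness for \eqref{DIsv1}. You would need a third mechanism here --- and identifying it is the whole content of why the theorem holds for $n\ge 2$ but not $n=1$. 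Your even-stage volume estimate is also asserted at the scale of $[0,1]^n$ rather than at the scale of the tiny box $B_{2k-1}$, where the union $\bigcup_{q\le N_k}\{\vx:\langle q\vx\rangle<\eta N_k^{-1/n}\}$ can in principle swallow the whole box unless $q_k$, $N_k$, $\eta$ and the placement of $B_{2k-1}$ are correlated carefully; this needs to be proved, not asserted.

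In short: the skeleton (Cantor construction, alternating ``well-approximable'' and ``non-singular'' stages, keeping two sub-boxes for uncountability) is the right shape, and your identification of the tension between (a) and (c) is exactly on target, but the proposal stops at the point where the real work begins. The paper avoids the scale-by-scale bookkeeping by passing to Roy systems, where the analogous bookkeeping reduces to designing a piecewise-linear function subject to slope constraints --- a problem that can be solved in closed form (see the building blocks of \S\ref{Blocks} and Lemma~\ref{PropBlock}).
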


%\noindent Maybe it is ``folklore'' that in higher dimensions there exist Dirichlet improvable points that are neither badly approximable nor singular; that is,  $\FS_n \neq \emptyset $ for  $ n \ge 2$.  However,  we would like to stress that we are unaware of any such a statement.

We suspect  that our theorem is far from the truth. Indeed,  it may well be the case  that for $ n \ge 2$
$$
\dim \FS_n = n  \, .
$$
As we shall see in the next section, we actually prove a more general and effective version of Theorem \ref{thslv1}.  Unfortunately, it sheds no light on the dimension of $ \FS_n$.

%\medskip

\begin{remark} Following the appearance of the first pre-print version of this paper on the arXiv, Nikolay Moshchevitin kindly pointed out the work of Akhunzhanov and Shatskov \cite{AS}.  For $n=2$, they compute the Dirichlet's spectrum for simultaneous approximation by rational points with respect to the Euclidean norm. In short, their method uses the theory of best approximations and can be adapted to construct Dirichlet improvable points in $\R^2$ that are not simultaneously singular or badly approximable.
\end{remark}

\subsection{The setup, further background and main results}
\label{setupmain}

Recall that from the classical point of view there are two forms of Diophantine approximation in $\R^n$; one corresponding to  (simultaneous) approximation by rational points as considered
in the previous section and the other corresponding to (dual) approximation by rational hyperplanes.   Concerning the latter, the dual version of Dirichlet's theorem states that  for any $\bx \in \R^n$ and
  $N \in \mathbb{N}$ there exists $\bq \in \mathbb{Z}^n{\setminus}\{0\}$
   such that
  \begin{equation}\label{slv-dual}
\langle \bq \cdot \bx \rangle \le  N^{-n} \qquad { and }  \qquad \| \bq \| \le N . \end{equation}
Here and elsewhere  $\bq \cdot \bx := q_1x_1 + \dots +q_nx_n $ is the standard inner product and $\| \bq \|:=\max \{|q_1|, \ldots, |q_n|\}$ is the supremum norm of $\bq$.  In \cite[Theorem~2]{DavSch}, Davenport $\&$ Schmidt proved that the dual form of Dirichlet's theorem is improvable if and only if the simultaneous form of Dirichlet's theorem (Theorem \ref{Dir}) is improvable. So it follows that $\bx\in\DI_n$ if and only if there exists $\eps\in (0,1)$ such that, for all $N > N_0( \bx, \eps)$ sufficiently large there exists $\bq \in \mathbb{Z}^n{\setminus}\{0\}$ such that
\begin{equation}\label{equ-basic}
\langle \bq \cdot \bx \rangle \le  \eps N^{-n} \qquad { and }  \qquad \| \bq \| \le N .
\end{equation}
Indeed, the same transference between the simultaneous and dual forms of approximation is true when considering the set of singular points $\Sing_n$.  However, this dual versus simultaneous equivalence for singular points  (and indeed badly approximable points) holds in a much wider context.  This we now describe since it will be the setting of our main  result.

Let $d$ be integer satisfying $0 \le d \le n-1$.
The setup we now consider is one in which we approximate points $ \bx \in \R^n$ by
$d$-dimensional rational affine subspaces $L \subset \R^n$.
%With this in mind, we let
%$$d(\x,L):=\min_{\by \in L} \, d(\bx,\by)$$
%denote the minimal distance between $\bx$ and points $\by$ of $L$.
With this in mind, we let
\begin{equation}   \label{normdist}
d(\x,L):=\min_{\by \in L} \, \|\bx-\by\| =  \min_{\by \in L} \, \max_{1 \le i \le n} |x_i-y_i|  \,
\end{equation}
denote the minimal distance between $\bx$ and  $L$.
We also let $H(L)$ denote the height of $L$.
In short, $H(L)$ is the volume of the  sub-lattice  $\Z^{n+1} \cap L_0$ where  $L_0$ is  the unique $(d+1)$-dimensional subspace  of $\R^{n+1}$ containing the $d$-dimensional embedding $\{(y_1, \ldots, y_n, 1): \by \in L \}$ of $L$ into $\R^{n+1}$.   This notion of height is relatively standard and is usually referred to as the projective or Weil height of $L$ -- see  \cite{Famous5, MLwd, SchH} for more details.
Note that when $d=0$,  $L$ corresponds to a  rational point $\frac{\bp}{q}:= \big(\frac{p_1}{q}, \ldots \frac{p_n}{q} \big)$ for some $(\bp, q) \in \mathbb{Z}^n \times \mathbb{Z} \setminus \{0\}$.  In turn, we have that
$$
H(L)  \, \asymp \,  \max \{\|\bp\| ,  |q|  \}     \qquad {\rm and }  \qquad d(\x,L)  \, = \,      \max_{1 \le i \le n} \frac{\left|q x_i - p_i  \right|}{|q|}  \, .
$$
Also note that  when $d=n-1$,  $L$ corresponds to a  rational affine hyperplane $\{ \by \in \R^n: \bq \cdot \by = p  \}  $ for some $(\bq,p) \in \mathbb{Z}^n{\setminus}\{0\} \times \mathbb{Z}$.  In turn,  we have that
$$
 H(L)  \, \asymp \,   \max \{ |p|, \|\bq\| \}   \qquad {\rm and }  \qquad d(\x,L)  \, \asymp \,   \frac{ \left| q_1x_1 + \ldots + q_nx_n -p \right| }{\|\bq\|}  \, .
$$
To simplify notation the  symbols $\ll$
and $\gg$ will be used to indicate an inequality with an
unspecified positive multiplicative constant. If $a \ll b$ and $a
\gg b$ we write $ a \asymp b $, and  say that the quantities $a$
and $b$ are \emph{comparable}.  In the above,  the implied `comparability' constants are dependent on $n$.  Thus, up to some multiplicative constants,   the extreme cases $d=0$ and $ d=n-1$  correspond to the standard simultaneous and dual forms of Diophantine approximation.
We  now consider the natural  analogues of the sets $ \Bad_n $ and  $ \Sing_n $  introduced within the framework of simultaneous Diophantine approximation  in  \S\ref{BM}.  With this in mind, we start by stating a Dirichlet type  theorem for approximation by $d$-dimensional rational subspaces.   Throughout, given $ n \in \N$ and  $ d \in \{0,1, \ldots, n-1\} $,  we let
$$
\omega_{d}:=\frac{d+1}{n-d}  \, .
$$
%Also,  $c_n$  will denote an absolutely positive constant that depends only on the dimension~$n$

\begin{thm} \label{Dir-d} Let $n \in \N$ and $d$ be integer satisfying $0 \le d \le n-1$.  Then
 for any $\vx \in \RR^n$  there exists a constant $ c=c(n,d,\bx) > 0$,  such that for any  $N \in \mathbb{N}$ there exists a $d$-dimensional rational affine subspace $L \subset \R^n$,  such that
  \begin{equation}\label{slv-d}
d(\vx,L) \, \leq \, c \,  H(L)^{-1}  N^{-\omega_d} \qquad { and }  \qquad H(L)\leq N . \end{equation}
\end{thm}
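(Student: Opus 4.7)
The plan is to construct $L$ as the common null set of $n-d$ integer linear equations obtained from Minkowski's second theorem. Embed via $\vx \mapsto (\vx, 1) \in \R^{n+1}$: a $d$-dimensional rational affine subspace $L \subset \R^n$ corresponds to the $(d+1)$-dimensional rational linear subspace $L_0 \subset \R^{n+1}$ spanned by $\{(\by, 1): \by \in L\}$, with $d(\vx, L) = d((\vx, 1), L_0)$ and $H(L) = \operatorname{covol}(L_0 \cap \Z^{n+1})$. Consider the unimodular lattice
\[
\Lambda_\vx := u(\vx)\Z^{n+1}, \qquad u(\vx)(\bq, p) := (\bq,\ p - \bq\cdot\vx),
\]
so that short vectors in $\Lambda_\vx$ correspond to pairs $(\bq, p) \in \Z^{n+1}$ with both $\|\bq\|$ and $|\bq\cdot\vx - p|$ small. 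For the given $N$, set $Q := N^{1/(n-d)}$ and $\eta := N^{-n/(n-d)}$, and take the symmetric convex body $K_N := \{(\bu, v): \|\bu\|_\infty \leq Q,\ |v| \leq \eta\}$, which has volume $2^{n+1}$; the identities $Q^{n-d} = N$ and $\eta\,Q^{n-d-1} = N^{-\omega_d}$ will drive what follows.

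By Minkowski's second theorem, there exist linearly independent $\bw_1, \ldots, \bw_{n+1} \in \Lambda_\vx$ with $\bw_j \in \lambda_j K_N$, where $\lambda_1 \leq \cdots \leq \lambda_{n+1}$ are the successive minima of $K_N$ with respect to $\Lambda_\vx$, and $\prod_j \lambda_j \asymp 1$. Writing $\bw_j = u(\vx)(\bq_j, p_j)$, set $\bv_j := (\bq_j, -p_j) \in \Z^{n+1}$ for $j = 1, \ldots, n-d$; these are linearly independent, with $\|\bq_j\|_\infty \leq \lambda_j Q$ and $|\bq_j\cdot\vx - p_j| \leq \lambda_j \eta$. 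Define
\[
L := \{\by \in \R^n: \bq_j\cdot\by = p_j,\ j = 1, \ldots, n-d\},
\]
a rational $d$-dimensional affine subspace whose associated $(d+1)$-dimensional linear subspace $L_0 \subset \R^{n+1}$ is the orthogonal complement of $V := \Span(\bv_j)$.

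For the estimates, reduce first to $\vx \in [0,1]^n$ (harmlessly, since translating $\vx$ by integer vectors translates $L$ by the same and preserves height). Let $m \geq 1$ be the index of $\Z\langle\bv_j\rangle$ in $V \cap \Z^{n+1}$. The duality between primitive sublattices and their orthogonal complements gives $\|\bv_1 \wedge \cdots \wedge \bv_{n-d}\| = m\,H(L)$, so Hadamard's inequality together with $\|\bv_j\| \leq C_n \lambda_j Q$ yields
\[
H(L) \leq C_n \Big(\prod_{j \leq n-d}\lambda_j\Big) N.
\]
For the distance, use $d(\vx, L)\,H(L) = \|(\vx, 1) \wedge B\|$ where $B$ is the primitive Plücker vector of $L_0$, and the Hodge identity $\star B = \pm(\bv_1 \wedge \cdots \wedge \bv_{n-d})/m$; expanding the interior product one obtains
\[
m\,d(\vx, L)\,H(L) \leq \sum_{j=1}^{n-d} |\bq_j\cdot\vx - p_j|\prod_{i \neq j}\|\bv_i\| \leq C_n\,\eta\,Q^{n-d-1}\prod_{j \leq n-d}\lambda_j,
\]
whence $d(\vx, L)\,H(L) \leq C_n \big(\prod_{j \leq n-d}\lambda_j\big) N^{-\omega_d}$.

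The main subtlety, and the hardest step, is bounding the sub-product $\prod_{j \leq n-d}\lambda_j$ by a constant depending only on $n$: Minkowski's second theorem delivers only the full product $\prod_{j \leq n+1}\lambda_j \asymp 1$, which does not a priori control any partial product. A short case distinction handles this: if $\lambda_{n-d+1} \geq 1$, then $\prod_{j \leq n-d}\lambda_j \leq (\prod_{j \leq n+1}\lambda_j)/\lambda_{n-d+1}^{d+1} \leq C_n$; otherwise $\lambda_{n-d} \leq \lambda_{n-d+1} < 1$ forces $\prod_{j \leq n-d}\lambda_j < 1$ directly. Substituting into the displayed bounds gives $H(L) \leq C\,N$ and $d(\vx, L)\,H(L) \leq C\,N^{-\omega_d}$ for a constant $C = C(n)$; finally, applying the construction to $\lceil N/C\rceil$ in place of $N$ absorbs the constant on the height side and yields the theorem with $c = c(\vx, n)$.
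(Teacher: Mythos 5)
Your approach is genuinely different from the paper's. The paper proves Theorem~\ref{Dir-d} via Theorem~\ref{t-i-d}: it applies Minkowski's \emph{first} convex body theorem in $\wedge^d\RR^n\oplus\wedge^{d+1}\RR^n$ to produce a pair $(\bZ,\bY)$, forms the generally \emph{non-decomposable} multivector $\bX=\be_{n+1}\wedge\bZ-\bY\in\wedge^{d+1}\ZZ^{n+1}$, and then trades it for a decomposable one via Mahler's compound convex body theory (Lemma~\ref{l-d-type}, invoking the arguments of \cite[Section~4]{BugLau2}). You instead apply Minkowski's \emph{second} theorem directly in $\RR^{n+1}$, wedge the first $n-d$ minima vectors so that decomposability is automatic, and then control $\prod_{j\le n-d}\lambda_j$ by a short case split rather than by comparing to the whole product. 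That last observation is a nice elementary substitute for the compound body machinery, and the Hodge-star/interior-product estimate for the distance is correct (your bound $m\,d(\vx,L)H(L)\le\cdots$ is fine after reducing to $\vx\in[0,1]^n$ so that $|\bx'|\asymp 1$). The scaling reduction from $H(L)\le CN$ to $H(L)\le N$ should use $\lfloor N/C\rfloor$ and handle $N\le 2C$ by hand, but this is cosmetic.

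There is one genuine gap: you assert without proof that the constructed set $L=\{\by:\bq_j\cdot\by=p_j,\ j\le n-d\}$ is a $d$-dimensional rational affine subspace. Linear independence of the lattice vectors $\bw_1,\ldots,\bw_{n-d}\in\Lambda_\vx$ (equivalently of $(\bq_j,p_j)\in\ZZ^{n+1}$) does \emph{not} imply linear independence of the projections $\bq_j\in\RR^n$. If the $\bq_j$ are dependent with $\sum c_j\bq_j=0$ but $\sum c_jp_j\neq 0$, then $\be_{n+1}\in V=\Span(\bv_j)$, the system is inconsistent, and $L=\emptyset$; your later algebra ($L_0=V^\perp$, $\star B=\pm(\bv_1\wedge\cdots\wedge\bv_{n-d})/m$, etc.) is vacuous. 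The gap is fillable, but one must say why: if $\be_{n+1}\in V$ then all $(n-d)\times(n-d)$ minors of the matrix $(\bq_j)$ vanish, so every nonzero Pl\"ucker coordinate of $\bv_1\wedge\cdots\wedge\bv_{n-d}$ involves the last column; writing the last column of $(\bw_j)$ as the last column of $(\bq_j,p_j)$ minus an $\vx$-combination of the others shows such a coordinate is an unchanged nonzero integer yet has absolute value at most $(n-d)!\,\eta\,Q^{n-d-1}\prod_{j\le n-d}\lambda_j\le (n-d)!\,C_n\,N^{-\omega_d}$, forcing $N$ bounded in terms of $n$. For those finitely many $N$ one picks any fixed $L$ and inflates $c$. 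Without an argument of this kind the proof, as written, does not establish that the object you produce is an admissible $L$.
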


\noindent The above statement  is a consequence of standard tools from the geometry of numbers such as  Minkowski's  second convex body theorem and Mahler's theory for compound bodies.  For completeness,  the details are given in \S\ref{App} (see Proposition~\ref{propa1} in \S\ref{DTTMA} and Proposition~\ref{lemB3A} in \S\ref{Sec_A2.1}).   In turn, the theorem gives rise to the following statement.

\begin{cor}\label{CorDir-d}
 Let $n \in \N$ and $d$ be integer satisfying $0 \le d \le n-1$.  Then
 for any $\vx \in \RR^n$ with at least $(d+1)$-rationally independent coordinates, there exists a constant $ c=c(n,d,\bx) > 0$  and  infinitely many  $d$-dimensional rational affine subspaces $L \subset \R^n$  such that
  \begin{equation}\label{slv-cd}
d(\vx,L) \, \leq \, c \, H(L)^{-1-\omega_d} \,  . \end{equation}
\end{cor}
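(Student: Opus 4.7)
The plan is to derive the corollary directly from Theorem \ref{Dir-d} by applying it for each integer $N$ and showing that the resulting approximating subspaces $L_N$ cannot eventually stabilise — the rational independence hypothesis is precisely what rules out stabilisation.

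First, for each $N \in \NN$, Theorem \ref{Dir-d} produces a $d$-dimensional rational affine subspace $L_N \subset \RR^n$ with
$$
H(L_N) \leq N \qquad \text{and} \qquad d(\vx, L_N) \leq c\, H(L_N)^{-1}\, N^{-\omega_d}.
$$
Since $\omega_d > 0$ and $H(L_N) \leq N$, one has $N^{-\omega_d} \leq H(L_N)^{-\omega_d}$, and substituting this into the above immediately gives the target bound
$$
d(\vx, L_N) \leq c\, H(L_N)^{-1-\omega_d}.
$$
So every $L_N$ already satisfies \equ{slv-cd}; the remaining task is to show that the family $\{L_N : N \in \NN\}$ contains infinitely many distinct subspaces.

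Suppose, for contradiction, that only finitely many distinct subspaces occur. Then some fixed $L^*$ equals $L_N$ for arbitrarily large $N$. For such $N$ the inequality $d(\vx, L^*) \leq c\, H(L^*)^{-1}\, N^{-\omega_d}$ holds, and letting $N \to \infty$ forces $d(\vx, L^*) = 0$, that is, $\vx \in L^*$. The rational independence hypothesis now delivers a contradiction: if $\vx$ has $d+1$ coordinates $x_{i_0}, \dots, x_{i_d}$ such that $1, x_{i_0}, \dots, x_{i_d}$ are $\QQ$-linearly independent, then $\vx$ cannot lie in any $d$-dimensional rational affine subspace of $\RR^n$. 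To see this, project $L^*$ onto the coordinates indexed by $i_0, \dots, i_d$; the image is a rational affine subspace of $\RR^{d+1}$ of dimension at most $d$, hence contained in a rational affine hyperplane of $\RR^{d+1}$. Evaluating this hyperplane equation at $\pi(\vx) = (x_{i_0}, \dots, x_{i_d})$ yields a nontrivial $\QQ$-linear relation among $1, x_{i_0}, \dots, x_{i_d}$, contradicting the hypothesis.

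The only substantive step is the last linear-algebraic observation connecting "$(d+1)$-rationally independent coordinates" to non-containment in a $d$-dimensional rational affine subspace; everything else is a mechanical bootstrap from Theorem \ref{Dir-d} using the monotonicity inequality $H(L_N) \leq N$.
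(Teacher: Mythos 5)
Your proof is correct and complete. The paper does not spell out a proof of Corollary~\ref{CorDir-d} (it only remarks that the theorem ``gives rise to'' the corollary), but your argument is exactly the standard deduction the authors clearly have in mind. The two essential steps are both sound: (i) from $H(L_N)\le N$ and $\omega_d>0$ one gets $N^{-\omega_d}\le H(L_N)^{-\omega_d}$, so each $L_N$ produced by Theorem~\ref{Dir-d} already satisfies~\eqref{slv-cd} with the same constant $c=c(\bx,n)$; and (ii) if only finitely many distinct $L_N$ arose, some $L^*$ would recur for arbitrarily large $N$, forcing $d(\bx,L^*)=0$, which contradicts the independence hypothesis. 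Your projection argument for (ii) — that containment of $\bx$ in a $d$-dimensional rational affine subspace forces, after projecting onto any $d+1$ chosen coordinates, a nontrivial rational affine relation among $1,x_{i_0},\dots,x_{i_d}$ — is the right reading of ``at least $(d+1)$-rationally independent coordinates'': it is equivalent to saying $\dim_{\QQ}\mathrm{span}\{1,x_1,\dots,x_n\}\ge d+2$, which is precisely the condition that $\bx$ lies in no $d$-dimensional rational affine subspace. Nothing to correct.
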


\bigskip

\begin{remark}  Proposition~\ref{lemB3A} in \S\ref{Sec_A2.1} together with the remark immediately proceeding it, enables us to explicitly compute the constant $c=c(n,d,\bx)$ appearing in  the above results.   Observe that if we restrict $\bx$ to a bounded subset of $\R^n$, then the constant $c$ can be made to be independent of $\bx$.  In particular, if $\bx \in [0,1]^n$ then in the simultaneous (resp. dual) case we can replace $H(L)$ by $|q|$ (resp. $\|\bq\|$) in the theorem and corollary, and the inequalities corresponding to  \eqref{slv-d}  and \eqref{slv-cd} remain valid if we translate $\bx$ by an integer vector. Thus, up to a constant dependent only on the dimension $n$, Theorem~\ref{Dir-d}  and its corollary coincide with the classical simultaneous and dual forms of Dirichlet theorem and its corollary.
\end{remark}
\medskip

Taking our lead from the classical simultaneous and dual settings, we say that a point
$\bx \in \R^n$ is {\em $d$-singular}  if for any given $\eps \in (0,1) $ and $N > N_0( \bx, \eps, d) $ sufficiently large,  there exists a  $d$-dimensional rational affine subspace $L \subset \R^n$  such that
\begin{equation}\label{equ-gen}
d(\vx,L) \, \leq \, \eps \,  H(L)^{-1}  N^{-\omega_d} \qquad { and }  \qquad H(L)\leq N . \end{equation}
On the other hand, we say that a point
$\bx \in \R^n$ is {\em $d$-badly approximable} if there
exists a constant  $\eps=\eps(\bx) \in (0,1)$ so that
  \begin{equation}\label{slv-bad-d}
  d(\vx,L) \, > \, \eps \,  H(L)^{-1-\omega_d}
  \end{equation}
for all  $d$-dimensional rational affine subspaces $L \subset \R^n$. Finally, we let $\Sing_n^d$ (resp. $\Bad_n^d$)  denote the set of $d$-singular (resp. $d$-badly approximable) points in  $\R^n$.

The following shows that the well known classical equivalence between the simultaneous and dual singular points (and indeed badly approximable points) holds in the general context of approximation by
$d$-dimensional rational affine subspaces. We provide a proof in  \S\ref{sec-reform}.

 \begin{prop}\label{prop-equiv}
 Let $n \in \N$ and $d$ be integer satisfying $0 \le d \le n-1$.  Then,
 $$
 \Sing_n := \Sing_n^0 = \Sing_n^d  \qquad { and} \qquad \Bad_n := \Bad_n^0 = \Bad_n^d \, .
 $$
\end{prop}

\bigskip

\begin{remark} \label{bull} Note that for the purpose of  defining $d$-singular and $d$-badly approximable points it makes no difference whether the minimal distance $  d(\vx,L) $  is defined via the maximum norm (as in \eqref{normdist}) or some other  norm (such as the Euclidean norm).  The point is that these notions are not sensitive to the actual value of the constant $c=c(n,d,\bx)$ appearing in  Theorem~\ref{Dir-d} and its corollary.  Thus, most importantly, the set $\Sing_n^d$ (resp. $\Bad_n^d$)  coincides with the classical simultaneous singular (resp. badly approximable) set when $d=0$ and  the dual singular (resp. badly approximable) set when $d= n-1$. However,  when it comes to defining  the `right' notion of $d$-Dirichlet improvable points it is paramount that $c$ is optimal and that (an appropriate version of) Theorem~\ref{Dir-d} coincides with the classical simultaneous and dual forms of Dirichlet theorem.  Clearly, in its current form it fails to do so. In \ourappendix{} of this paper  we shall propose two versions (algebraic and geometric) of  Theorem~\ref{Dir-d} leading to corresponding notions of Dirichlet improvable points that rectify this issue.
Although it is not particularly relevant within the context of our main result, we hope \ourappendix{} is of independent interest. {In short, we show that within either setting the corresponding  $d$-Dirichlet improvable sets $\DI_n^d$ are all equivalent and coincide with the classical set $\DI_n$; that is, the set of Dirichlet improvable points in $\R^n$ defined via either the classical simultaneous ($d=0$) or dual ($d=n-1$) form of  Dirichlet's theorem (both gives rise to the same set thanks to the aforementioned statement of Davenport $\&$ Schmidt).  It thus follows that the $d$-Dirichlet improvable sets defined via the algebraic and geometric settings also coincide.}
\end{remark}

In order to state our main result, it is convenient to introduce the notion of exponents of Diophantine approximation.

\begin{definition} \label{expo}
Let $d$ be an integer with $0\le d \le n-1$ and let  $ \bx \in \R^n$. We define the \emph{$d^{\,th}$ ordinary exponent} ${\omega}_{d}(\bx)$  (resp. the \emph{$d^{\,th}$ uniform exponent} $\hat{\omega}_{d}(\bx)$) as the supremum of the real numbers $\omega$ for which there exist $d$-dimensional  rational affine subspaces $L \subset \mathbb{R}^{n}$ such that
 $$d(\vx,L) \le H(L)^{-1}  N^{-\omega} \qquad { and }  \qquad H(L)\le N . $$
  for arbitrarily large real numbers $N$  (resp. for every sufficiently large real number $N$).  \\
  \end{definition}

\begin{remark} By definition, whenever  ${\omega}_{d}(\bx)$  is finite,  there exists infinitely many $d$-dimensional rational affine subspaces $L \subset \R^n$  such that
$$d(\vx,L) \le H(L)^{-1-\omega} $$
if $ \omega < {\omega}_{d}(\bx)$,  and if $ \omega > {\omega}_{d}(\bx)$ there are at most finitely many such subspaces $L \subset \R^n$.
\end{remark}

\begin{remark} \label{vsing}
In \cite{VarPrinc} a point $\bx \in \R^n$ satisfying $\hat{\omega}_{0}(\bx)> 1/n$ (equivalently $\hat{\omega}_{n-1}(\bx)> n$) is called very singular and the set of such points is denoted by $\VSing_n$.  In the context of approximation by $d$-dimensional rational affine subspaces, it is natural to define the notion of {\em $d$-very singular points} as points in the set
$$
\VSing_n^d \, := \, \{\bx\in \RR^n : \hat{\omega}_{d}(\bx)  >   \omega_d  \}  \, .
$$
As is the case of badly approximable and singular sets, it turns out  that the sets $\VSing_n^d $  ($0\le d \le n-1$)  are the same -- see Remark~\ref{shproof} below.
%Although not difficult, once armed with Theorem~\ref{GUGD} below, to the best of our knowledge this equivalence does not seem to have been formally written down anywhere and so we include its short proof as Remark~\ref{shproof}.
By definition,  a $d$-very singular point is $d$-singular and since both notions are independent of $d$ we can simply write
$
\VSing_n  \subseteq \Sing_n \,
$.

\end{remark}

Within the classical simultaneous and dual forms of Diophantine  approximation, the above exponents were introduced by Khintchine \cite{Khin1,Khin2} and  Jarn\'ik \cite{JAR}  in the nineteen twenties and thirties.  For $ n \ge 3$, the  intermediate exponents (i.e.,  those corresponding to $ 1 \le d \le n-2 $)   were formally  introduced  by Laurent \cite{MLwd} in 2009 but  had  implicitly been studied by Schmidt \cite{SchH} some fifty years earlier.   Clearly, for any $ \bx \in \R^n$ we have that ${\omega}_{d}(\bx) \ge \hat{\omega}_{d}(\bx)$  and a direct consequence of  Theorem \ref{Dir-d} is that
$$
{\omega}_{d}(\bx) \ge \hat{\omega}_{d}(\bx) \ge \omega_{d}:=\frac{d+1}{n-d}  \, .
$$
Observe that $\omega_{0}= \frac1n$ and $ \omega_{n-1} = n$.  Thus,  when $d=0$ (resp. $d=n-1$) the quantity  $\omega_{d}$  coincides  with the exponent appearing in the classical simultaneous (resp. dual)  form of Dirichlet's theorem.
%\textcolor{red}{ Note that in these cases, Marnat and Moshchevitin \cite{MM} obtained an optimal inequality strengthening the naive ${\omega}_{d}(\bx) \ge \hat{\omega}_{d}(\bx)$.}
Another reasonably straightforward  consequence, this time of the Borel-Cantelli lemma from probability theory,  is that
\begin{equation}\label{svae} {\omega}_{d}(\bx) =  \omega_{d}    \quad   {\rm  for \ almost \ all \ }  \bx \in \R^n \, .\end{equation}

The following elegant transference principle enables us to transfer information
between the ordinary Diophantine exponents ${\omega}_{d}(\bx)$ associated with approximating  points in $ \bx \in \R^n$
by $d$-dimensional rational subspaces of $\R^n$.  It makes sense to include the statement at this point since one of the conditions turns up in the statement of our main theorem.

\begin{thm}[Laurent \& Roy]\label{GUGD}
Let $n\ge2$.   For any point $\bx \in \R^n$ with $1,x_1, \ldots,x_n$ linearly independent over $\Q$,  we have that $ \omega_{0}(\btheta) \ge \omega_{0}$ and
\begin{equation}\label{gugd}  \frac{d \, \omega_{d}(\btheta)}{\omega_{d}(\btheta)+d+1} \,  \le \, \omega_{d-1}(\btheta) \,  \le  \, \frac{(n-d) \, \omega_{d}(\btheta)-1}{n-d+1}  \qquad ( 1\le d \le n-1) \, . \end{equation}
If $\omega_{d}(\bx)=\infty$, the  left hand side in \eqref{gugd} is replace by $d$. Furthermore, given any $\tau_{0}, \ldots, \tau_{n-1} \in [0, \infty]$ with $\tau_{0}\ge \omega_{0}$ and
\begin{equation}\label{gugdsv}    \frac{d \, \tau_{d}}{\tau_{d}+d+1} \le \tau_{d-1} \le \frac{(n-d)\, \tau_{d}-1}{n-d+1}  \qquad ( 1\le d \le n-1) \, ,
\end{equation}
there exists a point $\bx \in \R^n$ with $1,x_1, \ldots,x_n$ linearly independent over $\Q$ such that $ \omega_{d}(\btheta)~=~\tau_{d} $ and $ \hat{\omega}_{d}(\btheta)= \omega_d $ for $0\le d \le n-1$.
\end{thm}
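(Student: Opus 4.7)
The plan is to treat the two halves of the theorem separately: first the chain of transference inequalities \eqref{gugd} for an arbitrary $\bx$ with $1, x_1, \ldots, x_n$ linearly independent over $\Q$, and then the realization statement that every admissible $n$-tuple satisfying \eqref{gugdsv} is actually attained. The initial bound $\omega_0(\bx) \ge \omega_0$ is immediate from Corollary \ref{CorDir-d}.

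For the inequalities, my approach is to recast each $\omega_d(\bx)$ in the language of successive minima. For each $N\ge 1$ consider the convex body
$$\cC_N(\bx) \, := \, \bigl\{(q_0, q_1, \ldots, q_n) \in \R^{n+1} \, : \, |q_0 + q_1 x_1 + \cdots + q_n x_n | \le N^{-n}, \ \max_{1\le i\le n} |q_i| \le N\bigr\}$$
with successive minima $\lambda_1(N) \le \cdots \le \lambda_{n+1}(N)$ relative to $\Z^{n+1}$. Via the Pl\"ucker embedding, a $d$-dimensional rational affine subspace $L \subset \R^n$ of height $H(L)$ close to $\bx$ corresponds to a short vector in the $(d+1)$-th compound body sitting in $\bigwedge^{d+1}\R^{n+1}$. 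Mahler's theorem on compound convex bodies then tells us that, up to constants depending only on $n$, the successive minima of this compound body are the appropriate products of the $\lambda_i(N)$. Passing to the normalized logarithmic exponents
$$\tau_i \, := \, \limsup_{N\to\infty}\, \frac{-\log \lambda_i(N)}{\log N} \qquad (1 \le i \le n+1),$$
one reads off $\omega_d(\bx)$ in terms of the $\tau_i$'s. The inequalities \eqref{gugd} then reduce to elementary algebraic manipulations of the $\tau_i$'s, using Minkowski's second theorem (which controls $\sum\tau_i$) together with the monotonicity $\tau_1 \le \cdots \le \tau_{n+1}$.

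For the realization statement I would carry out an inductive Cantor-style construction in the spirit of Jarn\'ik and Roy. At stage $k$ one has a compact neighborhood $U_k \subset \R^n$ and, for each $d$, a rational affine subspace $L_d^{(k)}\subset\R^n$ of controlled height, placed so that every $\bx \in U_k$ lies close to $L_d^{(k)}$ with the exponent prescribed by $\tau_d$. At stage $k+1$ one shrinks to $U_{k+1}\subset U_k$ and introduces new subspaces $L_d^{(k+1)}$ of much larger height that continue the approximation schedule. The compatibility conditions \eqref{gugdsv} are precisely what guarantee that the heights and distances demanded at different values of $d$ can be simultaneously accommodated inside a single $\bx$; they come from the very Mahler/compound-body estimates used above, now applied in reverse to check consistency. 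To force $\hat\omega_d(\bx) = \omega_d$ one interleaves, between consecutive ``construction'' stages, a long ``neutral'' window during which no better-than-Dirichlet approximation is admitted, so that the uniform exponent cannot be inflated above the threshold supplied by Theorem \ref{Dir-d}.

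The main obstacle is the realization part. The transference inequalities are essentially a formal consequence of Minkowski--Mahler once the Pl\"ucker dictionary is set up. But building a single $\bx$ that simultaneously realizes the prescribed ordinary exponents at every intermediate dimension, while keeping every uniform exponent pinned at the Dirichlet value $\omega_d$, requires a delicate combinatorial orchestration of approximations across the different Grassmannians $\mathrm{Gr}(d{+}1, n{+}1)$ and is the technical heart of Roy's fabrication arguments underlying the Laurent--Roy theorem.
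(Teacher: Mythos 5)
The paper does not actually supply a proof of this theorem. It is a named theorem attributed to Laurent and Roy, and the text immediately afterward simply cites \cite{MLwd} for the transference inequalities \eqref{gugd} and \cite{Royspec} for the ``furthermore'' realization part, noting also that the inequalities split Khintchine's classical transference and that the realization extends Jarn\'{\i}k's optimality result. So there is no in-paper proof to compare against; your task was effectively to reconstruct the arguments in the cited works.

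Your outline captures the broad mechanism for the inequalities (successive minima of the one-parameter family of convex bodies, Pl\"ucker coordinates, Mahler's compound bodies, Minkowski's second theorem), and this is indeed the framework Laurent and Schmidt work in. But two details in your setup are imprecise and would need repair. First, your quantities $\tau_i := \limsup_N (-\log\lambda_i(N))/\log N$ do not by themselves determine the exponents $\omega_d(\bx)$: as the paper's Lemma \ref{DS}(4)--(5) makes explicit, $\omega_d(\bx)$ is read off from a $\liminf$ of a \emph{partial sum} $L_{\bx,1}(t)+\cdots+L_{\bx,n-d}(t)$, and a $\liminf$ of a sum is not a function of individual $\limsup$'s. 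The inequalities \eqref{gugd} have to be proved at the level of these partial-sum quantities (or uniformly in $t$), not by algebra on the $\tau_i$'s alone; otherwise one cannot control the ``$\leq$'' versus ``$\geq$'' directions when limits are taken. Second, and more substantially, your plan for the realization half is a direct Jarn\'{\i}k-style nested construction over Grassmannians, whereas Roy's proof in \cite{Royspec} proceeds very differently: one designs a Roy $(n+1)$-system (a piecewise-linear map with prescribed asymptotics of partial sums) and then invokes the realization theorem of parametric geometry of numbers -- precisely the paper's Theorem \ref{thm-roy} -- to manufacture $\bx$ with $\bL_{\bx}$ staying bounded distance from that system. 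This is exactly the machinery the paper deploys to prove its own main theorem, so a reader of this paper would recognize the intended route; the direct nested-interval construction you sketch would face serious difficulties keeping all the intermediate exponents simultaneously pinned and the uniform exponents exactly at $\omega_d$, and it is not what the cited source does.
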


\noindent The transference inequalities \eqref{gugd}  are due to Laurent \cite{MLwd}. Equivalently, they can be re-written in the language of Schmidt \cite{SchH} as the {\em Going-up transfer}
\begin{equation}\label{gugdUP} \omega_{d+1}(\btheta) \,  \ge  \, \frac{(n-d) \, \omega_{d}(\btheta)+1}{n-d-1}  \qquad ( 0\le d \le n-2)
\end{equation}
and the {\em Going-down transfer}
\begin{equation}\label{gugdDOWN}
\omega_{d-1}(\btheta) \,  \ge  \, \frac{d \, \omega_{d}(\btheta)}{\omega_{d}(\btheta)+d+1}   \qquad ( 1\le d \le n-1)  \, .
\end{equation}
As pointed in \cite{MLwd}, on iterating \eqref{gugdUP}  and  \eqref{gugdDOWN}  we obtain Khintchine's classical transference principle \cite{Khin1}:
 $$
 \frac{\omega_{n-1}(\bx)}{(n-1)\omega_{n-1}(\bx)+n} \le \omega_{0}(\bx) \le \frac{\omega_{n-1}(\bx)-n+1}{n}  \, .
$$
 Thus the transference inequalities \eqref{gugd}  of Laurent naturally split  those of Khintchine  relating the simultaneous and dual exponents $\omega_{0}(\bx) $ and $ \omega_{n-1}(\bx)$.  The furthermore part of Theorem~\ref{GUGD},  shows that transference inequalities of Laurent are optimal and was proved by Roy \cite{Royspec}.  It extends  the classical work of Jarn\'ik \cite{JAR} showing that  Khintchine's transference principle is optimal.

%
%
%
%\noindent The transference inequalities \eqref{gugd}  are due to Laurent \cite{MLwd} and  they naturally split Khintchine's
%classical transference principle \cite{Khin1} relating the simultaneous and dual exponents $\omega_{0}(\bx) $ and $ \omega_{n-1}(\bx)$; namely \textcolor{red}{(maybe do it as in Laurent  after mentioning going up and going down inequalities?)}
%$$
% \frac{\omega_{n-1}(\bx)}{(n-1)\omega_{n-1}(\bx)+n} \le \omega_{0}(\bx) \le \frac{\omega_{n-1}(\bx)-n+1}{n}
%$$
%

\begin{remark} \label{shproof}
The  Laurent transference inequalities \eqref{gugd} are equally valid for the uniform exponents. Indeed, Laurent's proof for the ordinary exponents can be naturally adapted to the uniform setting -- see for example \cite{GerActa}.  With \eqref{gugd} for uniform exponents at hand, it is easily seen that for any $\bx \in \R^n$ and $1\le d \le n$, the statement that  $\hat{\omega}_d(\bx)=\omega_d$ is equivalent to $\hat{\omega}_{d-1}(\bx)=\omega_{d-1}$. Hence, it follows that the very singular  sets  $\VSing_n^d \, (0\le d \le n-1)$  discussed within  Remark \ref{vsing} are equivalent.
\end{remark}

\bigskip

As usual let $d$ be an integer with $0\le d \le n-1$. Then given a real number  $\tau \ge 0 $,  consider the Diophantine sets
$$\bfW_n^d( \tau )\, := \, \{\bx\in \RR^n : {\omega}_{d}(\bx)\ge  \tau  \}$$
and
$$\bfE_n^d( \tau )\, := \, \{\bx\in \RR^n : {\omega}_{d}(\bx)  =  \tau  \}   \, . $$
In dimension one, the latter corresponds to the exact order sets first studied by G\"uting within the context of Mahler's classification of transcendental numbers  -- see \cite{BDV-gut} and references within for further details.  Note that by definition, for any $0\le d \le n-1$ we have that
$$
\bfW_n^d( \tau  ) \, = \, \R^n  \qquad   {\rm if } \quad \tau  \leq \omega_d
$$
and
$$
\Bad_n\cap \bfW_n^{d}({\tau })=\emptyset \qquad   {\rm if } \quad \tau  > \omega_d   \, .
$$

\noindent Note that in view of \eqref{svae}, the set $\bfE_n^d( \omega_d )$ is of full $n$-dimensional Lebesgue measure and since $ \bfE_n^d( \tau )  \subseteq \bfW_n^d( \tau )$, it follows that $\Bad_n\cap \bfE_n^{d}({\tau })=\emptyset $ if  $\tau  > \omega_d  $.

Using the parametric geometry of numbers a la Schmidt $\&$ Summerer \cite{SSfirst,SS} and Roy \cite{Roy}, we prove  a stronger version of Theorem~\ref{thslv1} that involves the exact order sets $\bfE_n^d( \tau )$ and the following quantitative form of the set of Dirichlet improvable points $\DI_n$. Given $\eps\in(0,1)$, let
$ \DI_n(\eps) $  denote the set of $\bx \in \R^n$  such that, for all $N > N_0( \bx, \eps) $ sufficiently large there exists $\bq \in \mathbb{Z}^n{\setminus}\{0\}$ such that \eqref{equ-basic} holds.

\begin{remark} \label{dualeps}
By definition, it follows that
$$
\DI_n=\bigcup_{\eps\in(0,1)}\DI_n(\eps)  \, .
$$
Recall, that in view of Davenport $\&$ Schmidt \cite[Theorem~2]{DavSch} we can define $ \DI_n $ via either the `simultaneous' inequality \eqref{DIsv1} or the `dual' inequality  \eqref{equ-basic}  -- both give rise to the same set. However, if we fix $\eps > 0$ and  a point $ x \in \DI_n(\eps) $, then it is not necessarily true that for all $N$ sufficiently large there exists $q \in \mathbb{Z}$ such that \eqref{DIsv1} holds   (for the same $\eps$). In view of this, we emphasise the fact  that when referring to the quantitative  Dirichlet improvable set $\DI_n(\eps)$ it will always be via the `dual' inequality  \eqref{equ-basic}.
\end{remark}

The following theorem constitutes our main result.

\begin{thm}\label{thm-main}
  Let $n\ge 2$ and $\eps\in (0,1)$. Then, given any  $n$-tuple of real numbers  $\tau_{0}, \ldots, \tau_{n-1} \in [0, \infty]$ with $\tau_{0}\ge \omega_{0}$ and $\tau_{d}$ $( 1\le d \le n-1) $ satisfying  \eqref{gugdsv}, the set
  \[\left(\bigcap_{d=0}^{n-1} \bfE_n^d({\tau}_{d}) \cap \Big(\DI_n(\eps) \setminus \DI_n(\eps \e^{-10(n+1)^2(n+10)})\Big)\right)\setminus  (\Bad_n\cup \Sing_n)\] {has continuum many points}.  In particular, for any $\tau \geq {\omega}_{d}\,$,  the set $(\DI_n\cap \bfE_n^d(\tau))\setminus (\Bad_n\cup \Sing_n)$ {has continuum many points}.
\end{thm}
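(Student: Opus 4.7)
The plan is to invoke the parametric geometry of numbers of Schmidt--Summerer and Roy, and to refine the construction that underlies the ``furthermore'' part of Theorem~\ref{GUGD}. To a point $\bx\in\R^n$ one associates the normalized successive minima functions $L_1(\bx,T)\le\cdots\le L_n(\bx,T)$ of a one-parameter family of convex bodies (in the log scale $T=\log N$); the partial sums $L_1+\cdots+L_{d+1}$ form the ``combined graph'' that encodes all the information we care about, namely the ordinary and uniform exponents $\omega_d(\bx)$, $\hat\omega_d(\bx)$, and membership in $\DI_n(\eps)$, $\Bad_n$, $\Sing_n$. Roy's realization theorem asserts that any admissible $n$-system (an explicit piecewise-linear combinatorial template) is, up to an additive error depending only on $n$, the combined graph of some genuine $\bx$ with $1,x_1,\dots,x_n$ linearly $\Q$-independent. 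My first step would be to translate each design goal -- prescribed ordinary exponents $\omega_d(\bx)=\tau_d$, uniform exponents $\hat\omega_d(\bx)=\omega_d$ (forcing $\bx\notin\Sing_n$ via Remark~\ref{shproof}), and a Dirichlet improvability constant confined to the window $(\eps\e^{-10(n+1)^2(n+10)},\eps]$ -- into explicit constraints on the $n$-system.

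Concretely, I would start from an $n$-system whose combined-graph slopes at infinity produce the prescribed exponents $\tau_d$, as in Roy's proof of the furthermore part of Theorem~\ref{GUGD}, and then perturb it so that the first function $L_1$ has infinitely many local maxima pinned at a height corresponding to Dirichlet improvability at constant $\eps$, while simultaneously forcing $L_1$ to descend, infinitely often, well below any strictly smaller threshold. The explicit constant $10(n+1)^2(n+10)$ should be precisely the ``budget'' needed to absorb (i) the bounded additive slack between an $n$-system and the true combined graph of its realiser, and (ii) the intrinsic geometric drops between consecutive linear pieces of the template; both depend only on $n$, and a careful bookkeeping should fit them into that explicit exponent. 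Non-singularity will then follow because $\hat\omega_d=\omega_d$ holds along the whole construction; non-bad-approximability follows because $L_1$ oscillates below the generic trend (immediate when $\tau_0>\omega_0$, and arising from the engineered oscillation in the boundary case $\tau_0=\omega_0$).

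Uncountability should come for free from combinatorial freedom in the template: at each of the infinitely many local maxima of $L_1$, there is a continuum of admissible choices of its exact abscissa that preserve the prescribed exponents, the improvability window and non-singularity; distinct choices yield, via Roy's realisation, distinct points of $\R^n$. The ``in particular'' clause is then immediate: for any $\tau\ge\omega_d$, choose $\tau_d=\tau$ and fill in the remaining $\tau_{d'}$ at admissible values (for instance, at the extremes of \eqref{gugdsv}). The main obstacle I anticipate is the quantitative refinement of Roy's realisation theorem: one must bound the discrepancy between the designed $n$-system and the combined graph of its realiser by a constant depending only on $n$ and compatible with the explicit exponential window $10(n+1)^2(n+10)$, which will require a careful analysis of how much the first function can drift at each transition between linear pieces of the template, and in particular at the engineered peaks that certify membership in $\DI_n(\eps)\setminus\DI_n(\eps\e^{-10(n+1)^2(n+10)})$.
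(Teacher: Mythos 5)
Your overall plan does match the paper's proof: use the Schmidt--Summerer/Roy framework, realize a designed piecewise-linear template by an actual $\bx$ via Roy's theorem (the paper's Theorem~\ref{thm-roy}), translate the membership conditions for $\DI_n(\eps)$, $\Bad_n$, $\Sing_n$, $\bfE_n^d(\tau)$ into liminf/limsup conditions on the successive-minima functions (the paper's Lemma~\ref{DS}), build explicit templates with periodic ``valleys'' pinned at the right height, and use the free parameter in the construction to get uncountably many realizers. The paper's construction is a glued sequence of ``building blocks'' on intervals $[T_k^i,T_k^{i+1}]$ (\S\ref{Blocks}--\S\ref{PfPropSyst}), perturbed by a parameter $\delta\in[0,1/32n^2)$ that yields a one-parameter family of systems. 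That said, there are two genuine gaps in your argument.

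\textbf{Non-singularity is not a consequence of $\hat\omega_d(\bx)=\omega_d$.} You claim $\bx\notin\Sing_n$ because $\hat\omega_d(\bx)=\omega_d$, invoking Remark~\ref{shproof}. But Remark~\ref{shproof} only asserts that the \emph{very} singular sets $\VSing_n^d$ coincide across $d$; the inclusion $\VSing_n\subseteq\Sing_n$ is strict in general, so $\hat\omega_d(\bx)=\omega_d$ (i.e.\ $\bx\notin\VSing_n$) does \emph{not} rule out $\bx\in\Sing_n$. Concretely, a template whose first function behaves like $P_1(t)=\tfrac{t}{n+1}-\log\log t$ has $\limsup P_1(t)/t=\tfrac{1}{n+1}$ (so the realizer has $\hat\omega_{n-1}=n$) yet $\liminf(\tfrac{t}{n+1}-P_1(t))=+\infty$, which by Lemma~\ref{DS}(3) produces a singular point. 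The correct reason for non-singularity in the paper is the condition \eqref{liminf}, namely $\liminf_{t\to\infty}(\tfrac{t}{n+1}-P_1(t))=\gamma<\infty$, which forces $\liminf_{t\to\infty}(\tfrac{t}{n+1}-L_{\bx,1}(t))$ to be finite and hence $\bx\notin\Sing_n$ by Lemma~\ref{DS}(3); equivalently, $\bx\notin\DI_n(\eps')$ for $\eps'$ small already contradicts singularity, since $\Sing_n=\bigcap_{\eps'\in(0,1)}\DI_n(\eps')$. Your ``confined-window'' observation actually contains the right argument, so you should derive non-singularity from it directly rather than from uniform exponents.

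\textbf{The uncountability argument needs a separation mechanism.} You assert that ``distinct choices yield, via Roy's realisation, distinct points of $\R^n$.'' This is not automatic: the realization theorem only controls the system up to an additive error $C_n=5(n+1)^2(n+10)$, so two templates within $2C_n$ of one another at all times could very well be realized by the \emph{same} $\bx$. The paper handles this with the notion of non-equivalent systems (Definition~\ref{neRS}): two systems are non-equivalent if at some time they differ by more than $2C_n$, and then no single $\bx$ can be close (in the sense of Theorem~\ref{thm-roy}) to both. The one-parameter perturbation $\bP^\delta$ is engineered so that $|P_{n+1}^{\delta'}(T_k)-P_{n+1}^\delta(T_k)|=|\delta'-\delta|\,T_k/k\to\infty$, forcing any two distinct perturbations to be eventually non-equivalent. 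Without an analogous divergence-of-separation argument, ``a continuum of abscissae'' does not by itself yield uncountably many distinct points. This is the place where a careful bookkeeping really is required, not merely the ``budget'' for the realization slack.
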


\noindent
Note that on taking $\tau =  {\omega}_{d}\,$  in the `in particular' part of the Theorem~\ref{thm-main}, we immediately  obtain the statement of  Theorem~\ref{thslv1}.

\section{Preliminaries}

In this section we start by recalling aspects of the theory of  parametric geometry of numbers that will be used in establishing Theorem~\ref{thm-main}. We then use this to essentially  reformulate the Diophantine sets appearing in the statement of Theorem~\ref{thm-main} in terms of successive minima.   Moreover, we will see that the proof of Proposition~\ref{prop-equiv} is a pretty straightforward application of this reformulation.

\subsection{The parametric geometry of numbers}\label{PGN}

%We will closely follow Roy \cite{Royspec} in our exposition.
Fix $n \in \N$ and $\bx\in \RR^n$.  For each real number $t\ge 0$, consider the convex body
\begin{equation}\label{def-sc}
  \sC_{\bx}(\e^t):= \left\{ \by \in \RR^{n+1}: |y_i|\le 1  \ (1\le i\le n), \ \Big|\sum_{i=1}^{n}y_ix_i+y_{n+1}\Big|\le e^{-t}\right\}.
\end{equation}
Then, for each $i=1, \ldots, n+1$ and $t>0$, let
\begin{equation}\label{vb8D}
\lambda_{\bx,i}(t):=\lambda_i\big(\ZZ^{n+1}, \sC_{\bx}(\e^t)\big)
\end{equation}
denotes the $i$-th successive minima of the convex body $\sC_{\bx}(\e^t)$ with respect to the lattice $\ZZ^{n+1}$. In other words,  $\lambda_i\big(\ZZ^{n+1}, \sC_{\bx}(\e^t)\big)$ is the smallest real number $\lambda$ such that the rescaled convex body $\lambda \, \sC_{\bx}(\e^t)$ contains at least $i$ linearly independent points of $\Z^{n+1}$. In turn, following  Schmidt $\&$ Summerer \cite{SS},  we let
 \begin{equation}\label{def-convexbodySV}
 L_{\bx, i}(t): = \log \lambda_{\bx,i}(t)=\log\lambda_i\big(\ZZ^{n+1}, \sC_{\btheta}(\e^t)\big)  \qquad (t\ge 0, \ 1 \le i \le n+1)
\end{equation}
 and consider the map
\begin{equation}\label{def-convexbody}
\bL_\bx: [0, \infty)\rightarrow \RR^{n+1} \ : \ t \  \rightarrow \bL_{\bx}(t):=\big(L_{\bx,1}(t),\ldots, L_{\bx, n+1}(t)\big) \, .
\end{equation}

The following  notion was introduced by Roy in \cite[Defintion 4.5]{Royspec}. It generalises the $(n+1)$-systems of Schmidt $\&$ Summerer \cite{SS}.  In short, these `systems' incorporate desirable behavior of the maps $\bL_{\bx}$ that in turn lead to desirable approximation results.

\begin{definition} \label{Roybaby}
 Let $I$ be an subinterval of $[0,\infty)$ with non-empty interior. A \emph{Roy $(n+1)$--system} on $I$ is a continuous piecewise linear map $\bP=(P_1,\ldots, P_{n+1}): I \rightarrow \RR^{n+1}$ with the following properties:
 \begin{itemize}
   \item For each $t\in I$, we have $0\le P_1(t)\le \cdots \le P_{n+1}(t)$ and $P_1(t)+\cdots+ P_{n+1}(t)=t$.
   \item If $I'\subset I$ is a nonempty open subinterval on which $\bP$ is differentiable, then there are integers   $r_1,r_2$ with $1\le r_1\le r_2\le n+1$ such that the functions $P_{r_1}, P_{r_1+1},\ldots, P_{r_2}$ coincide on the whole interval $I'$ and have slope $1/(r_2-r_1+1)$ on $I'$, while all other components $P_i$ of $ \bP$ have slope $0$ on $I'$.
   \item If $t$ is an interior point of $I$ at which $\bP$ is not differentiable and if $r_1, r_2$, $s_1, s_2$ are integers for which
       \begin{equation*}
         P_i'(t^-)=\frac{1}{r_2-r_1+1}\quad (r_1\le i\le r_2) \quad \text{ and } \quad P_i'(t^+)=\frac{1}{s_2-s_1+1}\quad (s_1\le i\le s_2) \, ,
       \end{equation*}
         and if $r_1\le s_2$, then we have that $P_{r_1}(t)=P_{r_1+1}(t)=\cdots= P_{s_2}(t)$.
 \end{itemize}
\end{definition}

\noindent Note that, for any piecewise linear function $F:\RR\rightarrow \RR$, the left derivative $F'(t^-)$ and the right derivative  $F'(t^+)$ always exist and the points at which $F$ is not differentiable are just the points with different left and right  derivatives.

\bigskip

\begin{remark} \label{AM}
The $(n+1)$--systems of Schmidt $\&$ Summerer correspond to taking
$ r_1=r_2$ and $s_1=s_2$ in Definition~\ref{Roybaby}.
\end{remark}

A Roy $(n+1)$--system  has the following useful approximation property.  It essentially represents an amalgamation  of
\cite[Theorems 1.3 $\&$ 1.8]{Roy} and \cite[Corollary 4.7]{Royspec} adapted for our purposes.

\begin{thm}\label{thm-roy}
Let $n\in \NN$ and $t_0 \ge 0$. For each $\btheta\in\RR^n$, there exists a Roy $(n+1)$--system $\bP: [t_0, \infty) \rightarrow \R^{n+1}$  such that the function $\bL_{\btheta}-\bP$ is bounded on $[t_0, \infty)$.  Conversely, for each Roy $(n+1)$--system $\bP: [t_0,\infty)\rightarrow \RR^{n+1}$, there exists   $\btheta\in \RR^{n}$ such that  the function $\bL_{\btheta}-\bP$ is bounded on $[t_0, \infty)$. In particular, for each $t\ge t_0$
\begin{equation}\label{vb+}
\|\bL_{\btheta}(t) - \bP(t)\|  \, \le  \,   5 (n+1)^2(n+10)  \, .
\end{equation}
\end{thm}

\bigskip

\begin{proof}[Proof of Theorem \ref{thm-roy}.]
 As already mentioned, Theorem \ref{thm-roy} draws upon the works \cite{Roy, Royspec} of Roy and it is important to  note  that there, the convex body is defined slightly differently  from the one given by  \eqref{def-sc}.  Indeed, for a fixed $\bu\in \R^{n+1} {\setminus}\{0\}$ and each real number $t \ge 0$, Roy works with the convex body
\[\widetilde{\cC}_{\bu}(\e^t):=\left\{\by \in \RR^{n+1}: \|\by\|\le 1, \quad |\by\cdot \bu|\le \e^{-t}\right\} \, . \]
Now for any fixed $\bx \in \R^n$, let $\bx':=(\bx,1) \in \R^{n+1}$  and so by definition
\[\widetilde{\cC}_{\bx'}(\e^t)=\left\{\by \in \RR^{n+1}: \|\by\|\le 1, \quad \Big|\sum_{i=1}^{n}y_ix_i+y_{n+1}\Big|\le \e^{-t}\right\}.\]
Furthermore,  let $\tilde{\bL}_{\bx'}$ denote the function corresponding to \eqref{def-convexbody} with  $\sC_{\btheta}(\e^t)$ replaced by $\widetilde{\cC}_{\bx'}(\e^t)$ within \eqref{def-convexbodySV}.  It is not difficult to see that our  convex body and the associated map, which are convenient for what we have in mind,  are closely related to those of Roy and indeed Schmidt $ \&$ Summerer:  for any fixed $\bx\in \R^n$ and any $ t \ge 0$
\begin{equation*}\label{compare-convexbody}
  \widetilde{\cC}_{\bx'}(\e^t) \ \subset \ \sC_{\bx}(\e^t)\ \subset \ (n\|\bx\|+1)\, \widetilde{\cC}_{\bx'}(\e^t)
\end{equation*}
and thus it follows that
\begin{equation}\label{compare-l-function}
\|\bL_{\bx}(t)-\tilde{\bL}_{\bx'}(t)\| \le \log (n\|\bx\|+1) \, .
\end{equation}

We now proceed with establishing the theorem.  On combining \cite[Theorem1.3]{Roy} and \cite[Lemma 2.10]{Roy},   we find that for any $\bx\in \RR^n$ there exists a $(n+1)$--system (see Remark \ref{AM}) $\widetilde{\bP}$ such that the function $\tilde{\bL}_{\bu_{\bx'}}-\widetilde{\bP}$ is bounded for all $t\ge t_0$. Here $\bu_{\bx'}$ denotes the unit vector associated to $\bx' \in \R^{n+1}$.  Note that  $\tilde{\bL}_{\bu_{\bx'}}$ and $\tilde{\bL}_{\bx'}$ only differ by a constant and so the first part of the theorem follows on using \eqref{compare-l-function} and the fact that by definition any  $(n+1)$--system is a  Roy $(n+1)$--system.
  Regarding the converse part,  it follows via \cite[Corollary 4.7]{Royspec} that for any given  Roy $(n+1)$--system $\bP: [t_0,\infty)\rightarrow \RR^{n+1}$ and any $\eps>0$,  there is a  $(n+1)$--system $\widetilde{\bP}$ such that
\begin{equation}\label{compare-roy-normal}
  \|\bP(t)-\widetilde{\bP}(t)\|  \le \eps   \quad \text{ for all } t\ge t_0.
\end{equation}
In view of \cite[Theorem 8.1]{Roy}, there exists a unit vector $\bu \in \RR^{n+1}$ such that
\begin{equation}\label{equ-4-1}
 \|\widetilde{\bP}(t)-\tilde{\bL}_{\bu}(t)\|  \le 3(n+1)^2(n+10) \quad \text{ for all } t\ge t_0.
\end{equation}
Without loss of generality, we can assume that $|u_{n+1}|=\|\bu\|:=\max \{|u_1|, \ldots, |u_{n+1}|\}$ and so  $(n+1)^{-1/2}\le |u_{n+1}|\le 1$. Now  let $$\btheta:=(u_1u_{n+1}^{-1},\ldots, u_nu_{n+1}^{-1})\in \RR^n \, . $$ Then,  $\|\bx\|\le 1$ and
\begin{equation}\label{equ-4-2}
  \|\tilde{\bL}_{\bx'}(t)-\tilde{\bL}_{\bu}(t)\| \le \frac12 \log(n+1)\quad \text{ for all } t\ge t_0.
\end{equation}
 The upshot is that on using \eqref{compare-roy-normal} with  $\eps:=\log(n+1)/2$, \eqref{compare-l-function}, \eqref{equ-4-2} and \eqref{equ-4-1} in that order,  we obtain that
\begin{eqnarray}\label{equ-3-2}
  \|\bL_{\bx}(t)-\bP(t)\| & \le   &  \frac12 \log(n+1) +  \log(n+1)+ \frac12 \log(n+1)  + 3(n+1)^2(n+10)   \nonumber \\[2ex]
  & < &  5(n+1)^2(n+10)\quad \text{ for all } t\ge t_0.
\end{eqnarray}
This completes the proof of  Theorem \ref{thm-roy}.
\end{proof}

%%%%%%%%%%%%%%%

%\begin{remark}
%  Theorem \ref{thm-roy} is slightly different from, and can be easily deduced from Theorem 8.1 of \cite{Roy}. %Indeed, in \cite{Roy}, it is proved that there exists a unit vector $\bu=(u_1,\ldots, u_{n+1})\in \RR^{n+1}$ such %that $\|\bP(t)-\bL_{\btheta}(t)\|_{\infty}\le 3n^2(n+9)$ for all $t\ge t_0$. Without loss of generality, we may %assume that $|u_{n+1}|=\max_i |u_i|$. Then we have  $(n+1)^{-1/2}\le |u_{n+1}|\le 1$. If we let %$\btheta=(u_1u_{n+1}^{-1},\ldots, u_nu_{n+1}^{-1})\in \RR^n$, then it is clear that %$\|\bL_{\bu}(t)-\bL_{\btheta}(t)\|_{\infty}\le 1/2\log(n+1)$ for all $t\ge t_0$, from which Theorem \ref{thm-roy} %follows.
%\end{remark}

%%%%%%%%%%%%%%%%

The following notion of non-equivalent systems will prove to be useful.

\begin{definition} \label{neRS}
Two Roy $(n+1)$--systems $\bP_1$  and $\bP_2$ defined on the same
subinterval $I$ of $[0,\infty)$ are said to be \emph{non-equivalent} if
there exists some $t\in I$ such that $$ |\bP_1(t)-\bP_2(t)|>10
(n+1)^2(n+10)  \, . $$
\end{definition}

\noindent By definition, it follows that no point in $\RR^{n}$ can be
close (in the sense of Theorem \ref{thm-roy}) to  two non-equivalent Roy
$(n+1)$--systems defined on $[t_0, \infty)$ at the same time.

\subsection{Expressing Diophantine sets via successive minima}\label{sec-reform}

We give a reformulation of the Diophantine sets associated with Theorem \ref{thm-main}  in   terms of the function $\bL_{\bx}$. This is at the heart of its proof -- it  brings into play the parametric geometry of numbers.  Also,  we shall see that the equivalence of the $d$-badly approximable sets $\Bad_n^d$ (resp. the $d$-singular sets $\Sing_n^d$) is in essence a direct consequence of the reformulation. Indeed, we start with this in mind.

Let $n \ge 2 $ and $ 0 \le d \le n-1$. It can be verified, by using the lemma appearing in \cite[Section 4]{BugLau2} and appropriately adapting the proof of the proposition  in  \cite[Section 4]{BugLau2},   that

\begin{itemize}
  \item $\bx \in \Bad_n^d$ if and only if there exists a constant  $\delta > 0$ such that for all sufficiently large  $t$
\begin{equation}\label{equ-equi}
  \frac{(n-d)t}{n+1} - (L_{\bx,1}(t) + \cdots + L_{\bx,n-d}(t)) \le \delta.
\end{equation}
  \item $\bx \in \Sing_n^d$ if and only if for any  $\delta > 0$ there exists a constant $t_0= t_0({\delta})  > 0 $ such that for all $t \ge  t_0$
\begin{equation}\label{equ-equiSING}
  \frac{(n-d)t}{n+1} - (L_{\bx,1}(t) + \cdots + L_{\bx,n-d}(t)) \ge \delta.
\end{equation}
\end{itemize}
For the sake of completeness, in \S\ref{App},  we will provide the details of how these  equivalences follow from \cite[Section 4]{BugLau2}.    We  can now swiftly  show that
$$
 \Bad_n^d   =   \Bad_n^{n-1}    \quad  {\rm and } \quad     \Sing_n^d   =   \Sing_n^{n-1}    \qquad (0 \le d \le n-2)  \, ;
$$
that is to say that any $d$--badly approximable set (resp. $d$--singular set) is equivalent to the dual set.
This will of course  establish Proposition \ref{prop-equiv}.

 \medskip

 \begin{proof}[Proof of Proposition \ref{prop-equiv}]

For simplicity, given $\bx \in \R^n$ we let
\[g_{\bx,i}(t):= \frac{t}{n+1} - L_{\bx,i}(t)    \qquad (0 \le i \le n+1)  \,  .\]
By definition the quantity $L_{\bx,i}$ is increasing with $i$ and so it follows that
\begin{equation}\label{e-decreasing}
  g_{\bx,1}(t) \ge g_{\bx, 2} (t) \ge \cdots \ge g_{\bx,n+1}(t).
\end{equation}
In view of  Minkowski's  second convex body theorem, for any $\bx \in \R^n$ we have that
 $$L_{\btheta,1}(t) + L_{\bx,2}(t)+ \cdots + L_{\btheta,n+1}(t) = t +O(1) \, .  $$
 Thus, there exists a positive constant $c = c(n) > 0$ depending only on $n$  such that
 \begin{equation}\label{e-min-sec}
   g_{\bx,1}(t)+g_{\bx,2}(t)+\cdots+g_{\bx,n+1}(t)   \ge - c \, .
 \end{equation}

 Now suppose $\bx \in \Bad_n^d$. Then in view of
\eqref{equ-equi} and \eqref{e-min-sec}, it follows that
 \[\sum_{i=n-d+1}^{ n+1} g_{\bx,i}(t) \ge -\delta - c \,  \]
 which together with \eqref{e-decreasing} implies that
 \[g_{\bx,i}(t)\ge \frac{1}{d+1}\sum_{j=n-d+1}^{d+1} g_{\bx,j}(t) \ge -\frac{\delta + c }{d+1} \qquad   ( 1\le i\le n-d) \, .\]
 In turn, on using \eqref{e-decreasing} again, we find that
 \begin{eqnarray*}
 g_{\bx,1}(t) \le \sum_{i=1}^{n-d} g_{\bx,i}(t)- \sum_{i=2}^{n-d} g_{\bx,i}(t)& \le  &  \delta + (n-d-1)\frac{\delta + c }{d+1} \\ & < & \frac{n}{d+1}(\delta + c) .
\end{eqnarray*}
In other words, \eqref{equ-equi} holds with $d=n-1$ and so $\bx \in \Bad_n^{n-1}$. For the converse, simply observe that if \eqref{equ-equi} holds with $d=n-1$ then for any other $ 0 \le d \le n-2$
$$
\sum_{i=1}^{n-d} g_{\bx,i}(t) \stackrel{\eqref{e-decreasing}}{\le} (n-d)  \; g_{\bx,1}(t) \le  (n-d) \delta  \, .
$$
In other words, $\bx \in \Bad_n^{d}$ and this thereby completes the proof of the badly approximable part of the proposition. The  proof of the singular part is similar with the most obvious modifications (namely, using  \eqref{equ-equiSING} instead of \eqref{equ-equi}) and will be left for the reader.

\end{proof}

\begin{remark} \label{ohyaohya}
  In \S\ref{App}, apart from providing details of the statements associated with \eqref{equ-equi} and \eqref{equ-equiSING}, we  give a `dynamical' proof of Proposition~\ref{prop-equiv}.  In addition to  providing an alternative insight,  it has the advantage of being self-contained in that it avoids appealing to \eqref{equ-equi} and \eqref{equ-equiSING} which rely on the lemma  and the arguments  appearing in  \cite[Section 4]{BugLau2}.
\end{remark}

The following statement summarises the above findings concerning the  badly approximable and singular sets and deals with the other remaining  Diophantine sets associated with Theorem \ref{thm-main}.

\begin{lemma}\label{DS}
Let $\btheta\in\RR^n$. Then
\begin{enumerate}
\item
{$\btheta \in \DI_n(\eps)$} if and only if for all sufficiently large  $t$
\begin{equation}\label{DS+}
\frac{t}{n+1}-L_{\bx, 1}(t) \ge -\frac{\log \eps}{n+1}.
\end{equation}
 \item{$\btheta \in \Bad_n$} if and only if there exists  $\delta>0$ such that
\[\limsup_{t \to \infty} \left(\frac{t}{n+1}-L_{\bx, 1}(t) \right)\le \delta.\]
\item{$\btheta \in \Sing_n$} if and only if for any $\delta>0$ \[\liminf_{t \to \infty} \left(\frac{t}{n+1} - L_{\btheta,1}(t) \right) \ge \delta.\]
\item{$\btheta \in \bfW_n^d({\tau}) $} if and only if \[\liminf_{t \to \infty}\frac{L_{\btheta,1}(t) + \cdots + L_{\btheta,n-d}(t)}{t} \le \frac{1}{1+\tau}  \qquad (0\le d \le n-1).\]
\item{$\btheta \in \bfE_n^d({\tau})$} if and only if \[\liminf_{t \to \infty} \frac{L_{\btheta,1}(t) + \cdots + L_{\btheta,n-d}(t)}{t} = \frac{1}{1+\tau}   \qquad (0\le d \le n-1).\]
\end{enumerate}
\end{lemma}

\begin{proof}[Proof of Lemma \ref{DS}]
Parts 4) and 5)  are a direct consequence of  \cite[Proposition~3.1]{Royspec}. The proof of parts 2) and  3) are a direct consequence of  \eqref{equ-equi} and \eqref{equ-equiSING} respectively  together with Proposition \ref{prop-equiv}. It remains to prove  part 1).  Thus, let $\btheta \in \DI_n(\eps)$ for some  $\eps \in (0,1)$. Then by definition, for all  sufficiently large $t'$
  \begin{equation*}
    |\btheta\cdot\bq-p|\le  \eps \e^{-nt'}  \qquad {\rm  and }  \qquad   \|\bq\|  \le e^{t'}
  \end{equation*}
  always has a solution $(p,\bq)\in \ZZ \times (\ZZ^n \setminus \{\bzero\})$.  This is equivalent to saying that
  for all  sufficiently large $t'$
  \[\lambda_1(\ZZ^{n+1}, \sC_{\bx}(e^{t}))\le e^{t'}, \quad \text{ where } t=(n+1)t'-\log \eps.\]
  The latter is equivalent to
  \[ \frac{t}{n+1}-L_{\bx, 1}(t) \ge -\frac{\log \eps}{n+1}\]
for all sufficiently  $t$, as desired.
\end{proof}

\begin{remark}
It is relatively straightforward to see that the proof of part 1) given above can be easily adapted to establish  \eqref{equ-equi} and \eqref{equ-equiSING} when $d=n-1$.
\end{remark}

\begin{remark} \label{kkk}
For the sake of completeness, it worth mentioning that in \S\ref{DTTGN} we formulate the notion of $d$-Dirichlet improvable sets via successive minima.  The approach taken is in line with that of this section in which the $d$-badly approximable and $d$-singular sets are expressed via \eqref{equ-equi} and \eqref{equ-equiSING}.
\end{remark}

%\begin{proof}[Proof of Lemma \ref{DS}]
%Parts (4) and (5)  are a direct consequence of  \cite[Proposition~3.1]{Royspec}. The proof of parts (1), (2) and  (3) are quite similar, so we will just prove (1).  Thus, let $\btheta \in \DI_n(\eps)$ for some  $\eps \in (0,1)$. Then by definition, for all  sufficiently large $t$
%  \begin{equation*}
%    |\btheta\cdot\bq-p|\le  \eps \e^{-nt}  \qquad {\rm  and }  \qquad   \|\bq\|  \le e^t
%  \end{equation*}
%  always has a solution $(p,\bq)\in \ZZ \times (\ZZ^n \setminus \{\bzero\})$.  This is equivalent to saying that
%  for all  sufficiently large $t$
%  \[\lambda_1(\ZZ^{n+1}, \sC_{\bx}(e^{t'}))\le e^{t}, \text{ where } t'=(n+1)t-\log \eps.\]
%  Hence, for all sufficiently  $t$
%  \[ \frac{t}{n+1}-L_{\bx, 1}(t) \ge -\frac{\log \eps}{n+1}\]
% as desired.
%\end{proof}

%%%

\section{Proof of Theorem \ref{thm-main}}

 Let $ n \ge 2$, $\eps\in (0, 1)$   and $\tau_{0}, \ldots ,\tau_{n-1}$ be as in Theorem \ref{thm-main} and let
 \begin{equation} \label{yesno} \gamma: =- \frac{\log \eps}{n+1} +C_n     \qquad  {\rm where  \  \ } \quad  C_n:=5(n+1)^2(n+10)  \, .
 \end{equation}
 Thus, $C_n$ is simply  the right hand side of inequality  appearing in Theorem~\ref{thm-roy}.   Then, on making use of Theorem \ref{thm-roy} and Lemma~\ref{DS}, it is easily verified  that the proof of Theorem~\ref{thm-main} is reduced to  constructing appropriate  Roy $(n+1)$--systems given by the following statement.

%\begin{lemma}
%There exist vectors $\btheta$ such that $\bL_\btheta$ satisfies
%\begin{enumerate}
%\item{$\inf_{q \ge T_0}\left( \frac{q}{n+1} - L_{\btheta,1}(q)\right) \ge  \gamma - C_n \ge C_n$}
%\item{$ \liminf_{q \to \infty}  \left( \frac{q}{n+1} - L_{\btheta,1}(q)\right) \le  \gamma + C_n $}
%\item{$ \liminf_{q \to \infty}  \frac{L_{\btheta,1}(q) + \cdots + L_{\btheta,d}(q)}{q} = \frac{1}{1+ \omega_{n-d}} %$}
%\end{enumerate}
%\end{lemma}

%$\begin{enumerate}
%\item{$\inf_{q \ge T_0}\left( \frac{q}{n+1} - P_1(q)\right) \ge \gamma \ge 2C_n$}
%\item{$ \liminf_{q \to \infty}  \left( \frac{q}{n+1} - P_1(q)\right) =  \gamma \ge 2C_n $}
%\item{$ \liminf_{q \to \infty}  \frac{P_1(q)+ \cdots + P{d}(q)}{q} = \frac{1}{1+ \omega_{n-d}} $}
%\end{enumerate}

\begin{lemma}\label{PropSyst}
There exists {continuum} many mutually non-equivalent Roy $(n+1)$--systems $\bP: [0,\infty)\rightarrow \RR^{n+1}$, such that
\begin{equation}\label{liminf}
  \liminf_{t \to \infty}  \left( \frac{t}{n+1} - P_1(t)\right) =  \gamma  \ ,
\end{equation}

\begin{equation}\label{limsup}
  \limsup_{t \to \infty}  \left( \frac{t}{n+1} - P_1(t)\right) =  \infty   \ ,
\end{equation}
and

\begin{equation}\label{d-exponents}
 \liminf_{t \to \infty}  \frac{P_1(t)+ \cdots + P_{d}(t)}{t} = \frac{1}{1+ \tau_{n-d}}    \qquad (1 \le d \le n) \, .
\end{equation}
\end{lemma}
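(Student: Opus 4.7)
The plan is to construct the required Roy $(n+1)$-systems directly, starting from a \emph{template} that already realizes the prescribed exponent profile. By the furthermore part of Theorem~\ref{GUGD} together with Theorem~\ref{thm-roy}, there exists a Roy $(n+1)$-system $\bP^{*}$ on $[0,\infty)$ whose corresponding point $\bx^{*}$ satisfies $\omega_d(\bx^{*})=\tau_d$ and $\hat{\omega}_d(\bx^{*})=\omega_d$ for every $0\le d\le n-1$. Via parts (4)--(5) of Lemma~\ref{DS}, the equalities $\omega_d(\bx^{*})=\tau_d$ translate (up to a bounded additive error that does not affect any $\liminf$ of ratios) into the identity \eqref{d-exponents} for $\bP^{*}$. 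However, $\hat{\omega}_0(\bx^{*})=1/n$ forces $\liminf_{t\to\infty}(t/(n+1)-P^{*}_1(t))=0$, so $\bP^{*}$ must still be modified in order to realize \eqref{liminf} and \eqref{limsup}.

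\textbf{Modification.} I would perturb $\bP^{*}$ along a sparse increasing sequence of times $T_1<T_2<\cdots$, splicing in two kinds of short piecewise-linear pieces. The first is a \emph{cap}: whenever the template would let $g_1(t):=t/(n+1)-P_1(t)$ drop below $\gamma$, replace the offending segment by one whose active index block does not contain~$1$, so that $P_1$ is temporarily flat and $g_1$ grows; then re-enter the template once $g_1$ has reset. Performing this cap systematically on $[T_1,\infty)$ gives $g_1(t)\ge\gamma$ eventually with recurrent equality, hence \eqref{liminf}. The second is an \emph{excursion}: at a subsequence of times, extend one of these flat stretches of $P_1$ by an amount $h_k\to\infty$, forcing $g_1$ to reach at least $h_k$ before being brought back to the cap level $\gamma$ by a short catch-up segment where the active block is $\{1\}$. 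This produces \eqref{limsup}. Choosing the times $T_k$ sparse enough that the cumulative length of all insertions on $[0,t]$ is $o(t)$ guarantees that $P_1(t)+\cdots+P_d(t)$ differs from the template value by $o(t)$, so \eqref{d-exponents} is inherited.

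\textbf{Uncountability and main obstacle.} An uncountable family is obtained by parameterizing the excursion heights, for instance choosing $h_k\in\{H_k,\,2H_k\}$ along a rapidly growing sequence $H_k\ge 10(n+1)^2(n+10)$. Distinct binary sequences differ by at least $H_k$ at the corresponding excursion times and therefore produce pairwise non-equivalent systems in the sense of Definition~\ref{neRS}. The main obstacle is the combinatorial bookkeeping hidden in the splicing step: each inserted cap or excursion must obey all three axioms of Definition~\ref{Roybaby}, and at every junction with the unmodified template the compatibility condition $P_{r_1}(t)=\cdots=P_{s_2}(t)$ (where $[r_1,r_2]$ and $[s_1,s_2]$ are the abutting active blocks with $r_1\le s_2$) must be satisfied. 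This forces an explicit and careful choice of splice times, of the active blocks used during cap and excursion, and of the matching of component heights at the endpoints of each insertion. Once this piecewise-linear engineering is arranged, the verification of \eqref{liminf}--\eqref{d-exponents}, and of the non-equivalence of distinct parameter choices, is straightforward.
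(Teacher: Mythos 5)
Your strategy is genuinely different from the paper's: you propose to obtain a template Roy system $\bP^{*}$ with the right exponent profile from Theorem~\ref{GUGD} and Theorem~\ref{thm-roy}, and then perturb it locally. The paper instead builds the system from scratch, by designing explicit piecewise-linear building blocks on finite windows $[T_-,T_+]$ with prescribed boundary values $P_j(T_\pm)=a_\pm^j T_\pm$ (determined by sequences $(a_\pm^j)$ subject to \eqref{def-t}--\eqref{def-dist}), proving a local estimate (Lemma~\ref{PropBlock}), and then gluing. Both strategies are in principle plausible, but as written yours has a genuine gap.

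The crux is the step ``then re-enter the template once $g_1$ has reset.'' A Roy $(n+1)$-system is a state-dependent object: the values $\bigl(P_1(t),\dots,P_{n+1}(t)\bigr)$ at the exit time of a modified segment will in general not match $\bP^{*}$'s values at that time, so one cannot simply resume the template. One must design a transition segment that both satisfies the junction condition in Definition~\ref{Roybaby} (the identity $P_{r_1}(t)=\cdots=P_{s_2}(t)$ when $r_1\le s_2$) and steers all $n+1$ components to agree with the template's state, while keeping the ordering $0\le P_1\le\cdots\le P_{n+1}$ and the constraint $\sum_i P_i(t)=t$. Once you try to make that work, you are forced to specify, for each component and each subinterval, which index block is active and where the breakpoints sit --- which is exactly the bookkeeping you defer, and which is the entire content of the paper's proof (the construction of \S\ref{Blocks} and the verification in Lemma~\ref{PropBlock}, then the parameter choice and induction in \S\ref{blocksec}). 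Put differently, the ``main obstacle'' you identify is not a technicality that can be waved through: it is the theorem.

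Two smaller points. First, your claim that $\hat{\omega}_0(\bx^{*})=1/n$ forces $\liminf_{t\to\infty}\bigl(t/(n+1)-P^{*}_1(t)\bigr)=0$ is not correct: $\hat{\omega}_{n-1}(\bx)=n$ is equivalent to $\limsup_{t\to\infty}L_{\bx,1}(t)/t=1/(n+1)$, which constrains a $\limsup$ of a ratio, not the $\liminf$ of the additive discrepancy $g_1(t)$; the latter can be any value in $[0,\infty)$. This does not break your plan (the template simply needs to be shifted), but it should not be invoked as a fact. Second, the cap mechanism must also deliver recurrent equality $g_1(t)=\gamma$, not merely $g_1\ge\gamma$ eventually; arranging that the modified trajectory returns to the level $\gamma$ infinitely often is another piece of unaddressed bookkeeping. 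The uncountability idea (parameterizing excursion heights along a sparse sequence, separated by more than $2C_n$) is sound and is the same in spirit as the paper's parameter $\delta$ in \S\ref{blocksec}.
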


\begin{proof}[Proof of Theorem \ref{thm-main} modulo Lemma \ref{PropSyst}.]
Let us assume Lemma~\ref{PropSyst} and let $\bP: [0,\infty)\rightarrow \RR^{n+1}$ be a Roy $(n+1)$--system coming from the lemma.   In view of the converse part of Theorem \ref{thm-roy}, there exists a point  $\bx\in \RR^n$ such that
  $$\|\bL_{\bx}(t) - \bP(t)\| \le  C_n  \,  .  $$

\noindent Then this together with Lemma \ref{DS} and
\begin{itemize}
 \item \eqref{liminf} implies $\bx\in \DI_n(\eps)\setminus \DI_n(\eps e^{-2C_n})$ and $\bx\notin \Sing_n\!$  ,
  \item \eqref{limsup} implies $\bx\notin \Bad_n \!$ ,
  \item \eqref{d-exponents} implies $\bx\in \cap_{d=0}^{n-1} \bfE_n^d({\tau}_{d})$  .
\end{itemize}
The upshot of this is that
\[ \bx \in \left(\bigcap_{d=0}^{n-1} \bfE_n^d({\tau}_{d}) \cap \Big(\DI_n(\eps) \setminus \DI_n(\eps \e^{-2C_n})\Big)\right)\setminus  (\Bad_n\cup \Sing_n)   \, . \]
Furthermore, Lemma~\ref{PropSyst} implies the existence of {continuum} many such Roy $(n+1)$--systems that are mutually non-equivalent.  Thus, in view of the latter (see Definition \ref{neRS})  each such system gives rise to a different point $\bx \in  \R^n$ and this completes the proof of Theorem~\ref{thm-main}.
\end{proof}

\medskip

The proof of Lemma \ref{PropSyst} will occupy the rest of this section.  It will comprise of  three steps. We start by  constructing Roy $(n+1)-$systems on certain finite intervals which will serve as building blocks for the construction of the desired systems associated with Lemma~\ref{PropSyst}.

\subsection{Building Blocks.}\label{Blocks}

 Let $[T_-, T_+]$ be a subinterval of $[0,\infty)$ with non-empty interior and let $(a_-^j)_{1\le j\le n+1}, (a_+^j)_{1\le j\le n+1}$ be sequences of increasing positive numbers satisfying:
\begin{eqnarray}
  \quad a_+^1T_+ &=& a_-^{n+1}T_- -(n+1) \gamma  \label{def-t}\\[2ex]
  \sum_{1\le j\le n+1} a_*^j &=& 1  \label{def-sum}\\[2ex]
    (a_*^{j+1}-a_*^j)T_* &\ge& 4n^2\gamma       \qquad  \forall \quad  1\le j\le n \label{def-dist} \, ,
\end{eqnarray}
where $\gamma$ is as in  \eqref{yesno} and throughout
$$
* := - \ \rm{or} \ + \, .
$$We now construct a Roy $(n+1)-$system
$$\bP=(P_1,\ldots, P_{n+1}): [T_-, T_+]\rightarrow \RR^{n+1} $$
on $[T_-, T_+]$ associated with the sequences  $(a_-^j)$ and  $(a_+^j)$. With this in mind, let
\begin{eqnarray*}
  R_d &:=& \left(a_-^{n+1} + \cdots + a_-^{d+1} +da_-^{d}\right)T_-  \qquad  \forall \quad 1\le d \le n \\[2ex]
  R_{n+1} &:=& (n+1)a_-^{n+1}T_-- n(n+1) \gamma\\[2ex]
  R_{n+2} &:=& (n+1)a_-^{n+1}T_--(n+1)\gamma\\[2ex]
  S_0 &:=& (n+1)a_+^1 T_{+} +(n^2+n)\gamma \\[2ex]
  S_d &:=& \left(a_+^1 + \cdots + a_+^d +(n+1-d)a_+^{d+1}\right)T_+  \qquad  \forall \quad 1\le d \le n.
\end{eqnarray*}

 \noindent In view of \eqref{def-t}, it is easily seen  that  $S_0=R_{n+2}$.  Also, \eqref{def-sum} ensures that $  T_-=R_1  $  and $S_n=T_+$   while      \eqref{def-dist} gives that $R_{n+1}\ge R_n$ and $S_1\ge S_0$. Since  $(a_*^j)$ is strictly increasing, it thus follows that
\[T_-=R_1<R_2<\cdots<R_{n+2}=S_0<S_1<\cdots<S_n=T_+.\]
\\
 Now set
\[P_j(T_-):=a_-^j T_- \qquad  \forall \quad  1\le j\le n+1   \, .   \]

\noindent For $1\le d \le n-1$, on the interval $[R_{d}, R_{d+1}]$,  let the $d$ components $P_1, \ldots , P_{d}$ coincide  and have slope $1/d$ while the components $P_{d+1}, \ldots , P_{n+1}$ have slope $0$  and
\begin{eqnarray*}
P_j(R_{d+1}):=\begin{cases}
                 a_-^{d+1}T_- & \mbox{if }   \ 1 \le j\le d \\[1ex]
                 a_-^{j} T_-   & \mbox{if } \ d+1 \le j \le n+1 \, .
               \end{cases}
\end{eqnarray*}
On the interval $[R_{n},R_{n+1}]$, let the $n$ components $P_1, \ldots , P_{n}$ coincide and have slope $1/n$  while the component $ P_{n+1}$ has slope $0$ and
\begin{eqnarray*}
P_j(R_{n+1}):=\begin{cases}
                 a_+^{1}T_+ & \mbox{if }   \ 1 \le j\le n \\[1ex]
                 a_-^{n+1} T_-   & \mbox{if } \ j =  n+1 \, .
               \end{cases}
\end{eqnarray*}
On the interval $[R_{n+1},R_{n+2}]$, let the $n-1$ components $P_2, \ldots , P_{n}$ coincide and have slope $1/(n-1)$ while the components  $P_1, P_{n+1}$ have slope $0$ and
\begin{eqnarray*}
P_j(R_{n+2}):=\begin{cases}
                 a_+^{1}T_+ & \mbox{if }   \ j = 1 \\[1ex]
                 a_-^{n+1} T_-   & \mbox{if } \  2 \le j\le n+1
                 \, .
               \end{cases}
\end{eqnarray*}
On the interval $[S_0, S_{1}]$, let the $n$ components $P_2, \ldots , P_{n+1}$ coincide and have slope $1/n$ while the component  $P_1$ has slope $0$ and
\begin{eqnarray*}
P_j(S_1):=\begin{cases}
                 a_+^{1}T_+ & \mbox{if }   \ j = 1 \\[1ex]
                 a_+^{2}T_{+}   & \mbox{if } \  2 \le j\le n+1
                 \, .
               \end{cases}
\end{eqnarray*}
Finally, for $1\le d \le n-1$, on the interval $[S_{d},S_{d+1}]$, let the $n-d$ components $P_{d+2}, \ldots, P_{n+1}$ coincide and have slope $1/(n-d)$ while
the components $P_1 \ldots , P_{d+1}$ have slope $0$  and
\begin{eqnarray*}
P_j(S_{d+1}):=\begin{cases}
                 a_+^{j}T_{+} & \mbox{if }   \ 1 \le j \le d+1  \\[1ex]
                 a_+^{d+2}T_{+} & \mbox{if }   \ d+2 \le j \le n+1
                 \, .
               \end{cases}
\end{eqnarray*}
In particular, since $ S_n = T_+$ it follows that
\[P_j(T_+)=a_+^j T_+   \qquad  \forall \quad  1\le j\le n+1   \, . \]
\\

\noindent Figure \ref{figA} below  represents the combined graph of the functions $P_1,\ldots, P_{n+1}$ over the interval $[T_-,T_{+}]$.  Note that if we set $\gamma=0$ and $a_{+}^j=a_{-}^j$ for $j=1, \ldots , n+1$, our construction reduces to Roy's construction in \cite[Section~5]{Royspec} -- in particular, see  \cite[Figure~5]{Royspec} .

\begin{figure}[!h]
 \begin{center}
 \begin{tikzpicture}[scale=0.3]

\draw[black, semithick] (2,4)--(0,0) node [left,black] {$a_-^1T_-$};
\draw[black, semithick] (2,4)--(0,4) node [left,black] {$a_-^2T_-$};
\draw[black, semithick] (5,7)--(0,7) node [left,black] {$a_-^{n-1}T_-$};
\draw[black, semithick] (9,9)--(5,7);
\draw[black, semithick] (9,9)--(0,9) node [left,black] {$a_-^nT_-$};
\draw[black, semithick] (9,9)--(12,10);
\draw[black, semithick] (12,10)--(37,10) node [right,black] {$a_+^{1}T_+$};
\draw[black, semithick] (12,10)--(14,11);
\draw[black, semithick] (14,11)--(0,11) node [left,black] {$a_-^{n+1}T_-$};
\draw[black, semithick] (23,14)--(14,11);
\draw[black, semithick] (23,14)--(37,14) node [right,black] {$a_+^{2}T_+$};
\draw[black, semithick] (31,18)--(37,18) node [right,black] {$a_+^{n-1}T_+$};
\draw[black, semithick] (31,18)--(35,22);
\draw[black, semithick] (35,22)--(37,26) node [right,black] {$a_+^{n+1}T_+$};
\draw[black, semithick] (35,22)--(37,22) node [right,black] {$a_+^{n}T_+$};

\draw[black, semithick] (23,14)--(25,15);
\draw[black, dotted, semithick] (25,15)--(26,15.5);
\draw[black, dotted, semithick] (29,17)--(28,16.5);
\draw[black, semithick] (29,17)--(31,18);

\draw[black, semithick] (2,4)--(2.5,4.5);
\draw[black, dotted, semithick] (3,5)--(2.5,4.5);
\draw[black, dotted, semithick] (4,6)--(4.5,6.5);
\draw[black, semithick] (5,7)--(4.5,6.5);

\draw[black, dotted] (0,11)--(0,-4) node [below,black] {$T_-=R_1$};
\draw[black, dotted] (2,11)--(2,-2) node [below,black] {$R_2$};
\draw[black, dotted] (5,11)--(5,-2) node [below,black] {$R_{n-1}$};
\draw[black, dotted] (9,11)--(9,-2) node [below,black] {$R_{n}$};
\draw[black, dotted] (12,11)--(12,-4) node [below,black] {$R_{n+1}$};
\draw[black, dotted] (14,11)--(14,-2) node [below,black] {$R_{n+2}$};
\draw[black, dotted] (23,14)--(23,-2) node [below,black] {$S_{1}$};
\draw[black, dotted] (31,18)--(31,-2) node [below,black] {$S_{n-2}$};
\draw[black, dotted] (35,22)--(35,-2) node [below,black] {$S_{n-1}$};
\draw[black, dotted] (37,26)--(37,-4) node [below,black] {$T_+=S_n$};

 \end{tikzpicture}
 \end{center}
 \caption{The constructed Roy $(n+1)-$system on $[T_-,T_+]$.}\label{figA}
 \end{figure}
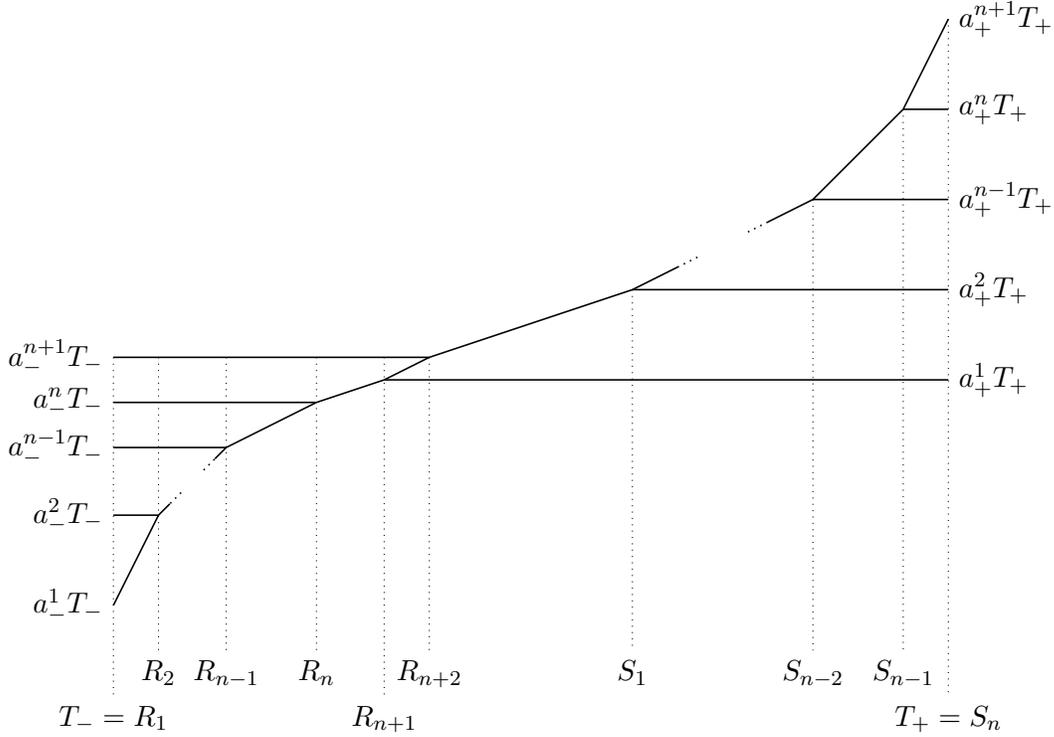

\medskip

 We conclude this section with the following statement.  It provides keys   estimates for  $\bP(t) $ with $t \in  [T_-, T_+]$.

 \begin{lemma}\label{PropBlock}
 The Roy $(n+1)$--system $\bP: [T_-, T_+]\rightarrow \RR^{n+1}$ constructed above satisfies:
 \begin{eqnarray}
    \min_{t\in[T_-, T_+]}\left( \frac{t}{n+1} - P_1(t)\right) &=&  \gamma \label{liminf2}\\[2ex]
   \max_{t\in[T_-, T_+]} \left( \frac{t}{n+1} - P_1(t)\right) &\ge & \left( \frac{1}{n+1} - a_+^1\right)T_+ \label{limsup2}\\[3ex]
    \min_{t\in[T_-, T_+]} \frac{P_1(t)+ \cdots + P_{d}(t)}{t}&=&\min\left\{\sum_{j=1}^d a_-^j, \sum_{j=1}^d a_+^j\right\} \quad(1\le d\le n). \label{d-exponents2}
 \end{eqnarray}
 \end{lemma}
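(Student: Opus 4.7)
The plan is to exploit the piecewise linear nature of $\bP$, analyzing each of the three quantities on the sub-intervals determined by the break points $T_-=R_1<R_2<\cdots<R_{n+1}<R_{n+2}=S_0<S_1<\cdots<S_n=T_+$. For each of the three parts one first identifies, from the explicit slope data, on which subintervals each quantity is monotone, and then reduces the problem to evaluating at the break points where the behavior changes.

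For \eqref{liminf2} the crucial observation is that the slope of $P_1$ equals $1/d$ on $[R_d,R_{d+1}]$ for every $1\le d\le n$, hence is $\ge 1/n>1/(n+1)$ throughout $[T_-,R_{n+1}]$, while on $[R_{n+1},T_+]$ the component $P_1$ is the one that stays fixed, so its slope is identically $0$. Thus $t\mapsto t/(n+1)-P_1(t)$ is strictly decreasing on $[T_-,R_{n+1}]$ and strictly increasing with slope $1/(n+1)$ on $[R_{n+1},T_+]$, so its minimum is attained at $t=R_{n+1}$. Using $P_1(R_{n+1})=a_+^1T_+$ and $R_{n+1}/(n+1)=a_-^{n+1}T_--n\gamma$, the identity \eqref{def-t} collapses this minimum to $(n+1)\gamma-n\gamma=\gamma$. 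Part \eqref{limsup2} then follows from a single evaluation: at $t=T_+$ we have $P_1(T_+)=a_+^1T_+$, giving the lower bound $(1/(n+1)-a_+^1)T_+$ for the maximum.

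For \eqref{d-exponents2} set $Q_d(t):=P_1(t)+\cdots+P_d(t)$. On each linear piece $Q_d(t)=c+st$ so $Q_d(t)/t=s+c/t$ is monotone, and therefore the infimum of $Q_d(t)/t$ on $[T_-,T_+]$ is attained at some break point. Direct evaluation at the two endpoints gives $Q_d(T_-)/T_-=\sum_{j=1}^da_-^j$ and $Q_d(T_+)/T_+=\sum_{j=1}^da_+^j$, so it remains to verify that at every interior break point $R_k$ ($2\le k\le n+2$) and $S_k$ ($1\le k\le n-1$) one has $Q_d/t\ge \min\{\sum_{j=1}^da_-^j,\sum_{j=1}^da_+^j\}$. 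The plan here is to tabulate $Q_d(R_k)$ and $Q_d(S_k)$ using the explicit values of the components at each break point, and then compare the ratios to the endpoint values.

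I expect the main technical obstacle to be this last case analysis. At the break points $R_k$ for $k\le n$ and $S_k$ for $k\ge 1$ the comparison is a purely combinatorial consequence of the monotonicity of the sequences $(a_*^j)$ together with $\sum_ja_*^j=1$ from \eqref{def-sum}. However, at $R_{n+1}$ and $R_{n+2}$ the $\gamma$-correction in \eqref{def-t} enters the formulas, and one must use the large-gap condition \eqref{def-dist}, which guarantees $(a_*^{j+1}-a_*^j)T_*\ge 4n^2\gamma$, to absorb these corrections and still dominate the endpoint minimum. Once this bound is in hand, the three parts of the lemma follow directly.
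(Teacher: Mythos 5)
Your treatment of \eqref{liminf2} and \eqref{limsup2} is exactly the paper's: $P_1$ has slope $1/d \ge 1/n > 1/(n+1)$ on each $[R_d,R_{d+1}]$ and slope $0$ for $t \ge R_{n+1}$, so the minimum of $t/(n+1)-P_1(t)$ sits at $R_{n+1}$, where \eqref{def-t} gives the value $\gamma$; for the maximum a single evaluation at $T_+$ suffices.

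For \eqref{d-exponents2} your route genuinely diverges from the paper's, and the proposal stops short of the part that actually carries the weight. You correctly reduce to break points, but then propose to tabulate $Q_d(R_k)/R_k$ and $Q_d(S_k)/S_k$ and compare each to $\min\{\sum_{j\le d} a_-^j,\ \sum_{j\le d} a_+^j\}$. That comparison is not at all routine at the interior break points (you would, for instance, need to show that $Q_d(R_{n+1})/R_{n+1} = \frac{d\,a_+^1 T_+}{(n+1)(a_+^1 T_+ + \gamma)}$ dominates $\sum_{j\le d} a_-^j$, which is not an immediate manipulation), and you leave it as ``the main technical obstacle'' together with a conjecture that \eqref{def-dist} will be needed to absorb $\gamma$-corrections. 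The paper avoids this altogether: setting $Q_d=P_1+\cdots+P_d$, it studies $D(t):=Q_d'(t)\,t - Q_d(t)$, which is locally constant on each piece since $D'\equiv 0$ there, and $(Q_d/t)' = D(t)/t^2$. A short sign analysis shows $D>0$ on every $(R_j,R_{j+1})$ with $1\le j\le n$ and $D<0$ on every $(S_j,S_{j+1})$ with $0\le j\le n-1$; hence $Q_d/t$ is increasing on $(R_1,R_{n+1})$, decreasing on $(S_0,S_n)$, and monotone on the middle piece $(R_{n+1},S_0)$, so its minimum on $[T_-,T_+]$ must be at one of the two endpoints. This replaces $2n$ value comparisons (some of them delicate) by a handful of clean sign checks, and it shows your worry about the $\gamma$-corrections at $R_{n+1},R_{n+2}$ is unfounded: \eqref{def-dist} only enters to make the partition $R_1<\cdots<R_{n+2}=S_0<\cdots<S_n$ well-ordered, not to rescue the endpoint comparison. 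As written, your proposal has a genuine gap at exactly this point; filling it by the direct tabulation would be possible but substantially more laborious than the monotonicity argument, which I would recommend adopting.
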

 \begin{proof}
 By construction the derivative of the function $P_1$ is strictly greater than $1/(n+1)$ on the interval $[T_-, R_{n+1}]$ and is $0$ on the interval $[R_{n+1}, T_{+}]$. Here and throughout, by the  derivative of a piecewise linear function on a given interval, we mean the derivative on the union of subintervals on which the derivative exists.  It follows that on the interval $[T_-, T_{+}]$, the local minimum  of the function $f: t \to f(t) := t/(n+1)-P_1(t)$ is achieved at $t=R_{n+1}$.  In other words, the  minimum of $f(t)$ on $[T_-, T_{+}]$ is equal to
\[\frac{R_{n+1}}{n+1}-P_1(R_{ n+1})=  a_-^{n+1}T_- -  n\gamma -  a_+^1T_+   \stackrel{\eqref{def-t}}{=}     \gamma  \, .\]
This shows that $\bP$ satisfies \eqref{liminf2}.  On the other hand, it is easily seen that the maximum of $f(t)$ on $[T_-, T_{+}]$ is achieved at either $t=T_-$ or $t=T_+$.  Thus,
$$ {\rm l.h.s. \ of \ } \eqref{limsup2}    \,  \ge    \,  \frac{T_+}{n+1}-P_1(T_+)=  \frac{T_+}{n+1}-  a_+^1 T_+    \,    $$
and this shows that $\bP$ satisfies \eqref{limsup2}.    It remains to prove \eqref{d-exponents2}.   For simplicity, we let $Q_d:=P_1+\cdots+P_d$  $(1\le d\le n)$ and note that to determine when $Q_d(t)/t$ attains its minimum on  $[T_-, T_{+}]$, it suffices to study the function $$D: t \to D(t):=Q'_d(t)t-Q_d(t) \,  .$$
 On each connected open interval where $\bP$ is differentiable,  it is easily verified that the derivative  $D^{'}(t) = 0$ and so $D(t)$ is constant. Hence, it suffices to study the quantities  $D^*(R_j)$ and $D^*(S_j)$  for $1\le j\le n$. This we now do  systematically.   Recall that for a piecewise continuous function $D$,
\[D^+(t):=\lim_{\substack{ s\rightarrow t,\\ s>t}} D(s), \quad\text{ and }\quad D^-(t):=\lim_{\substack{s\rightarrow t,\\ s<t}} D(s).\]

\begin{itemize}
  \item For $1\le j \le d$, on the interval $(R_j, R_{j+1})$,  the derivative of $Q_d$ equals $1$.  Hence $D(t)$ is positive on this interval.
  \item For $d+1\le j\le n$, on the interval $(R_j, R_{j+1})$, the derivative of $Q_d$ equals $d/j$. A direct computation shows that
  \begin{align*}
  jD^+(R_j) &= dR_j-jQ_d(R_j)\\[1ex]
  &= d\left(a_-^{n+1} + \cdots + a_-^{j+1} +ja_-^{j}\right)T_- -jda_-^{j}T_-  \\
  &>0
\end{align*}
Hence $D(t)$ is positive on this interval.
\item For $0\le j\le d-1$, on the interval $(S_j, S_{j+1})$, the derivative of $Q_d$ equals $(d-j-1)/(n-j)$. A direct computation shows that
\begin{align*}
    (n-j)D^-(S_{j+1})
  &=(d-j-1)S_{j+1}-(n-j)Q_d(S_{j+1})\\[2ex]
  &=(d-j-1)\left(a_+^{1} + \cdots + a_+^{j+1} +(n-j)a_+^{j+2}\right)   \,  T_+\\
  &\qquad \ -(n-j)\left(a_+^{1} + \cdots + a_+^{j+1} +(d-j-1)a_+^{j+2}\right) \,  T_+  \\[1ex]
  &= (d-n-1)\left(a_+^{1} + \cdots + a_+^{j+1} \right)  \,  T_+\\
  &<0
\end{align*}
Hence $D(t)$ is negative on this interval.
\item For $d\le j\le n-1$, on the  interval $(S_j, S_{j+1})$, the derivative of $Q_d$ equals $0$. Hence $D(t)$ is negative on this interval.
\end{itemize}
In conclusion, the function $Q_d(t)/t$ increases on the interval $(R_1, R_{n+1})$ and decrease on the interval $(S_0, S_n)$. As $Q_d(t)/t$ is monotonic on $(R_{n+1}, S_0)$, the minimum is thus  attained at either $t=T_-$ or $t=T_+$. Hence,
$$ {\rm l.h.s. \ of \ } \eqref{d-exponents2}    \,  =   \,  \min\left\{ \frac{Q_d(T_-)}{T_-}, \frac{Q_d(T_+)}{T_+} \right\} =  \min\left\{\sum_{j=1}^d a_-^j, \sum_{j=1}^d a_+^j\right\}   \,    $$
and this shows that $\bP$ satisfies \eqref{d-exponents2} which in turn completes the proof of the lemma.
\end{proof}

\subsection{The local construction on blocks. \label{blocksec}}

In this section, will will exploit the generic construction presented in  \S\ref{Blocks}  to essentially  prove a `local' version of  Lemma~\ref{PropSyst}.  More precisely,   we will construct a family of Roy $(n+1)-$systems $\bP^{\delta}$ on certain subintervals $I$ of $[0,\infty)$   all satisfying the properties of Lemma~\ref{PropBlock}.  Here $\delta\in[0,1/(32n^2))$ is a parameter and for $\delta'\neq \delta$, we show that the intervals $I$ can be chosen so that the Roy $(n+1)-$systems $\bP^{\delta}$ and $\bP^{\delta'}$ on $I$  are mutually non-equivalent.  The construction consists of five short steps. Throughout,  $ n \ge 2$ and
   $\tau_{0}, \ldots, \tau_{n-1} \in [0, \infty]$  are   the real numbers appearing in  Theorem~\ref{thm-main} satisfying  \eqref{gugdsv}.\\

   \noindent {\em Step 1.\ } For $1\le i\le n-1$ and $1\le j\le n+1$, let
 \begin{eqnarray*}
\alpha^{i,j}:=\begin{cases}
                 i^{-1}(1+\tau_{n-i})^{-1}  &  \ \mbox{if }   \ j\le i \\[1ex]
                 (1+\tau_{n-i-1})^{-1}-(1+\tau_{n-i})^{-1}  & \  \mbox{if } \ j=i+1 \\[1ex]
                 (n-i)^{-1}\tau_{n-i-1}(1+\tau_{n-i-1})^{-1}  & \ \mbox{if } \ j>i+1
               \end{cases}
\end{eqnarray*}
where $(1+\tau_{j})^{-1}=0$ and $\tau_j(1+\tau_{j})^{-1}=1$ if $\tau_{j}=\infty$. The following statement summarises useful properties of the associated   sequence
$(\alpha^{i,j})$ that we shall later exploit.
  %(cf.  \cite[Section~6]{Royspec}).
  \\

\begin{lemma} \label{niceone} Let $(\alpha^{i,j})$ be given as above, then
  \begin{itemize}
\item[(a)] for any  $1\le i\le n-1$, $\sum_{1\le j\le n+1} \alpha^{i, j}=1$,
\item[(b)] for any $1\le i\le n-1$ and $1\le j\le j'\le n+1$, $\alpha^{i,j}\le \alpha^{i,j'}$,
\item[(c)] $\alpha^{i,1}+\cdots+\alpha^{i,j}\ge (1+\tau_{n-j})^{-1}$ with equality holds when $j=i, i+1$.
  \end{itemize}
\end{lemma}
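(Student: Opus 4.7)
My plan is to dispatch the three parts separately, in each case reducing to an elementary rearrangement that rests on the Laurent transference inequalities \eqref{gugdsv}.

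For part (a), I would just sum $\alpha^{i,j}$ over the three regions in the definition. The first $i$ terms contribute $(1+\tau_{n-i})^{-1}$, the $(i+1)$-st contributes $(1+\tau_{n-i-1})^{-1}-(1+\tau_{n-i})^{-1}$, and the final $n-i$ terms contribute $\tau_{n-i-1}(1+\tau_{n-i-1})^{-1}$. These telescope to $1$.

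For part (b), $\alpha^{i,j}$ is constant in $j$ on each of the three regions, so only the two transitions $j=i\to i+1$ and $j=i+1\to i+2$ need attention. Clearing denominators in $\alpha^{i,i}\le \alpha^{i,i+1}$ produces $(i+1)(1+\tau_{n-i-1})\le i(1+\tau_{n-i})$; doing the same for $\alpha^{i,i+1}\le \alpha^{i,i+2}$ produces $\tau_{n-i-1}(1+\tau_{n-i})\ge (n-i)(\tau_{n-i}-\tau_{n-i-1})$. Setting $d=n-i$, these are respectively the right- and left-hand inequalities of \eqref{gugdsv}.

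For part (c), write $h(d):=1/(1+\tau_d)$ and $S^i_j:=\alpha^{i,1}+\cdots+\alpha^{i,j}$. Direct summation gives $S^i_j=(j/i)h(n-i)$ for $j\le i$, $S^i_{i+1}=h(n-i-1)$, and $S^i_j=h(n-i-1)+\tfrac{j-i-1}{n-i}(1-h(n-i-1))$ for $j>i+1$; the equality cases at $j=i$ and $j=i+1$ are then immediate. When $j<i$ the desired bound reduces to $j(1+\tau_{n-j})\ge i(1+\tau_{n-i})$, i.e.\ to the monotonicity of $g(d):=(n-d)(1+\tau_d)$, which is just a rearrangement of the right inequality in \eqref{gugdsv}.

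The substantive case is $j>i+1$. Setting $p:=n-j$ and $q:=n-i-1$ (so $p<q$), the goal reduces after clearing denominators to
\[
(q+1)\,h(p)-(p+1)\,h(q)\le q-p.
\]
I would first verify the base step $q=p+1$, i.e.\ $(p+2)h(p)-(p+1)h(p+1)\le 1$, which rearranges to the left inequality of \eqref{gugdsv} applied at $d=p+1$. The general case I plan to obtain by a weighted telescoping: multiply the base inequality at each $d\in\{p,p+1,\ldots,q-1\}$ by the weight $w_d:=(p+1)(q+1)/\big((d+1)(d+2)\big)$ and sum. The left-hand sides collapse by design to $(q+1)h(p)-(p+1)h(q)$, while $\sum_d w_d=q-p$ follows from the partial-fraction identity $1/\big((d+1)(d+2)\big)=1/(d+1)-1/(d+2)$. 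Spotting this weight is the only mildly non-obvious step; the rest is algebraic bookkeeping.
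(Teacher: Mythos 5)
Your proof is correct, and in parts (a), (b) and the $j\le i+1$ cases of (c) it coincides with the paper's argument modulo the cosmetic change of variable: the paper substitutes $\theta_i := (1+\tau_{n-i})^{-1}$ and records \eqref{gugdsv} as the pair of monotonicity statements
\[
\frac{\theta_i}{i}\le \frac{\theta_{i+1}}{i+1}
\qquad\text{and}\qquad
\frac{1-\theta_i}{n+1-i}\le \frac{1-\theta_{i+1}}{n-i},
\]
after which (b) and the $j<i$ case of (c) drop out in one line each; your ``clearing denominators'' and ``monotonicity of $g(d)$'' are just this in different clothing.

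The one place you genuinely diverge is the $j>i+1$ case of (c). You reduce to the inequality $(q+1)h(p)-(p+1)h(q)\le q-p$ and prove it by a weighted telescoping with weights $w_d=(p+1)(q+1)/\big((d+1)(d+2)\big)$, relying on the partial-fraction identity for $1/\big((d+1)(d+2)\big)$. This is correct (I checked: the left-hand sides collapse and the weights sum to $q-p$), but it is heavier than necessary. The paper instead rewrites the partial sum in complementary form,
\[
S^i_j \;=\; 1-\frac{(n+1-j)(1-\theta_{i+1})}{n-i},
\]
and then the bound $S^i_j\ge \theta_j$ is immediate from the increasing monotonicity of the sequence $(1-\theta_i)/(n+1-i)$, which is exactly the second inequality above iterated from $i+1$ up to $j$. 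Your target inequality $(q+1)h(p)-(p+1)h(q)\le q-p$ is, after rearrangement, precisely this monotonicity statement, so you have in effect re-derived a chained monotonicity via a less direct route. Both are valid; the paper's version is the one I would recommend keeping, since once \eqref{EquivGUGD} is set up the $j>i+1$ case is a one-line consequence rather than a separate telescoping construction.

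One small point worth flagging if you write this up: part (c) only makes sense for $j\le n$ (since $\tau_{n-j}$ is undefined for $j=n+1$), and when some $\tau_d=\infty$ you should note that the conventions $(1+\tau_d)^{-1}=0$ and $\tau_d(1+\tau_d)^{-1}=1$ keep every step of the argument valid, including the base inequality of your telescoping.
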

\begin{proof}
  Part (a) follows directly from the definition.  To prove the other parts,  for $1\le i\le n$ let  $$\theta_i=(1+\tau_{n-i})^{-1} \, . $$ Then  it follows that \eqref{gugdsv} is equivalent to
 \[\frac{(n-d+1) \, \theta_{n-d}}{n-d}  \ \le  \ \theta_{n-d+1} \ \le  \ \frac{1+d\theta_{n-d}}{d+1}   \qquad  \forall \quad  1\le d\le n-1  , \]
 which in turn is equivalent to
 \begin{equation}\label{EquivGUGD}
 \frac{\theta_i}{i}  \ \le \  \frac{\theta_{i+1}}{i+1} \qquad\text{ and }\qquad \frac{1-\theta_i}{n+1-i} \  \le  \  \frac{1-\theta_{i+1}}{n-i}  \qquad  \forall \quad 1\le i\le n-1.
 \end{equation}

 \noindent To prove part (b), it suffices to show that
 \[\frac{\theta_i}{i}  \ \le \  \theta_{i+1}-\theta_i  \ \le  \  \frac{1-\theta_{i+1}}{n-i} \qquad  \forall \quad   1\le i\le n-1 \, . \]
 This follows directly from \eqref{EquivGUGD}.  It remains to part (c). When $j\le i$, on appropriately  iterating the first inequality of  \eqref{EquivGUGD}, it follows that
 \[\alpha^{i,1}+\cdots+\alpha^{i,j} \ = \ \frac{j \, \theta_i}{i} \ \ge \  \theta_j.\]
 When $j=i$ or $j=i+1$, the statement with equality is easily checked. When $j>i+1$, on appropriately  iterating the second inequality of  \eqref{EquivGUGD},  it follows that
\begin{eqnarray*}
\alpha^{i,1}+\cdots+\alpha^{i,j} \; = \; \theta_{i+1}+\frac{(j-i-1)(1-\theta_{i+1})}{n-i} & =  & 1-\frac{(n+1-j)(1-\theta_{i+1})}{n-i} \\[2ex]
& \ge   &   1-(1-\theta_j)  \ = \ \theta_j.
\end{eqnarray*}
 This completes the proof of the lemma.
\end{proof}

\medskip

\noindent {\em Step 2.\ } Having chosen the sequence $(\alpha^{i,j})$ as above, the second step involves  choosing  a sequence of positive real numbers
\begin{equation} \label{godseq} \left\{\beta_k^{i,j}:\  1\le i\le n-1,\  1\le j\le n+1,\  k\ge 1\right\}
\end{equation}
such that for any $k\ge 1$ and $1\le i\le n-1$:

\begin{eqnarray}
  \sum_{1\le j\le n+1} \beta_k^{i,j}&=& 1      \label{sum-a}\\[1ex]
  \beta_k^{i,j+1}-\beta_k^{i,j} &\ge& \frac{1}{4n^2k}  \quad\text{ for all }  1\le j\le n, \label{difference-a}\\[2ex]
  \beta_k^{i,1}&\in& \left[\frac{1}{(k+1)(n+1)}, \frac{k}{(k+1)(n+1)}\right],   \label{def-alpha}\\[2ex]
  \beta_k^{i,n+1}&\in& \left[\frac{k+3}{(k+1)(n+1)}, 1-\frac{1}{(k+1)(n+1)}\right] \label{def-alpha2}\\[3ex]
   \lim_{k\rightarrow \infty} \beta_k^{i,j} &=& \alpha^{i,j}  \qquad  \forall \quad  1\le j\le n+1 .  \label{limit-a}
\end{eqnarray}\\

\noindent Note that parts (a) and (b) of Lemma~\ref{niceone}  guarantees the existence of such a sequence.   For instance, they imply that  for any $1\le i\le n-1  $
$$
\alpha^{i,1} \leq 1/n \qquad {\rm and } \qquad \alpha^{i,n+1} \ge 1/n \, .
$$
Thus  the conditions   \eqref{def-alpha}, \eqref{def-alpha2} and \eqref{limit-a} are  compatible. \\

\noindent {\em Step 3.\ }  Now, the third step  is to let
\begin{equation}\label{def-t2sv} T_1:=128n^4\gamma
\end{equation}
and  then define inductively $T_k^i$ for $k\ge 1$ and $1\le i\le n-1$ as follows:
\begin{equation}\label{def-t2}
  T_k^1\, :=  \, T_k \, ,  \ \quad \beta_k^{i+1, 1}T_k^{i+1} \, :=  \, \beta_k^{i, n+1}T_k^i-(n+1)\gamma \, , \ \quad T_k^n\, := \, T_{k+1},
\end{equation}
where we set
\begin{equation}\label{def-beta-n}
  \beta_k^{n, j}=\beta_{k+1}^{1,j}    \qquad  \forall \quad   k\ge 1 \quad {\rm and } \quad  1\le j\le n+1.
\end{equation}

\noindent Observe that for any $k\ge 1$ and $ 1\le i\le n-1$, it follows via  \eqref{def-alpha},    \eqref{def-alpha2} and  \eqref{def-t2} that
\begin{eqnarray}  \label{needed}
  T_{k}^{i+1} &=& (\beta_{k}^{i+1,1})^{-1}\left( \beta_k^{i, n+1} \, T_k^{i}-(n+1)\gamma\right)  \nonumber \\[2ex]
   &\ge& \frac{(k+2)(k+3)}{(k+1)^2}  \, T_k^i-   \frac{(n+1)^2(k+2)\gamma}{k+1} \nonumber  \\[2ex]
   &>& \frac{(k+2)^2}{(k+1)^2}\, T_k^i+\left(\frac{T_k^i - (n+1)^2(k+2)\gamma}{k+1}\right)  \, .
\end{eqnarray}

\noindent In turn, on arguing by induction, it follows that for any $k\ge 1$:\\
\begin{equation}\label{incsv}
  T_k^{i+1} \  >  \ T_k^{i} \qquad  \forall \quad  1\le i\le n-1
\end{equation}
and
\begin{equation}\label{e-value-t}
  T_k \ \ge \ 32 \,  n^4(k+1)^{2}\gamma.
\end{equation}

\noindent Indeed,  let $k=1$.   Then  \eqref{e-value-t} holds in view of \eqref{def-t2sv}. To prove \eqref{incsv} we use induction on $i$. With this and \eqref{needed} in mind, when $i=1$   it follows via $\eqref{e-value-t}$ and the fact that $T_1^1 :=T_1$, that
\begin{equation}  \label{needed2}
T_1^1 - (n+1)^2 3 \gamma  \;  \ge  \;  128n^4 \gamma  \, - \, 3 (n+1)^2 \gamma   \, > \, 0  \, .
\end{equation}
Hence,  \eqref{needed} implies that $ T_1^2 > T_1^1 $. In other words,   \eqref{incsv} holds for $i=1$.   So suppose \eqref{incsv}  holds for $i$ with $ i \le n-2$. Then,  it follows via  \eqref{needed2}  that
$$
T_1^{i+1} - (n+1)^2 3\gamma \;  > \; T_1^1 - (n+1)^2 3 \gamma  \;  \ >   \; 0
$$
and so \eqref{needed} implies that $T_1^{i+2} > T_1^{i+1} $.  This shows that \eqref{incsv}  holds with $k=1$.    Now  assume  that \eqref{incsv} and \eqref{e-value-t} holds for $k$.   Then, it follows via   \eqref{needed} with $i=n-1$  and  the fact that $ T_k^n\, := \, T_{k+1}$ and $ T_k^1\, := \, T_{k}$, that
 \begin{eqnarray*}
  T_{k+1} &>& \frac{(k+2)^2}{(k+1)^2}\, T_k^{n-1}+\left(\frac{T_k^{n-1} - (n+1)^2(k+2)\gamma}{k+1}\right)\\[2ex]
   &>& \frac{(k+2)^2}{(k+1)^2}\, T_k+\left(\frac{T_k - (n+1)^2(k+2)\gamma}{k+1}\right) \\[2ex]
   &>& 32 n^4(k+2)^{2}\gamma  \, .
\end{eqnarray*}
This  shows that \eqref{e-value-t} holds for $k+1$ and we now use this to show that \eqref{incsv} holds for $k+1$.  With this in mind, when $i=1$  it  follows that
\begin{equation}  \label{needed22}
T_{k+1}^1 - (n+1)^2(k+3)\gamma \;  \ge  \;  32 n^4(k+2)^{2}\gamma  \, - \, (n+1)^2(k+3)\gamma   \, > \, 0
\end{equation}
 and so \eqref{needed} implies that $ T_{k+1}^2 > T_{k+1}^1 := T_{k+1}$. In other words,   \eqref{incsv} holds for $k+1$ with  $i=1$.   So suppose \eqref{incsv}  holds for $k+1$ with $ i \le n-2$. Then it follows via  \eqref{needed22}  that
$$
T_{k+1}^{i+1} - (n+1)^2 3\gamma \;  > \; T_{k+1}^1 - (n+1)^2(k+3)\gamma  \;  \ >   \; 0
$$
and so \eqref{needed} implies that $T_{k+1}^{i+2} > T_{k+1}^{i+1} $. This thereby completes the inductive step and hence  establishes   \eqref{incsv} and \eqref{e-value-t} for all $k \ge 1$. \\

 Now, with the generic construction of $\S\ref{Blocks} $ in mind,  for $k\ge 1$, $1\le i \le n-1$, let
 $$T_-=T_k^i     \qquad {\rm and }  \qquad T_+=T_k^{i+1}  \, , $$
 and  for  $1\le j \le n+1$,  let
 $$a_-^j  = \beta_k^{i,j}   \qquad {\rm and }  \qquad  a_+^j = \beta_k^{i+1,j}  \, . $$ Then, it is readily verified on using  \eqref{sum-a}, \eqref{difference-a},  \eqref{limit-a}, \eqref{def-t2}, \eqref{incsv} and \eqref{e-value-t} that conditions  \eqref{def-t}, \eqref{def-sum} and \eqref{def-dist} are satisfied.
The upshot is that the construction described within \S\ref{Blocks} is applicable and  gives rise to a
Roy $(n+1)$--system $\bP: [T_k^i, T_k^{i+1}]\rightarrow \RR^{n+1}$  associated with the sequences $( \beta_k^{i,j})$  defined via \eqref{godseq} and $(T_k^i)$  defined via \eqref{def-t2}.  Moreover, for each $k  \ge 1$, $1\le i \le n-1$,  the  Roy $(n+1)$--system $\bP$ on  the interval $[T_k^i, T_k^{i+1}]$  satisfies Lemma~\ref{PropBlock}.

\begin{remark}
 As we shall see in the next section, it is not difficult to extend this local statement  to a Roy $(n+1)$--system $\bP$ on  the interval $[0, \infty)$  that satisfies Lemma~\ref{PropSyst}.  Note that this would suffice if all we wanted to show was that the sets appearing in Theorem~\ref{thm-main}  are non-empty rather than {continuum}.
\end{remark}

 \medskip

 \noindent {\em Step 4.\ } The  fourth step involves perturbing the above construction  of the  Roy $(n+1)$--system $\bP$ on $ [T_k^i, T_k^{i+1}]$ by  a parameter $\delta$ in such a way that:
  \begin{itemize}
    \item the properties of Lemma~\ref{PropBlock} are satisfied  for the perturbed  Roy $(n+1)-$system $\bP^{\delta}: [T_k^i, T_k^{i+1}]\rightarrow \RR^{n+1}\!$,  and
    \item  for $\delta'\neq \delta$, the perturbed Roy $(n+1)-$systems $\bP^{\delta}$ and  $\bP^{\delta'}$ are mutually non-equivalent (see Definition~\ref{neRS}).
  \end{itemize}

\noindent With this in mind,  let   $( \beta_k^{i,j})$  and $(T_k^i)$ be the sequences  given by   \eqref{godseq} and  \eqref{def-t2} respectively, and let
\begin{equation} \label{deltabd}
\delta\in \big[0, 1/32n^2\big)  \, .
\end{equation}  Now define the  new sequence
\begin{equation} \label{vgodseq} \left\{\beta_k^{i,j}(\delta):  \  1\le i\le n-1,\  1\le j\le n+1,\  k\ge 1\right\}
\end{equation}
by setting, for any  $k\ge 1$ and $1\le i\le n-1$:
\begin{eqnarray}
% \nonumber % Remove numbering (before each equation)
   \beta_1^{1,1}(\delta) &:=&  \beta_1^{1,1},  \label{justincase} \\[2ex]
  \beta_k^{i,n+1}(\delta) &:=&  \beta_k^{i,n+1}+\frac{\delta}{k}, \label{e-delta1}\\[2ex]
\beta_k^{i+1,1}(\delta)T_k^{i+1} &:=& \beta^{i, n+1}_k(\delta)T_k^i-(n+1)\gamma, \label{e-delta2} \\[2ex]
  \beta_k^{i,1}(\delta)+\beta_k^{i, 2}(\delta)+\beta_k^{i,n+1}(\delta) &:=&
  \beta_k^{i,1}+\beta_k^{i, 2}+\beta_k^{i,n+1} \label{e-delta3},\\[2ex]
  \beta_k^{i,j}(\delta)&:=& \beta_k^{i,j}  \qquad  \forall \quad  3\le j\le n  \, .  \label{e-delta4}
\end{eqnarray}
Also, in line with \eqref{def-beta-n}, we let
\[  \beta_k^{n, j}(\delta) \ := \ \beta_{k+1}^{1,j}(\delta) \qquad  \forall \quad   k\ge 1 \quad {\rm and } \quad  1\le j\le n+1. \]
Clearly, the  sequences  $(\beta_k^{i,j}(\delta)) $ and $(\beta_k^{i,j})$ coincide  when $\delta=0$. An immediate consequence of \eqref{sum-a}, \eqref{e-delta3} and \eqref{e-delta4} is that
\begin{equation}\label{sum-delta}
  \sum_{j=1}^{n+1} \beta_k^{i,j}(\delta)=1.
\end{equation} \\
Also note that in view of \eqref{def-t2}, \eqref{e-delta1} and \eqref{e-delta2}, we have that
\[\left(\beta_k^{i+1,1}(\delta)-\beta_k^{i+1,1}\right)T_k^{i+1}  \, =  \,
\left(\beta_k^{i,n+1}(\delta)-\beta_k^{i,n+1}\right)T_k^i  \, = \, \frac{\delta}{k}T_k^i \, ,\]
from which it follows that
\[\beta_k^{i,1}  \ \le  \  \beta_k^{i,1}(\delta)  \ \le  \  \beta_k^{i,1}+\frac{\delta}{k} \qquad  \forall \quad 2\le i\le n-1  \] and
\begin{equation}\label{lalji}
\beta_{k+1}^{1,1}   \ \le  \  \beta_{k+1}^{1,1}(\delta)  \ \le   \ \beta_{k+1}^{1,1}+\frac{\delta}{k} \, .
\end{equation}
To sum up, for all $k\ge 1$ and $1\le i\le n-1$  we have  that
\begin{equation}\label{e-ine-20}
 \beta_k^{i,1}\le \beta_k^{i,1}(\delta)\le \beta_k^{i,1}+\frac{2\delta}{k}.
\end{equation}
Combining \eqref{e-ine-20} and \eqref{e-delta3}, we get for all $k\ge 1$ and $1\le i\le n-1$,
\begin{equation}\label{e-ine-21}
  \beta_k^{i, 2}-\frac{3\delta}{k}\le \beta_k^{i, 2}(\delta)\le \beta_k^{i, 2}.
\end{equation}\\

 Now, with the generic construction of $\S\ref{Blocks} $ in mind,  for $k\ge 1$, $1\le i \le n-1$, let
 $$ T_-=T_k^i     \qquad {\rm and }  \qquad T_+=T_k^{i+1}  \, , $$
 and  for  $1\le j \le n+1$,  let
 $$a_-^j  = \beta_k^{i,j}(\delta)   \qquad {\rm and }  \qquad  a_+^j = \beta_k^{i+1,j} (\delta) \, . $$ Then, it is readily verified
that condition \eqref{def-t} follows from \eqref{e-delta2}  and that  condition \eqref{def-sum} follows from \eqref{limit-a} and \eqref{sum-delta}. To show \eqref{def-dist},  first note that for all $k\ge 1$, $1\le i\le n-1$ and $j\ne 1$, on using \eqref{e-delta1}, \eqref{e-delta4} and  \eqref{e-ine-21}, it follows that
\[\left( \beta_k^{i,j+1}(\delta)-\beta_k^{i,j}(\delta) \right)  T_k  \ge \left(\beta_k^{i,j+1}-\beta_k^{i,j} \right) T_k   \, . \]
We have already shown that the right hand side satisfies \eqref{def-dist}.
When $j=1$, it is readily verified, on using   \eqref{difference-a},   \eqref{e-value-t}, \eqref{deltabd},  \eqref{e-delta1} and  \eqref{e-ine-21}, that for all $k\ge 1$ and $1\le i\le n-1$
\[\left(\beta_k^{i,2}(\delta)-\beta_k^{i,1}(\delta)\right)T_k^i\ge \left(\beta_k^{i,2}-\beta_k^{i,1}-\frac{4\delta}{k}\right)T_k^i \ge \frac{T_k}{8n^2k}\ge 4n^2\gamma.\]
The upshot is that the construction described within \S\ref{Blocks} is applicable and  gives rise to a
Roy $(n+1)$--system $\bP^\delta: [T_k^i, T_k^{i+1}]\rightarrow \RR^{n+1}$  associated with the constant $\delta$ satisfying \eqref{deltabd} and sequences $( \beta_k^{i,j}(\delta))$  defined via \eqref{vgodseq} and $(T_k^i)$  defined via \eqref{def-t2}.  Moreover, for each $k  \ge 1$, $1\le i \le n-1$,  the  Roy $(n+1)$--system $\bP^\delta$ on  the interval $[T_k^i, T_k^{i+1}]$  satisfies Lemma~\ref{PropBlock}.
\\

 \medskip

 \noindent {\em Step 5.\ }   It remains to show that the  Roy $(n+1)$--systems constructed in Step~4 are mutually non-equivalent.  This is easily done. Let $\delta$ and $\delta'$ satisfy \eqref{deltabd} and suppose $\delta'\ne\delta$.
 Then it is readily verified, that
\[\left|P_{n+1}^{\delta'}(T_k)-P_{n+1}^{\delta}(T_k)\right|   \  \stackrel{\eqref{e-delta1}}{=} \   \frac{|\delta'-\delta|  \; T_k}{k}   \ \stackrel{\eqref{e-value-t}}{\ge}  \  |\delta'-\delta|32n^4k\gamma  \ > \ 2C_n \,  \]
 for all $k> k_0$ sufficiently large.  By definition, this implies that for any $k>k_0$, the Roy $(n+1)-$systems $\bP^{\delta}$ and $\bP^{\delta'}$ on the interval  $[T_k, T_{k+1}]$ are mutually non-equivalent.

\subsection{Proof of Lemma \ref{PropSyst}.}\label{PfPropSyst}

The proof of the   Lemma \ref{PropSyst} will follow on  extending  the local construction of the  Roy $(n+1)-$systems $\bP^{\delta}$ on the  intervals $[T_k^i, T_k^{i+1}]$ presented in  \S\ref{blocksec}  to  the interval $[0, \infty)$.  With this in mind, for $\delta$ satisfying  \eqref{deltabd} and $k\ge 1$,  $1\le i \le n-1$,  let us denote by $\bP^{\delta}_{k,i}$ the Roy $(n+1)-$system  on  $[T_k^i, T_k^{i+1}]$.  Now observe that
\[[T_1, \infty)=\bigcup_{k\ge 1}   \  \bigcup_{1\le i\le n-1} \,  [T_k^i, T_k^{i+1}].\]
where  $T_1$  is given by \eqref{def-t2sv}.  It therefore follows that the continuous piecewise linear map $\bP^{\delta}=(P_1^{\delta},\ldots, P_{n+1}^{\delta}): [T_1,\infty) \rightarrow \RR^{n+1}$ given by
$$
\bP^{\delta}(t) :=  \bP^{\delta}_{k,i}(t)  \qquad  {\rm for }  \quad   t \in [T_1,\infty)  \, ,
$$
is a  Roy $(n+1)$--system on $[T_1,\infty)$.
It remains to extend   $\bP^\delta$ to the interval $[0, T_1]$.  For this let
\[  S_{d+1} := \left(\beta_1^{1, 1}(\delta) + \cdots + \beta_1^{1, d}(\delta) +(n+1-d)\beta_1^{1, d+1}(\delta)\right)T_1 \qquad  \forall \quad   0\le d \le n,\]
and let
$$
P_j^\delta(0):= 0 \qquad  \forall \quad  1\le j\le n+1  \, .
$$
On the interval $[0,S_{1}]$, let the $n+1$ components $P_1^\delta, \ldots , P_{n+1}^\delta$ coincide and have slope $1/(n+1)$. It follows that
$$
P_j^\delta(S_1):= \beta_1^{1, 1}(\delta) T_1 \qquad  \forall \quad  1\le j\le n+1  \, .
$$
For $1\le d \le n$, on the interval $[S_{d},S_{d+1}]$, let the $n+1-d$ components $P_{d+1}^\delta, \cdots, P_{n+1}^\delta$ coincide and have slope $1/(n+1-d)$ while
the components $P_1^\delta \ldots , P_{d}^\delta$ have slope $0$  and
\begin{eqnarray*}
P_j^\delta(S_{d+1}):=\begin{cases}
                 \beta_1^{1, j}(\delta) \, T_1 & \mbox{if }   \ 1 \le j\le d \\[1ex]
                 \beta_1^{1, d+1}(\delta) \, T_1   & \mbox{if } \ d+1 \le j \le n+1 \, .
               \end{cases}
\end{eqnarray*}
In particular, by \eqref{sum-delta}  we have  that $ S_{n+1} =  T_1$ and so it follows that
\[P_j^\delta(T_1)=\beta_1^{1, j}(\delta) T_1    \qquad  \forall \quad  1\le j\le n+1   \, . \]
In short, this coincides with left hand side of Figure~1 with  $T_-=T_1$ and $a_-^j = \beta_1^{1,j}(\delta)$ $(1\le j\le n+1)$.  The upshot is that the above construction enables us to extend  in then obvious manner  the Roy $(n+1)$--system on $[T_1,\infty)$ to $[0,\infty)$.   We now show that the Roy $(n+1)-$system $\bP^\delta : [0, \infty)\rightarrow \RR^{n+1}$ satisfies the desired properties of Lemma \ref{PropSyst}.  \\

\noindent  \  \ $\bullet$ \ By construction, \eqref{liminf} follows directly from \eqref{liminf2}.\\

\noindent  \  \ $\bullet$ \ In view of  \eqref{limsup2},   \eqref{def-alpha},  \eqref{e-value-t} and \eqref{lalji}, it follows that
\begin{align*}
  \limsup_{t \to \infty}  \left( \frac{t}{n+1} - P_1^{\delta}(t)\right)
   &= \limsup_{k \to \infty} \  \max_{1\le i\le n}  \ \max_{t \in[T_k^i,T_{k}^{i+1}]} \left( \frac{t}{n+1} - P_1^{\delta}(t)\right) \\[2ex]
   &\ge \limsup_{k \to \infty}   \ \max_{t \in[T_k^{n-1},T_{k}^{n}]} \left( \frac{t}{n+1} - P_1^{\delta}(t)\right) \\[2ex]
  &\ge \limsup_{k\rightarrow \infty} \left(\frac{1}{n+1}- \beta_{k}^{n,1}(\delta) \right)  T_{k}^n  \\[2ex]
  &\ge \limsup_{k\rightarrow \infty}\left(\frac{1}{n+1}-\beta_{k+1}^{1,1}-\frac{\delta}{k} \right) T_{k+1}\\[2ex]
  &\ge \limsup_{k\rightarrow \infty}\left(\frac{1}{n+1}-\frac{k+1}{(k+2)(n+1)}-\frac{1}{32 n^2 k} \right) T_{k+1}\\[2ex]
  &\ge \limsup_{k\rightarrow \infty} n^3 (k+2) \gamma    \    =  \  \infty  \, .
\end{align*}
This shows that $\bP^{\delta}$ satisfies \eqref{limsup}.\\

\noindent  \  \ $\bullet$ \  Let $1\le d\le n$. Then, in view of  \eqref{d-exponents2}, \eqref{limit-a} and part (c) of Lemma~\ref{niceone}, it follows that
\begin{align*}
 \liminf_{t \to \infty}  \frac{P_1^{\delta}(t)+ \cdots + P_d^{\delta}(t)}{t}
 &= \liminf_{k \to \infty}  \ \min_{1\le i\le n} \ \min_{t \in[T_k^i,T_{k}^{i+1}]} \left(\frac{P_1^{\delta}(t)+ \cdots + P_d^{\delta}(t)}{t}  \right)\\[2ex]
&= \liminf_{k \to \infty} \min_{1\le i\le n-1} \min\left\{\sum_{j=1}^d \beta_k^{i,j}(\delta), \sum_{j=1}^d \beta_k^{i+1, j}(\delta)\right\}\\[2ex]
&= \liminf_{k \to \infty} \min_{1\le i\le n-1} \min\left\{\sum_{j=1}^d \beta_k^{i,j}, \sum_{j=1}^d \beta_k^{i+1, j}\right\}\\[2ex]
&= \min_{1\le i\le n} \alpha^{i,1} + \cdots + \alpha^{i,d} \\[2ex]
&=  \frac{1}{1+\tau_{n-d}}.\\
\end{align*}
This shows $\bP^{\delta}$ satisfies \eqref{d-exponents}.  \\

In the previous section (see Step 5), we have already seen that for any  distinct $\delta$ and $\delta'$ satisfying \eqref{deltabd}, the Roy $(n+1)-$systems $\bP^{\delta}$ and $\bP^{\delta'}$ on $[T_k, T_{k+1}]$  are mutually non-equivalent for all $k$ sufficiently large.    Consequently, there are {continuum} many non-equivalent Roy $(n+1)-$systems $\bP^{\delta}$ on $[0,\infty)$ that satisfies the conditions of Lemma~\ref{PropSyst}.   This completes the proof.  \qed \\

\subsection{Further comment\label{FC} }

Recall, that once we have constructed in \S\ref{PfPropSyst}  the  collection of non-equivalent Roy $(n+1)-$systems $\bP^{\delta}$  satisfying  Lemma~\ref{PropSyst}, the key ingredient towards  establishing our main result is Theorem \ref{thm-roy}.  In short, the latter guarantees that each system in our collection gives rise to a distinct point  $\bx\in \RR^n$ such that
  $$\|\bL_{\bx} - \bP^{\delta}\| \le  C_n       $$
  on $[0,\infty)$. In turn, it is not difficult to show that such $\bx$
 satisfies the desired Diophantine properties associated with Theorem~\ref{thm-main}.   Now with this in mind, let $n \in \N$ and $\cW$ be a collection of Roy $(n+1)-$systems $\bP:[0,\infty) \rightarrow \R^{n+1} $. In \cite[Theorem 2.3]{VarPrinc}, it is shown that if  $\cW$ satisfies the so called `closed under finite perturbations' hypothesis,   then one is  able to compute the Hausdorff dimension of the set
\[\left\{\bx\in \RR^n: \|\bL_\bx-\bP\|<\infty \text{ for some } \bP \in \cW\right\}.\]
However, it is not clear to us  whether the methods in \cite{VarPrinc} can be adapted to give a result on the Hausdorff dimension of the set
\[\left\{\bx\in \RR^n: \|\bL_\bx-\bP\|<C \text{ for some } \bP \in \cW\right\} \,  \]
for any fixed constant $C>0$.  Such a result would potentially  enable us to replace ``{continuum}'' by ``full Hausdorff dimension'' in the statement of Theorem~\ref{thslv1}.  We state this highly desired strengthening formally as an open problem.

\medskip

\begin{problem}\label{prob-HD}
Suppose that $n   \geq  2$. Prove that
$$
\dim \, \FS_n   = n  \, .
$$
\end{problem}

\noindent In \S\ref{Sec_A4} below we consider the natural  generalisation of the above problem to the setting of weighted Diophantine approximation for systems of linear forms.

A more subtle version of Problem~\ref{prob-HD} would be to understand how the removal of $\Bad_n\cup\Sing_n$ affects the quantitative form of the set  $\DI_n$, that is to estimate from below the Haudorff dimension of $\FS(\eps)$ , where for $0< \eps < 1$
$$
\FS_n(\eps) := \DI_n(\eps)  \setminus  (\Bad_n  \ \cup \  \Sing_n )  \, .
$$
Clearly, by definition, $\dim\FS_n(\eps) $ cannot be larger than $\dim \DI_n(\eps)$ and by a recent result due to Kleinbock $\&$ Mirzadeh \cite{KleMir}, the latter is strictly smaller than $n$ for all $\eps\in(0,1)$. Note that Theorem~\ref{thm-main},  guarantees that $$\FS_n(\eps) \, \neq  \, \varnothing  \,  \qquad \forall \ \  \eps \in (0,1) . $$

\section{Intermediate Diophantine sets revisited}\label{App}

The main goal of this section is to define  the notion of $d$-Dirichlet improvable points in $\R^n$ and investigate the relationship between them and the classical notions of simultaneous ($d=0$) and dual ($d=n-1$) Dirichlet improvable points.  We will explore two different approaches:
\begin{itemize}
  \item[(i)]
an algebraic approach using multilinear algebra and thus developing the ideas of Laurent \cite{MLwd}, and
\item[(ii)]
a geometric approach using Minkowski minima and thus developing the ideas of Schmidt $\&$ Summerer \cite{SSfirst,SS} and Roy \cite{Roy} on the parametric geometry of numbers leading to Lemma~\ref{DS} of \S\ref{sec-reform}.
\end{itemize}
\noindent The key ingredient required for achieving this lies in being  able to state an appropriate optimal Dirichlet type theorem (see Remark~\ref{bull} in \S\ref{setupmain}). This will be accomplished in \S\ref{noway} via the framework of multilinear algebra and in \S\ref{DTTGN} via the framework of the parametric geometry of numbers.
In the process of describing the setup leading to the Dirichlet type theorem of \S{\ref{DTTMA}}, we will take the opportunity to first revisit the intermediate badly approximable and singular sets in order to fill in the details of the arguments (cf. Remark~{\ref{ohyaohya}})  leading to {\eqref{equ-equi}}  and
{\eqref{equ-equiSING}}. Recall, that these equivalences are used in the ``classical'' proof of Proposition~{\ref{prop-equiv}} given in \S{\ref{sec-reform}} showing that the   intermediate $d$-badly approximable sets $\Bad_n^d$ (resp. the $d$-singular sets $\Sing_n^d$) are equivalent.  Moreover, we will provide an alternative ``dynamical'' proof of the proposition that avoids appealing to {\eqref{equ-equi}} and {\eqref{equ-equiSING}}.

\subsection{The algebraic approach}  \label{noway}

To proceed, we recall notions and results from  multilinear  algebra. Let $n\ge 2$ and $0\le d\le n-1$. First, we endow the linear space $\RR^{n+1}$ with the usual inner product and let $\{\be_{i}\}_{1\le i \le n+1}$ be the standard orthonormal basis. Then the wedge product $\wedge^{d+1}\RR^{n+1}$ is also equipped with an inner product with an orthonormal basis given by
\[\left\{\be_{i_1}\wedge \cdots \wedge \be_{i_{d+1}}: 1\le i_1\le \ldots \le i_{d+1}\le n+1\right\}.\]

\noindent Note that, under this basis, we identify $\wedge^{d+1}\RR^{n+1}$ with $\RR^{{n+1 \choose d+1}}$. For any $\bX\in \wedge^{d+1}\RR^{n+1}$, we set $|\bX|$ and $\|\bX\|$ to be the Euclidean norm and maximal norm (with respect to the basis given above), respectively.
 $\bX\in \wedge^{d+1}\RR^{n+1}$ is called \emph{decomposable} if and only if there exists $\bv_1, \ldots, \bv_{d+1}\in \RR^{n+1}$ such that $\bX=\bv_1\wedge \cdots \wedge \bv_{d+1}$.  It is worth highlighting that up to an homothety there is a one to one correspondence between decomposable $\bX  \in \wedge^{d+1}\R^{n+1}$  and $d$-dimensional rational affine subspaces of $ \RR^n.$  Indeed, by expressing a $d$-dimensional affine subspace $L \subset \RR^n \subset \PP^n(\RR)$ using homogeneous coordinates, we obtain a unique $(d+1)$-dimensional subspace $V_L$ of $\RR^{n+1}$ satisfying that $L =   \PP(V_L) $. Clearly, $L$ is rational if and only if $V_L$ has a integer basis $\{ \bv_1, \ldots, \bv_{d+1} \} \subset \Z^{n+1}$. By using the Pl\"ucker embedding
\begin{equation}\label{vb+1}
\mathrm{Gr}(d, \PP^n(\RR)) \hookrightarrow \PP(\wedge^{d+1}\RR^{n+1}),\quad L \mapsto \bX_L := \bv_1 \wedge \cdots \wedge \bv_{d+1}
\end{equation}
we obtain the Pl\"ucker coordinates for $L$.
Conversely, given a decomposable multivector $\bX\in\bigwedge^{d+1}\Z^{n+1}\setminus\{0\}$, the associated linear subspace $V_L$ can be expressed as $V_L:=\{\bx\in\R^{n+1}:\bx\wedge\bX=\mathbf{0}\}$ and then $L$ can be obtained by projecting the intersection $V_L\cap\{\bx\in\R^{n+1}:x_{n+1}=1\}$ onto the first $n$ coordinates.
The \emph{height} $H(L)$ of $L$ will be the Weil height, that is
\[H(L)  =  |\bX_L|  \,, \]
{where $\bX_L$ is as in \eqref{vb+1} and $\bv_1, \ldots, \bv_{d+1}$ is a basis of $V_L\cap \Z^{n+1}$.
In other words, $H(L)$ is the covolume of the lattice $V_L\cap \Z^{n+1}$ in $V_L$.
Note that $\bX_L\in \wedge^{d+1}\ZZ^{n+1}\setminus \{0\}$ has coprime coordinates.} Given  $ \bx \in \R^n$, we define the projective distance between $\bx$ and $L$ by
\begin{equation}\label{d_p}
d_p(\bx, L):=\frac{|\bx'\wedge \bX_L|}{|\bx'||\bX_L|},
\end{equation}
where $\bx':=(\bx, 1)\in \RR^{n+1}$. Geometrically, $d_p(\bx, L)$ equals the sine of the smallest angle between $\bx' $ and non-zero vectors in $V_L$. The projective distance $ d_p(\bx, L)$  is easily seen to be
locally (depending on $|\bx|$) comparable to the distance $d(\bx, L)$ defined by \eqref{normdist}  in \S\ref{setupmain} and indeed the distance of $\bx$ from $L$ induced by any norm on $\R^n$, see for instance, \cite[Eq~(3.5)]{Ber12}.

% N:=\psi(n) 3^n.\subsection{Showing $ \bx \in \Sing_n^d$  (resp. $\Bad_n^d$)  $\Longleftrightarrow$ \eqref{equ-equi} (resp. \eqref{equ-equiSING}) }

%\subsection{Showing $ \Sing_n^d$  (resp. $\Bad_n^d$)  $\equiv$ \eqref{equ-equi} (resp. \eqref{equ-equiSING}) }

\subsubsection{Showing $ \Sing_n^d \equiv \eqref{equ-equi} $ and  $\Bad_n^d   \equiv \eqref{equ-equiSING}$ }\label{A11}

The following statement provides an algebraic formulation of the sets $\Sing_n^d$ and $\Bad_n^d$ as defined via \eqref{equ-gen} and
\eqref{slv-bad-d} in \S\ref{setupmain}.  Recall,  given $ n \in \N$ and  $ d \in \{0,1, \ldots, n-1\} $,  we let
$$
\omega_{d}:=\frac{d+1}{n-d}  \, .
$$

\begin{lemma}\label{l-bad-sing}
Let $n \ge 2$ and $ 0 \le d \le n-1$.
\begin{itemize}
  \item[{\rm(i)}] $\bx\in \Sing_n^d$ if and only if for any given $\eps \in (0,1)$ and $N >N_0(\bx, \eps)$ sufficiently large, there exists a $\bX\in \wedge^{d+1} \ZZ^{n+1}\setminus\{0\}$, such that
      \begin{equation}\label{e-equation-sing}
        |\bX|\le N     \quad \text{and} \quad |\bx'\wedge \bX|\le \eps N^{-\omega_d}.
      \end{equation}
  \item[{\rm(ii)}]  $\bx\in \Bad_n^d$ if and only if there exists a constant $\eps:=\eps(\bx) \in (0,1)$
      such that for any $N >N_0(\bx)$ sufficiently large, there are no $\bX\in \wedge^{d+1} \ZZ^{n+1}\setminus\{0\}$ satisfying \eqref{e-equation-sing}.
\end{itemize}
\end{lemma}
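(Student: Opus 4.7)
The proof strategy is to translate systematically between the geometric language of distances from $\bx$ to $d$-dimensional rational affine subspaces $L \subset \R^n$ and the algebraic language of wedge products, using the Pl\"ucker dictionary set up just before the statement. Concretely, each such $L$ corresponds under the Pl\"ucker embedding to a primitive decomposable vector $\bX_L \in \wedge^{d+1}\ZZ^{n+1}$ satisfying $|\bX_L| = H(L)$ and $d_p(\bx, L) = |\bx' \wedge \bX_L|/(|\bx'|\,|\bX_L|)$, where $\bx' = (\bx, 1)$. Since $d_p(\bx, L) \asymp d(\bx, L)$ with an implied constant depending only on $|\bx|$, this yields the key identity
\[
|\bx' \wedge \bX_L| \;\asymp\; |\bx'|\,H(L)\,d(\bx, L),
\]
which will be the sole vehicle for translating between \eqref{equ-gen}, \eqref{slv-bad-d} and \eqref{e-equation-sing}.

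Using this identity, the ``easy'' direction of each part is immediate; in both cases it amounts to producing an algebraic witness $\bX$ from a geometric witness $L$. For part (i), given $\bx \in \Sing_n^d$, $\eps \in (0,1)$, and a subspace $L$ witnessing singularity with $H(L) \le N$ and $d(\bx, L) \le \eps H(L)^{-1} N^{-\omega_d}$, setting $\bX := \bX_L$ yields $|\bX| \le N$ and, by the identity above, $|\bx' \wedge \bX| \le c(\bx, n)\,\eps\,N^{-\omega_d}$, which is \eqref{e-equation-sing} after absorbing $c(\bx,n)$ into a rescaled $\eps$ (permissible since $\eps$ was arbitrary). A symmetric calculation handles one direction of part (ii): if $\bx \notin \Bad_n^d$ then for every $\eps$ there is $L$ with $d(\bx, L) \le \eps H(L)^{-1 - \omega_d}$, and taking $\bX = \bX_L$ and $N = H(L)$ produces arbitrarily large $N$ admitting an $\bX$ satisfying \eqref{e-equation-sing} for any prescribed tolerance, which is the contrapositive of the desired implication.

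The main obstacle is the remaining direction of each part: given an arbitrary $\bX \in \wedge^{d+1}\ZZ^{n+1} \setminus \{0\}$ satisfying \eqref{e-equation-sing}, one must extract a rational affine subspace $L$. The difficulty is that only \emph{decomposable} integer vectors correspond to rational affine subspaces under the Pl\"ucker dictionary, whereas a general $\bX$ need not be decomposable. This is exactly the type of statement provided by the lemma of \cite[Section~4]{BugLau2}: it guarantees that from any $\bX \in \wedge^{d+1}\ZZ^{n+1} \setminus \{0\}$ with $|\bx' \wedge \bX|$ sufficiently small relative to $|\bX|$, one can construct a nonzero decomposable $\bY \in \wedge^{d+1}\ZZ^{n+1}$ with $|\bY| \ll |\bX|$ and $|\bx' \wedge \bY| \ll |\bx' \wedge \bX|$, the implied constants depending only on $n$ and $\bx$. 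Granting this, the converse of part (i) follows by applying the dictionary to $\bY$ and its associated subspace $L_\bY$: the estimates $H(L_\bY) = |\bY| \ll N$ together with $d(\bx, L_\bY) \asymp |\bx' \wedge \bY|/(|\bx'|\,|\bY|)$ yield the defining condition of $\Sing_n^d$ after renaming constants. The remaining direction of (ii) is dispatched by the same step used contrapositively: from any assumed $\bX$ satisfying \eqref{e-equation-sing} the lemma produces a rational affine subspace violating \eqref{slv-bad-d}. Beyond invoking this Bugeaud--Laurent lemma, what remains is careful bookkeeping of multiplicative constants to ensure that the rescalings of $\eps$ preserve the ``for every $\eps$'' and ``for some $\eps$'' quantifier structure of the singular and badly approximable definitions respectively.
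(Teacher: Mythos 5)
Your proposal follows essentially the same route as the paper: the easy directions are dispatched via the Pl\"ucker dictionary (every rational affine subspace $L$ gives a primitive decomposable witness $\bX_L$ with $|\bX_L|=H(L)$ and $|\bx'\wedge\bX_L|\asymp H(L)\,d(\bx,L)$), and the hard directions reduce a general integral $\bX$ to a decomposable one via the compound-body machinery from \cite{BugLau2}. You have also parsed the quantifier structure correctly: the genuinely nontrivial directions are the ``if'' part of (i) and the ``only if'' part of (ii), and both are handled by the same reduction-to-decomposable step used contrapositively.

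One point where your account is imprecise is in what you attribute to the Bugeaud--Laurent lemma. That lemma (Lemma~3 in \cite{BugLau2}) is a statement comparing the pseudo-compound body $\mathcal{C}$ cut out by the inequalities $|\bX|\le N$, $|\bx'\wedge\bX|\le\eps N^{-\omega_d}$ with the genuine $(d+1)$-th Mahler compound $\widetilde{\mathcal{C}}_{d+1}$ of the one-dimensional body $\widetilde{\mathcal{C}}\subset\RR^{n+1}$; by itself it produces no decomposable vector. The decomposable witness $\bY=\bx_1\wedge\cdots\wedge\bx_{d+1}$ comes from a second, separate input, namely Mahler's inequality bounding the wedge of the minimizing vectors of $\widetilde{\mathcal{C}}$ inside a constant multiple of $\widetilde{\mathcal{C}}_{d+1}$. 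Moreover, the bounds one actually obtains are $|\bY|\ll N$ and $|\bx'\wedge\bY|\ll\eps N^{-\omega_d}$ (with constants depending only on $n$ and $d$, not on $\bx$), which is weaker than the form $|\bY|\ll|\bX|$, $|\bx'\wedge\bY|\ll|\bx'\wedge\bX|$ you state but is all that is needed for the argument. So your outline reaches the right conclusion, but the precise content of the \cite{BugLau2} input should be stated as ``convex body $\asymp$ Mahler compound,'' with the decomposable vector supplied by Mahler's successive-minima estimate for compounds, rather than as a direct black-box producing $\bY$.
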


\begin{remark} \label{corona}
In view of the discussion at the start of this section, it is  straightforward to establish the above reformulation of the sets  $\Sing_n^d$ and $ \Bad_n^d$  under the extra assumption that $\bX\in \wedge^{d+1} \ZZ^{n+1}\setminus\{0\}$ is decomposable.
In view of this,  the key feature of Lemma~{\ref{l-bad-sing}} is that it enables us to remove the decomposable assumption. \end{remark}

\begin{proof}
The `only if part' of both (i) and (ii) follows directly from Remark~\ref{corona}.  Now concentrating on the `if part' of (i),   in view of Remark~\ref{corona},  it suffices to show that  for any given $\eps \in (0,1)$ and $N >N_0(\bx, \eps)$ sufficiently large, there exists a decomposable $\bX\in \wedge^{d+1} \ZZ^{n+1}\setminus\{0\}$ such that \eqref{e-equation-sing} holds.  With this in mind, let $\eps\in (0,1)$, $N >N_0(\bx, \eps)$  and $\bX\in \wedge^{d+1} \ZZ^{n+1}\setminus\{0\}$ satisfying \eqref{e-equation-sing} be as given.
This implies  that the first successive minima of the convex body $\mathcal{C}$ defined by
\begin{equation}\label{slv-compound}\left\{\bX\in \wedge^{d+1}\RR^{n+1} \, : \  |\bX|\le N \, , \quad |\bx'\wedge \bX|\le \eps N^{-\omega_d} \right\}
\end{equation}
is less than $1$. Let
\[U:=\eps^{-d/(d+1)}N^{n/(n-d)} \qquad \text{and} \qquad V:=\eps^{(n+d)/(nd+n)}U^{1/n}=\eps^{1/(d+1)}N^{-1/(n-d)}.\]
Thus, $N=UV^{d}$ and $\eps N^{-\omega_d}=V^{d+1}$. Then, in view of \cite[Lemma 3]{BugLau2}, there exists  $\beta_1=\beta_1(n,d)>1$ such that
\begin{equation}\label{e-compound}
\beta_1^{-1}\widetilde{\mathcal{C}}_{d+1} \ \subset \ \mathcal{C}\  \subset \  \beta_1\widetilde{\mathcal{C}}_{d+1} \, ,
\end{equation}
where $\widetilde{\mathcal{C}}_{d+1}$ is the $(d+1)$-th compound of the convex body $\widetilde{\mathcal{C}}\subset \RR^{n+1}$ defined as
\[\widetilde{\mathcal{C}}:=\left\{\by\in \RR^{n+1}  \, : \ |y_{n+1}|\le U \, , \quad \max_{1\le i\le n}|y_{n+1}x_i-y_i|\le V \right\}.\]
In turn, it follows via  Mahler's theory of compound convex bodies that there exists $\beta_2=\beta_2(n,d)\ge 1$ such that
\begin{equation}\label{e-wed}
 \bX:=\bx_1\wedge \cdots \wedge\bx_{d+1}  \, \in \,  \beta_2 \
\lambda_1\left(\wedge^{d+1}\ZZ^{n+1},\mathcal{C}\right) \, \widetilde{\mathcal{C}}_{d+1}
\end{equation}
where $\bx_i\in \ZZ^{n+1}$ is the integer point at which $\widetilde{\mathcal{C}}$ attains its $i$-th successive minima. Recall,  for a given convex body $\mathcal{C}\subset \wedge^{d+1}\RR^{n+1}$ and $i=1, \ldots,d+1$,  we write $\lambda_i(\wedge^{d+1}\ZZ^{n+1},\mathcal{C})   $ for  the $i$-successive minima  of $ \mathcal{C} $  with respect to the lattice $\wedge^{d+1}\ZZ^{n+1}$.   The upshot of \eqref{e-wed} is that $\bX$ is decomposable and this proves the `if part' of  (i).  The proof of the `if part' of (ii) is similar and we leave the details to the reader.
\end{proof}

Armed with Lemma~\ref{l-bad-sing}, it is relatively straightforward to obtained the sought after statement.

\begin{lemma}\label{l-bad-sing2} Let $n \ge 2$ and $ 0 \le d \le n-1$.
\begin{itemize}
  \item[{\rm(i)}] $\bx \in \Sing_n^d$ if and only if for any  $\delta > 0$ there exists a constant $t_0= t_0({\delta})  > 0 $ such that for all $t \ge  t_0$ inequality \eqref{equ-equiSING} holds; that is
      \begin{equation*}
  \frac{(n-d)t}{n+1} - (L_{\bx,1}(t) + \cdots + L_{\bx,n-d}(t)) \ge \delta.
\end{equation*}
  \item[{\rm(ii)}] $\bx \in \Bad_n^d$ if and only if there exists a constant  $\delta > 0$ such that for all sufficiently large  $t$ inequality \eqref{equ-equi} holds; that is
\begin{equation*}
  \frac{(n-d)t}{n+1} - (L_{\bx,1}(t) + \cdots + L_{\bx,n-d}(t)) \le \delta.
\end{equation*}
\end{itemize}
\end{lemma}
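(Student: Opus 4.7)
The plan is to chain together three tools---Lemma~\ref{l-bad-sing}, the compound body comparison \eqref{e-compound} together with Mahler's theorem on successive minima of compound bodies, and Mahler's duality for polar convex bodies---to convert the Diophantine conditions defining $\Sing_n^d$ and $\Bad_n^d$ into asymptotic conditions on the partial sum $L_{\bx,1}(t)+\cdots+L_{\bx,n-d}(t)$, at which point the claim follows by taking logarithms.

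First, I would invoke Lemma~\ref{l-bad-sing} to restate $\bx\in\Sing_n^d$ (resp.\ $\bx\in\Bad_n^d$) as the assertion that for every $\eps\in(0,1)$ and all sufficiently large $N$ one has $\lambda_1(\wedge^{d+1}\ZZ^{n+1},\mathcal{C})\le 1$ (resp.\ that there exists $\eps\in(0,1)$ with $\lambda_1(\wedge^{d+1}\ZZ^{n+1},\mathcal{C})>1$ for all sufficiently large $N$), where $\mathcal{C}$ is the convex body \eqref{slv-compound} with $U$ and $V$ as in the proof of Lemma~\ref{l-bad-sing}. The comparison \eqref{e-compound} together with Mahler's compound body theorem then yields
\[
\lambda_1(\wedge^{d+1}\ZZ^{n+1},\mathcal{C})\ \asymp\ \lambda_1(\ZZ^{n+1},\widetilde{\mathcal{C}})\cdots \lambda_{d+1}(\ZZ^{n+1},\widetilde{\mathcal{C}}),
\]
with implicit constants depending only on $n$ and $d$.

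Next I would compute the polar $\widetilde{\mathcal{C}}^{\circ}$ from the extreme points of $\widetilde{\mathcal{C}}$; a short calculation gives the $\ell^1$-type body
\[
\widetilde{\mathcal{C}}^{\circ}=\Big\{\bz\in\RR^{n+1} : U\Big|z_{n+1}+\sum_{i=1}^n z_i x_i\Big|+V\sum_{i=1}^n|z_i|\le 1\Big\},
\]
which is comparable, up to a constant depending only on $n$, to the scaled body $V^{-1}\sC_{\bx}(\e^t)$ with $t:=\log(U/V)$. Combining Mahler duality $\lambda_i(\widetilde{\mathcal{C}})\cdot \lambda_{n+2-i}(\widetilde{\mathcal{C}}^{\circ})\asymp 1$ with Minkowski's second theorem $\prod_{j=1}^{n+1}\lambda_j(\sC_{\bx}(\e^t))\asymp \e^t$ then produces
\[
\lambda_1(\mathcal{C})\ \asymp\ V^{-(d+1)}\, \e^{-t}\, \prod_{j=1}^{n-d}\lambda_j\big(\sC_{\bx}(\e^t)\big).
\]
Substituting $U=\eps^{-d/(d+1)}N^{n/(n-d)}$ and $V=\eps^{1/(d+1)}N^{-1/(n-d)}$ one verifies that $t=-\log\eps+\tfrac{n+1}{n-d}\log N$ and $t+(d+1)\log V=\log N$, so after taking logarithms the condition $\lambda_1(\mathcal{C})\le 1$ (up to a bounded multiplicative factor) rearranges to
\[
\sum_{j=1}^{n-d}L_{\bx,j}(t)\ \le\ \log N+O_n(1)\ =\ \frac{(n-d)\,t}{n+1}+\frac{(n-d)\log\eps}{n+1}+O_n(1),
\]
while the opposite inequality corresponds to $\lambda_1(\mathcal{C})>1$. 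Since $N\to\infty$ is equivalent to $t\to\infty$ and the quantity $-\tfrac{(n-d)\log\eps}{n+1}$ can be made arbitrarily large by shrinking $\eps$ (matching the $\forall\eps$ quantifier defining $\Sing_n^d$) or plays the role of the fixed constant $\delta$ (matching the $\exists\eps$ quantifier defining $\Bad_n^d$), both equivalences follow.

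The main obstacle is the bookkeeping of implicit constants: one has to verify that the comparability factors introduced by \eqref{e-compound}, the compound body theorem, the polar simplification, Mahler duality and Minkowski's theorem all collapse into a single $O_n(1)$ additive term in the logarithmic scale which is independent of $\bx$, $\eps$, $N$ and $t$. For independence from $\bx$ one first reduces modulo $\ZZ^n$ to $\bx\in[0,1]^n$ so that $|\bx'|$ is bounded; the remaining constants depend only on $n$ and $d$, which is exactly what the argument requires.
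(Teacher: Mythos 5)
Your proposal is correct and follows essentially the same route as the paper: both reduce via Lemma~\ref{l-bad-sing} to a statement about $\lambda_1$ of the compound-type body, use \eqref{e-compound} plus Mahler's compound-body theorem, then pass to the polar and apply Mahler duality together with Minkowski's second theorem to convert the product $\lambda_1\cdots\lambda_{d+1}$ into $\lambda_1\cdots\lambda_{n-d}$ of the dual body, i.e.\ into $L_{\bx,1}(t)+\cdots+L_{\bx,n-d}(t)$ up to an $O_n(1)$ additive error. The only cosmetic differences are that you keep the $(N,\eps)$ parameterization and explicitly compute the $\ell^1$-type polar of $\widetilde{\mathcal{C}}$, whereas the paper normalizes directly to the single parameter $t$ via $\mathcal{K}_{\bx}(\e^t)$ and absorbs $\eps$ into the threshold $\e^{-\delta}$; the chain of tools and the final bookkeeping are the same.
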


 \begin{proof}
 Lemma \ref{l-bad-sing} implies  that  $\bx \in \Sing_n^d$ if and only if for any  $\delta > 0$ there exists a constant $t_0= t_0({\delta})  > 0 $ such that for all $t \ge  t_0$,
\[\lambda_1\left( \wedge^{d+1}\ZZ^{n+1}, \mathcal{K}_{\bx}^{d+1}(\e^{t}) \right)\le e^{-\delta}\]
where
\[ \mathcal{K}_{\bx}^{d+1}(\e^{t}) := \left\{\bX \in \wedge^{d+1}\RR^{n+1}  \, :  \  | \bX | \le \e^{\frac{(n-d)t}{n+1}}  \,  ,  \quad |\bx'\wedge \bX | \le e^{\frac{-(d+1) t}{n+1}} \right\}.\]
 In view of \eqref{e-compound}, the convex body $ \mathcal{K}_{\bx}^{d+1}(\e^{t})$ is comparable (with implied constants depending on $n$ and $d$ only)  to the $(d+1)$-th compound of the convex body
 \[\mathcal{K}_{\bx}(e^t):=\left\{\by \in \RR^{n+1}\, :  \  |y_{n+1}| \le \e^{\frac{nt}{n+1}} \,  , \quad \max_{1\le i\le n}|y_{n+1}x_i-y_i | \le e^{-\frac{t}{n+1}} \right\}   \, . \]
  Note that
  \[\cC_{\bx}(\e^t)=e^{-\frac{t}{n+1}} \ \mathcal{K}_{\bx}(e^t)^{\vee},\]
  where $ \cC_{\bx}(\e^t) $ is given by  \eqref{def-sc} and $\mathcal{K}_{\bx}(e^t)^{\vee}$ denotes the dual of $\mathcal{K}_{\bx}(e^t)$.
 Hence, by Minkowski's  second convex body theorem, it follows that
 \begin{eqnarray*}
 % \nonumber % Remove numbering (before each equation)
   \log \lambda_1\left(\wedge^{d+1}\ZZ^{n+1},
   \mathcal{K}_{\bx}^{d+1}(\e^{t})\right) &=& \sum_{i=1}^{d+1} \log \lambda_i \left(\ZZ^{n+1},\mathcal{K}_{\bx}(\e^{t})\right)+O(1)\\
    &=& \sum_{i=1}^{d+1} -\log\lambda_{n+2-i} \left(\ZZ^{n+1},\mathcal{K}_{\bx}(e^t)^{\vee}\right)+O(1) \\
    &=& \sum_{i=1}^{n-d} \log \lambda_i  \left(\ZZ^{n+1},\mathcal{K}_{\bx}(e^t)^{\vee}\right) +O(1)\\
    &=& -\frac{(n-d)t}{n+1}+\sum_{i=1}^{n-d}L_{\bx,i}(t) +O(1)  \, ,
 \end{eqnarray*}
where the quantity $L_{\bx,i}(t)$ is given by \eqref{def-convexbodySV} and the implied constants  in the `big $O$' term depend on $n$ and $d$ only.  This thereby completes the proof of first claim made in the lemma. The proof of (ii) is similar and  we leave the details to the reader.
 \end{proof}

\medskip

\subsubsection{An optimal Dirichlet type theorem via multilinear algebra}  \label{DTTMA}

For obvious reasons, as discussed in Remark~\ref{bull}, in order to define Dirichet improvable sets  it is paramount to start with an optimal Dirichlet type theorem.  Any such theorem should naturally  not only imply Theorem~\ref{Dir-d} concerning the approximation of points by rational subspaces but also coincide with the classical simultaneous and dual forms of Dirichlet theorem.  The multilinear algebra framework exploited in the previous section yields the following optimal statement.

\begin{thm}\label{t-i-d}
Let $n \in \NN$ and $d $ be an integer satisfying $ 0 \le d \le n-1$. Then for any  $\bx\in \RR^n$ and $N > 1$,  there exist $\bZ \in \wedge^{d}\ZZ^{n}\setminus \{0\}$ and $\bY \in \wedge^{d+1}\ZZ^{n}$ such that
\begin{equation}\label{e-qqq}
\|\bZ\|  \le N \quad \text{ and } \quad \|\btheta \wedge \bZ+\bY\| \le  N^{-\omega_d} \, .
\end{equation}
\end{thm}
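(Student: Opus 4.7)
My plan is to apply Minkowski's first convex body theorem to the centrally symmetric closed convex body
\[
\mathcal{K}_N := \bigl\{ (\bZ, \bY) \in \wedge^d \R^n \oplus \wedge^{d+1} \R^n : \|\bZ\| \le N,\ \|\bx \wedge \bZ + \bY\| \le N^{-\omega_d} \bigr\}
\]
inside the ambient space $W := \wedge^d \R^n \oplus \wedge^{d+1} \R^n$, viewed together with the standard integer lattice $\Lambda := \wedge^d \ZZ^n \oplus \wedge^{d+1} \ZZ^n$, which has covolume $1$ with respect to the standard wedge basis. The linear map $T(\bZ, \bY) := (\bZ, \bY + \bx \wedge \bZ)$ is a unimodular shear of $W$ (it is block-triangular with identities on the diagonal), so $\mathcal{K}_N$ has the same volume as the max-norm box $\{\|\bZ\| \le N\} \times \{\|\bY'\| \le N^{-\omega_d}\}$.

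The resulting volume is
\[
\mathrm{Vol}(\mathcal{K}_N) = (2N)^{\binom{n}{d}} \bigl(2 N^{-\omega_d}\bigr)^{\binom{n}{d+1}} = 2^{\dim W}\cdot N^{\binom{n}{d} - \omega_d \binom{n}{d+1}}.
\]
The crucial observation is the identity $\binom{n}{d+1} = \frac{n-d}{d+1}\binom{n}{d}$, which, combined with $\omega_d = (d+1)/(n-d)$, yields $\omega_d \binom{n}{d+1} = \binom{n}{d}$. Hence the exponent of $N$ vanishes and $\mathrm{Vol}(\mathcal{K}_N) = 2^{\dim W}$ on the nose. Minkowski's theorem (in its closed-body version) then supplies a nonzero lattice point $(\bZ, \bY) \in \Lambda \setminus \{0\}$ in $\mathcal{K}_N$.

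To finish, I need to rule out $\bZ = 0$. If $\bZ = 0$, then the second inequality becomes $\|\bY\| \le N^{-\omega_d} < 1$ (using $N > 1$ and $\omega_d > 0$, both of which hold since $0 \le d \le n-1$), and integrality forces $\bY = 0$, contradicting the nontriviality of the Minkowski point. Hence $\bZ \in \wedge^d \ZZ^n \setminus \{0\}$ and $\bY \in \wedge^{d+1}\ZZ^n$ satisfy \eqref{e-qqq}.

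The whole argument is essentially a one-shot application of Minkowski's theorem. The only subtle point — and what makes the theorem \emph{optimal} in the sense required for defining $\DI_n^d$ — is the exact match between the exponent $\omega_d = (d+1)/(n-d)$ and the Minkowski critical threshold. It is worth emphasizing that no decomposability assumption on $\bZ$ or $\bY$ is used or needed; this is precisely what distinguishes the statement from Theorem~\ref{Dir-d} with its implicit multiplicative constant $c(\bx,n)$, and what makes the height $\|\bZ\|$ the right replacement for $H(L)$ in the optimal formulation.
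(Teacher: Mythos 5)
Your proof is correct and follows essentially the same route as the paper: both reduce to a volume computation for the convex body in $\wedge^d\R^n \oplus \wedge^{d+1}\R^n$ (via the identity $\omega_d\binom{n}{d+1} = \binom{n}{d}$, which makes the volume exactly $2^{\dim V}$) and then apply Minkowski's convex body theorem. You are somewhat more explicit than the paper — spelling out the unimodular shear that justifies the volume computation and the integrality argument ruling out $\bZ = 0$ — but these are details the paper leaves implicit rather than a different method.
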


\medskip

\begin{proof}
Consider the linear space $V:=\wedge^d \RR^{n}\oplus \wedge^{d+1} \RR^{n}$ with the lattice $L:=\wedge^d \ZZ^{n}\oplus \wedge^{d+1} \ZZ^{n}$. On  observing that $\omega_d={n \choose d}/{n \choose d+1}$, it is easily verified that for any given $\bx\in \RR^{n}$ the volume of the convex body given by \eqref{e-qqq} is equal to  $2^{\dim V}$. Hence, the statement of the theorem follows as a direct consequence of Minkowski's convex body theorem.
\end{proof}

\medskip

\noindent   By taking $d=0$ and  $n-1$ in Theorem~\ref{t-i-d}, we immediately recover the classical simultaneous and dual forms of Dirichlet's theorem.  We now show that for general  $d$  we  recover  Theorem~\ref{Dir-d}.   \\

\noindent {\em Step 1.\ }   We show that Theorem~\ref{Dir-d} has the following equivalent algebraic formulation.  Recall, given $\vx \in \RR^n$ we  let $\bx':=(\bx, 1) \in \RR^{n+1} $.

\begin{lemma}\label{l-d-type}
Let $n \in \N$ and $d$ be integer satisfying $0 \le d \le n-1$.  Then
 for any $\vx \in \RR^n$  there exists a constant $ c=c(n,d,\bx) > 0$,  such that for any  $N \ge 1 $ there exist a $\bX \in \wedge^{d+1}\ZZ^{n+1}\setminus \{0\}$, such that
      \begin{equation}\label{e-equation-dir}
        |\bX|\le N \quad \text{and} \quad |\bx'\wedge \bX|\le c N^{-\omega_d}.
      \end{equation}
\end{lemma}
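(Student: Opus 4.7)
The plan is to derive Lemma~\ref{l-d-type} from Theorem~\ref{Dir-d} via the Pl\"ucker embedding recalled at the start of the appendix. Fix $\bx \in \RR^n$ and $N \geq 1$. Theorem~\ref{Dir-d} supplies a $d$-dimensional rational affine subspace $L \subset \RR^n$, together with a constant $c_1 = c_1(\bx, n) > 0$, such that
\[
H(L) \leq N \qquad \text{and} \qquad d(\bx, L) \leq c_1\, H(L)^{-1}\, N^{-\omega_d}.
\]
This $L$ will serve as the source of the wedge vector sought in the lemma.

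Next I would translate this geometric statement into multilinear-algebra form. Let $\bX \in \wedge^{d+1}\ZZ^{n+1} \setminus \{0\}$ be the primitive integer representative of the Pl\"ucker class $\bX_L$. By the definition of height recalled just above Lemma~\ref{l-bad-sing}, we have $|\bX| = H(L) \leq N$, which gives the first bound required by the lemma. The second bound is obtained from the projective-distance identity
\[
d_p(\bx, L) \;=\; \frac{|\bx' \wedge \bX|}{|\bx'|\,|\bX|}
\]
combined with the comparability $d(\bx, L) \asymp d_p(\bx, L)$ noted in the paragraph preceding Lemma~\ref{l-bad-sing}, whose implied constants depend only on $|\bx|$ and $n$. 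Solving for $|\bx' \wedge \bX|$ and chaining the two estimates yields
\[
|\bx' \wedge \bX| \;=\; |\bx'|\,|\bX|\,d_p(\bx, L) \;\leq\; C(\bx, n)\, H(L)\, d(\bx, L) \;\leq\; C(\bx, n)\, c_1\, N^{-\omega_d},
\]
so setting $c := C(\bx, n)\, c_1$, which depends only on $\bx$ and $n$, gives $|\bx' \wedge \bX| \leq c\, N^{-\omega_d}$ and completes the required inequality. Nonvanishing of $\bX$ is automatic since $L$ is a genuine subspace and so its Pl\"ucker vector is nonzero.

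The key step on which everything turns is the exact cancellation $|\bX| \cdot H(L)^{-1} = 1$: this is where the algebraic formulation has its advantage. The factor $H(L)^{-1}$ that sits on the right-hand side of the Dirichlet-type inequality from Theorem~\ref{Dir-d} is precisely absorbed when one multiplies through by $|\bX| = H(L)$ in the passage from $d(\bx, L)$ to $|\bx' \wedge \bX|$ via the projective distance, which is why that $H(L)^{-1}$ factor, present in Theorem~\ref{Dir-d} but absent from the algebraic statement, causes no loss in the exponent of $N$. The one point that genuinely requires care is tracking that the comparability constants in $d(\bx, L) \asymp d_p(\bx, L)$ depend on $\bx$ and $n$ alone (uniformly over the rational affine $L$), which follows from their standard ``sine of angle vs. nearest-point'' geometric interpretation and the equivalence of the maximum and Euclidean norms in $\RR^n$.

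For the full ``equivalence'' claim made in Step~1 one would also need the reverse implication Lemma~\ref{l-d-type} $\Rightarrow$ Theorem~\ref{Dir-d}. That requires extracting a decomposable wedge vector from a possibly non-decomposable $\bX \in \wedge^{d+1}\ZZ^{n+1}$, for which the standard route is Mahler's theory of compound convex bodies, following the pattern of the proof of Lemma~\ref{l-bad-sing}. That converse is the more delicate direction, but it is not what proves the lemma as stated; the Pl\"ucker translation above is.
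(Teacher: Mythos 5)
Your Pl\"ucker translation is a correct proof of the implication Theorem~\ref{Dir-d} $\Rightarrow$ Lemma~\ref{l-d-type}; it fleshes out exactly what the paper dismisses in Step~1 as ``easy to deduce \dots for the same reasons as outlined in Remark~\ref{corona}.'' The cancellation $|\bX|\cdot H(L)^{-1}=1$ is the right observation, and you have tracked the constants adequately. The problem is logical, not computational: within this paper your route is circular. The main text does not prove Theorem~\ref{Dir-d} independently; it explicitly defers the proof to the appendix, and there (Proposition~\ref{propa1}) Theorem~\ref{Dir-d} is established \emph{from} Lemma~\ref{l-d-type} together with the Mahler compound-body argument --- precisely the ``delicate direction'' you flag at the end. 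So you are assuming the output of the very chain the lemma is supposed to feed.

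The paper's standalone proof of Lemma~\ref{l-d-type} is Step~2, which derives it from Theorem~\ref{t-i-d}. That theorem is an immediate consequence of Minkowski's convex body theorem applied to the symmetric body defined by \eqref{e-qqq} in $\wedge^d\RR^n\oplus\wedge^{d+1}\RR^n$, with the identity $\omega_d = {n\choose d}/{n\choose d+1}$ making the volume exactly $2^{\dim V}$. From the output $\bZ\in\wedge^{d}\ZZ^n\setminus\{0\}$ and $\bY\in\wedge^{d+1}\ZZ^n$ one sets $\bX := \be_{n+1}\wedge\bZ-\bY \in \wedge^{d+1}\ZZ^{n+1}\setminus\{0\}$ and checks \eqref{e-equation-dir} by the elementary wedge estimates leading to \eqref{e-111} and \eqref{e-222}. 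Note that this never requires $\bX$ to be decomposable, which is precisely why this direction is the easy one: you obtain $\bX$ directly from Minkowski without passing through rational affine subspaces, the Pl\"ucker embedding, or the distance $d(\bx,L)$ at all. Unless you intend Theorem~\ref{Dir-d} to be imported as an external classical fact, you should rewrite the argument to start from Theorem~\ref{t-i-d}. A minor additional point: when invoking $d(\bx,L)\asymp d_p(\bx,L)$ you should acknowledge that the paper only asserts comparability ``locally''; this is harmless here because the inequalities force $d(\bx,L)$ to be small, but it deserves a sentence.
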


\begin{proof}[Proof of equivalence of Theorem~\ref{Dir-d} and Lemma \ref{l-d-type}.]     For the same reasons as outlined in Remark~\ref{corona}, it is easy to deduce Lemma~\ref{l-d-type} from Theorem~\ref{Dir-d}. For the converse, we adapt the proof of Lemma~\ref{l-bad-sing}.   This simply amounts to putting  $\eps = c(n,d,\bx)$  when defining the convex body $\mathcal{C}$ given by \eqref{slv-compound}. Apart from this the given  proof remains unchanged.
\end{proof}

\bigskip

\noindent {\em Step 2.\ }   We show that  Theorem~\ref{t-i-d} implies Lemma~\ref{l-d-type}.

\begin{proof}[Proof  that Theorem~\ref{t-i-d} implies Lemma \ref{l-d-type}.]
With reference to Theorem~\ref{t-i-d}, given $\vx \in \RR^n$ and  $N> \max\{|\bx|^{-1/\omega_d}, 1\}$ let $\bZ \in \wedge^{d}\ZZ^{n}\setminus \{0\}$ and $\bY \in \wedge^{d+1}\ZZ^{n}$ be a solution to  \eqref{e-qqq}. Then, it follows that
\begin{equation}\label{e-111sv}
\|\bY\|\le \|\bx\wedge \bZ\|+N^{-\omega_d} \le |\bx\wedge \bZ|+|\bx|\le 2|\bx||\bZ|.
\end{equation}

\noindent Now let $\be_{n+1}:=(0,\ldots, 0, 1)\in \bZ^{n+1}$ and identify the set  $\{\by=(y_1,\ldots,y_{n+1})\in \ZZ^{n+1}: y_{n+1}=0 \}$ (resp. $\{\by=(y_1,\ldots,y_{n+1})\in \RR^{n+1}: y_{n+1}=0 \}$) with $\ZZ^n$ (resp. $\RR^n$). Then,  we have that
\[\wedge^{d+1}\ZZ^{n+1}= \be_{n+1}\wedge (\wedge^{d}\ZZ^{n})\oplus \wedge^{d+1}\ZZ^{n}.\]
Next, let
\[\bX:=\be_{n+1}\wedge \bZ-\bY   \, \in  \,  \wedge^{d+1}\ZZ^{n+1}.\]
Then, it follows that
\begin{eqnarray*}
\|\bX\| \ \le \  \|\bZ\|+ \|\bY\| \ \stackrel{\eqref{e-111sv}}{\le} \  (2|\bx|+1)|\bZ|  & \le  &  (2|\bx|+1)\sqrt{{n \choose d}} \ \|\bZ\|  \nonumber \\[1ex] & \stackrel{\eqref{e-qqq}}{\le}  &  2^{\frac{n}{2}}(2|\bx|+1)N \ ,
\end{eqnarray*}
and so
\begin{equation} \label{e-111}
|\bX|   \, \le  \,   n^{\frac12} \, 2^{\frac{n}{2}}(2|\bx|+1)N \, .
\end{equation}  \\
On the other hand, we have that
\begin{align*}
  \bx'\wedge \bX &= (\be_{n+1}+\bx)\wedge (\be_{n+1} \wedge \bZ-\bY)=-\be_{n+1}\wedge\bY+ \bx\wedge \be_{n+1} \wedge \bZ -\bx\wedge \bY \\[1ex]
  &=-(\be_{n+1}+\bx)\wedge(\bx \wedge \bZ+\bY).
\end{align*}
Since $\be_{n+1}$ is orthogonal to $\bx$, $\be_{n+1}\wedge (\bx \wedge \bZ+\bY)$ is orthogonal to $\bx \wedge(\bx \wedge \bZ+\bY)$. Thus,
\begin{equation}\label{e-222sv}
  |\bx \wedge \bZ+\bY| \, \le  \,  |\bx'\wedge \bX| \, \le  \,  |\bx'| |\bx \wedge \bZ+\bY|
\end{equation}
and on using the above right hand side inequality and  \eqref{e-qqq},    it follows that
\begin{equation}\label{e-222}
|\bx'\wedge \bX|  \, \le  \,
2^{\frac{n}{2}}|\bx'|  \,  \|\bx \wedge \bZ+\bY\| \, \le  \,  2^{\frac{n}{2}}|\bx'|N^{-\omega_d} .
\end{equation}\\
Together,  \eqref{e-111} and \eqref{e-222} imply the statement of the lemma.
\end{proof}

The upshot of Steps 1 $\&$ 2 is the following desired statement.

\begin{prop} \label{propa1} \ \ \  \
 Theorem~\ref{t-i-d}  $ \quad \Longrightarrow \quad $    Theorem~\ref{Dir-d}.
\end{prop}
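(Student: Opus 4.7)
The plan is to prove Proposition~\ref{propa1} by chaining together the two intermediate results established in Steps~1 and 2, which collectively form a two-stage bridge from Theorem~\ref{t-i-d} to Theorem~\ref{Dir-d}. The bridge passes through Lemma~\ref{l-d-type}, which is an algebraic reformulation of Theorem~\ref{Dir-d} in terms of integer multi-vectors $\bX \in \wedge^{d+1}\Z^{n+1}$ having controlled norm and small wedge with the projective lift $\bx':=(\bx,1)$.

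First I would invoke Step~2, which, given the pair $(\bZ,\bY)$ furnished by Theorem~\ref{t-i-d}, produces the candidate $\bX := \be_{n+1}\wedge\bZ - \bY \in \wedge^{d+1}\Z^{n+1}$. The algebraic identity
$$\bx' \wedge \bX \;=\; -(\be_{n+1}+\bx)\wedge(\bx\wedge\bZ+\bY),$$
 combined with the orthogonality of $\be_{n+1}$ to $\bx$, gives $|\bx'\wedge\bX| \le \text{const}(\bx,n)\cdot N^{-\omega_d}$, while a crude triangle-inequality bound on the norm of a wedge yields $|\bX| \le \text{const}(\bx,n)\cdot N$. These multiplicative constants are harmless since Lemma~\ref{l-d-type} allows a $\bx$-dependent constant $c=c(\bx,n)$.

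Next I would invoke Step~1 to promote Lemma~\ref{l-d-type} to Theorem~\ref{Dir-d}. The conceptual content here, and what I expect to be the main obstacle, is that Lemma~\ref{l-d-type} delivers some integer multi-vector $\bX$ which is a priori \emph{not decomposable}, whereas $d$-dimensional rational affine subspaces of $\R^n$ correspond under the Pl\"ucker embedding to \emph{decomposable} elements of $\wedge^{d+1}\Z^{n+1}$. The plan to bridge this decomposability gap mirrors the argument in Lemma~\ref{l-bad-sing}: the convex body $\mathcal{C} \subset \wedge^{d+1}\R^{n+1}$ cut out by the inequalities of Lemma~\ref{l-d-type} is sandwiched, via \cite[Lemma~3]{BugLau2}, between constant multiples of the $(d+1)$-th compound of the simpler convex body
$$\widetilde{\mathcal{C}}:=\bigl\{\by\in\R^{n+1}:|y_{n+1}|\le U,\ \max_{1\le i\le n}|y_{n+1}x_i - y_i|\le V\bigr\}$$
 for a suitable choice of $U,V$ with $UV^d=N$ and $V^{d+1}=cN^{-\omega_d}$. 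Taking $\bx_1,\ldots,\bx_{d+1}\in\Z^{n+1}$ to be integer vectors realising the first $d+1$ successive minima of $\widetilde{\mathcal{C}}$, Mahler's theory of compound bodies guarantees that $\bx_1\wedge\cdots\wedge\bx_{d+1}$ is a decomposable element of $\wedge^{d+1}\Z^{n+1}$ lying in a bounded multiple of $\mathcal{C}$, hence enjoying the same height and wedge bounds up to constants depending only on $n,d$. Under the Pl\"ucker correspondence this decomposable element is the multi-vector attached to a rational $d$-dimensional affine subspace $L\subset \R^n$ whose projective height is $\ll N$ and whose projective distance $d_p(\bx,L)$ — and therefore also $d(\bx,L)$, up to an $\bx$-dependent factor — is $\ll N^{-\omega_d}/H(L)$, which is exactly the content of Theorem~\ref{Dir-d}.

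Composing Steps~1 and 2 closes the chain $\text{Theorem~\ref{t-i-d}}\Rightarrow\text{Lemma~\ref{l-d-type}}\Rightarrow\text{Theorem~\ref{Dir-d}}$, yielding the proposition. The only delicate point is the compound-body argument in Step~1; Step~2 is a direct algebraic manipulation.
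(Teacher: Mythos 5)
Your proposal matches the paper's proof exactly: the paper proves Proposition~\ref{propa1} precisely by chaining Step~2 (Theorem~\ref{t-i-d} $\Rightarrow$ Lemma~\ref{l-d-type}, via the construction $\bX := \be_{n+1}\wedge\bZ - \bY$ and the orthogonality identity) with Step~1 (Lemma~\ref{l-d-type} $\Rightarrow$ Theorem~\ref{Dir-d}, via the compound-body sandwiching of \cite[Lemma~3]{BugLau2} and Mahler's theory to recover a decomposable multivector), and you have reproduced both steps faithfully, including the key point that the $\bx$-dependent constants are absorbed into $c(\bx,n)$.
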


\subsubsection{Bad, Singular and Dirichlet Improvable in light of Theorem~\ref{t-i-d}.   \label{bojo}}

Theorem~\ref{t-i-d} enables us to define an associated  notion of $d$-Dirichlet improvable vectors as well as alternative ``algebraic'' notions of $d$-singular and $d$-badly approximable vectors.

\begin{definition} \label{BLdef}
 Let $n \in \NN$ and $d $ be an integer satisfying $ 0 \le d \le n-1$.  Then for $\eps \in (0,1)$, let
 \begin{itemize}
   \item  $\DI_n^d(\eps)$  be the set of $\bx\in \RR^n$, such that for  all $N > N_0( \bx, \eps) $ sufficiently large,  there exist $\bZ \in \wedge^{d}\ZZ^{n}\setminus \{0\}$ and $\bY \in \wedge^{d+1}\ZZ^{n}$  such that
\begin{equation}\label{e-equation-sing2}
\|\bZ\|  \le N  \quad \text{ and } \quad \|\btheta \wedge \bZ+\bY\| \le \eps N^{-\omega_d}.
\end{equation}
\item $\Bad_n^d(\eps)$ be the set of $\bx\in \RR^n$, such that for  all $N > N_0( \bx, \eps) $ sufficiently large,  there are no solutions  $\bZ \in \wedge^{d}\ZZ^{n}\setminus \{0\}$ and $\bY \in \wedge^{d+1}\ZZ^{n}$ to  \eqref{e-equation-sing2}.
 \end{itemize}
Moreover, let
\[\DI_n^d \, :=\bigcup_{\eps\in (0,1)} \DI_n^d(\eps),\quad \Sing_n^d \, := \bigcap_{\eps\in (0,1)} \DI_n^d(\eps),\quad \Bad_n^d \, := \bigcup_{\eps\in (0,1)} \Bad_n^d(\eps).
 \]
\end{definition}

\medskip

{Recall, that on taking $d=0$ (resp. $d=n-1$) in  Theorem~\ref{t-i-d}, we immediately recover the classical simultaneous (resp. dual) form of Dirichlet's theorem. Thus, in these cases it is clear that the sets $\DI_n^d$, $\Sing_n^d$  and $\Bad_n^d$ given by Definition \ref{BLdef} coincide with those defined via the  classical forms of Dirichlet theorem.
In particular, for Dirichlet improvable points in $\R^n$, this means that the set $ \DI_n^0 $  (resp.  $ \DI_n^{n-1} $)  is  equivalent to  set  defined via the classical inequality \eqref{DIsv1} (resp.  \eqref{equ-basic}) and
in view of Davenport $\&$ Schmidt  \cite[Theorem~2]{DavSch} we have that
 \begin{equation} \label{stillagain}
 \DI_n^0=\DI_n^{n-1}:=\DI_n  \, ;
 \end{equation}
 i.e. the simultaneous  form of Dirichlet's theorem is improvable if and only if the dual form of Dirichlet's theorem is improvable.
}

For obvious reasons, including consistency, it is vitally important that the following `equivalence' statement is true for previously defined intermediate Diophantine sets.

\begin{lemma} \label{age}
The definition of $\Sing_n^d$  (resp. $\Bad_n^d$)  given by Definition \ref{BLdef} is equivalent to that given by \eqref{equ-gen}
  (resp. \eqref{slv-bad-d}) in  \S\ref{setupmain}.
\end{lemma}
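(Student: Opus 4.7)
The plan is to reduce Lemma~\ref{age} to the already-established Lemma~\ref{l-bad-sing}, which reformulates the classical subspace definitions \eqref{equ-gen} and \eqref{slv-bad-d} in terms of integer vectors $\bX \in \wedge^{d+1}\Z^{n+1}\setminus\{0\}$ satisfying $|\bX| \le N$ and $|\bx' \wedge \bX| \le \eps N^{-\omega_d}$. Definition~\ref{BLdef}, on the other hand, uses pairs $(\bZ,\bY) \in (\wedge^{d}\Z^n \setminus\{0\})\times \wedge^{d+1}\Z^n$ satisfying $\|\bZ\|\le N$ and $\|\bx \wedge \bZ + \bY\| \le \eps N^{-\omega_d}$. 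It thus suffices to exhibit a translation between $\bX$ and $(\bZ,\bY)$ which preserves the relevant bounds up to multiplicative constants depending only on $n$ and $|\bx|$. Since $\Sing_n^d = \bigcap_{\eps} \DI_n^d(\eps)$ and $\Bad_n^d = \bigcup_{\eps}\Bad_n^d(\eps)$ involve quantifiers over $\eps \in (0,1)$, any such constant factor can be absorbed into $\eps$, yielding the desired equivalence of both sets simultaneously.

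For the implication ``Definition~\ref{BLdef} $\Rightarrow$ Lemma~\ref{l-bad-sing}-formulation'', I would essentially replay Step~2 of \S\ref{DTTMA}: starting from $(\bZ,\bY)$ satisfying the bounds of Definition~\ref{BLdef}, set $\bX := \be_{n+1}\wedge \bZ - \bY \in \wedge^{d+1}\Z^{n+1}$ and invoke \eqref{e-111} and \eqref{e-222} to conclude $|\bX| \le c_1(n,|\bx|)\,N$ and $|\bx'\wedge \bX|\le c_2(n,|\bx|)\,\eps N^{-\omega_d}$. Moreover one checks that $\bX \ne 0$: if it vanished then $\bZ=0$ (from the direct sum below), contradicting $\bZ \ne 0$ in Definition~\ref{BLdef}.

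For the reverse implication, I would use the orthogonal splitting
\[
\wedge^{d+1}\R^{n+1} \;=\; \bigl(\be_{n+1}\wedge \wedge^{d}\R^n\bigr) \,\oplus\, \wedge^{d+1}\R^n
\]
to decompose any $\bX \in \wedge^{d+1}\Z^{n+1}\setminus\{0\}$ as $\bX = \be_{n+1}\wedge \bZ - \bY$ with $\bZ \in \wedge^{d}\Z^n$ and $\bY \in \wedge^{d+1}\Z^n$. Then $\|\bZ\| \le |\bX|$ directly, and from the identity $\bx'\wedge \bX = -(\be_{n+1}+\bx)\wedge(\bx\wedge \bZ + \bY)$, together with the orthogonality of $\be_{n+1}$ to $\wedge^{j}\R^n$, the left-hand inequality of \eqref{e-222sv} gives $\|\bx\wedge \bZ + \bY\| \le |\bx\wedge\bZ + \bY| \le |\bx'\wedge \bX|$. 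Thus $|\bX|\le N$ and $|\bx'\wedge\bX|\le \eps N^{-\omega_d}$ translate into $\|\bZ\|\le N$ and $\|\bx\wedge\bZ + \bY\| \le \eps N^{-\omega_d}$ up to harmless constants.

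The one subtlety, and the only genuine obstacle, is ensuring $\bZ \ne 0$ in the reverse direction, since Definition~\ref{BLdef} demands this. If $\bZ = 0$ then $\bX = -\bY \in \wedge^{d+1}\Z^{n}\setminus\{0\}$, in which case $\bx'\wedge \bX$ has a nonzero $\be_{n+1}$-component (namely $\be_{n+1}\wedge \bY$), giving $|\bx'\wedge \bX| \ge |\bY|\ge 1$. For $N > N_0(\bx,\eps)$ this violates the required smallness $|\bx'\wedge \bX|\le c\,\eps N^{-\omega_d}$, and hence such degenerate $\bX$ cannot occur for large $N$. Once this is noted, the equivalence of the two definitions for both $\Sing_n^d$ and $\Bad_n^d$ follows directly from the set-theoretic definitions, the $\eps$-quantifier making the multiplicative constants $c_1(n,|\bx|), c_2(n,|\bx|)$ immaterial.
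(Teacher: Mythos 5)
Your proposal follows essentially the same route as the paper: reduce to Lemma~\ref{l-bad-sing}, use the direct-sum decomposition $\wedge^{d+1}\ZZ^{n+1}= \be_{n+1}\wedge (\wedge^{d}\ZZ^{n})\oplus \wedge^{d+1}\ZZ^{n}$ together with the comparability estimates \eqref{e-111}/\eqref{e-222} (equivalently \eqref{e-compare1}/\eqref{e-compare2}), and absorb the resulting constants into $\eps$. The only minor difference is in handling $\bZ\neq 0$ in the reverse direction: the paper notes that $\bZ=0$ would force the integer vector $\bY$ to vanish since $\|\bY\|\le \eps N^{-\omega_d}<1$, whereas you reach the same contradiction via the lower bound $|\bx'\wedge\bX|\ge|\bY|\ge 1$ coming from the orthogonal $\be_{n+1}$-component; both arguments are correct.
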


\begin{proof}
Let us write  $\widetilde{\Sing}_n^d$ (resp. $\widetilde{\Bad}_n^d$) to denote the set of $d$-singular vectors  (resp. $d$-badly approximable vectors )  given by \eqref{equ-gen} (resp. \eqref{slv-bad-d}).  Then,   Lemma~\ref{l-bad-sing} states  that
\begin{itemize}
\item $\bx \in \widetilde{\Sing}_n^d$ if and only if for any given $\eps \in (0,1)$ and $N >N_0(\bx, \eps)$ sufficiently large, there exists $\bX\in \wedge^{d+1} \ZZ^{n+1}\setminus\{0\}$, such that
      \begin{equation}\label{e-equation-singA}
        |\bX|\le N     \quad \text{and} \quad |\bx'\wedge \bX|\le \eps N^{-\omega_d}.
      \end{equation}
  \item  $\bx\in \widetilde{\Bad}_n^d$ if and only if there exists a constant $\eps:=\eps(\bx) \in (0,1)$
      such that for any $N >N_0(\bx)$ sufficiently large, there are no solutions $\bX\in \wedge^{d+1} \ZZ^{n+1}\setminus\{0\}$ to \eqref{e-equation-singA}.
\end{itemize}
As in the proof of Step~2 in \S\ref{DTTMA},   let $\be_{n+1}=(0,\ldots, 0, 1)\in \bZ^{n+1}$ and identify the set  $\{\by=(y_1,\ldots,y_{n+1})\in \ZZ^{n+1}: y_{n+1}=0 \}$ (resp. $\{\by=(y_1,\ldots,y_{n+1})\in \RR^{n+1}: y_{n+1}=0 \}$) with $\ZZ^n$ (resp. $\RR^n$). Then,  we have that
\[\wedge^{d+1}\ZZ^{n+1}= \be_{n+1}\wedge (\wedge^{d}\ZZ^{n})\oplus \wedge^{d+1}\ZZ^{n} \, .\]
Hence, we can write any $\bX\in \wedge^{d+1}\ZZ^{n+1}\setminus \{0\}$ uniquely as $\be_{n+1}\wedge \bZ-\bY$ with $\bZ \in \wedge^{d}\ZZ^{n}$ and $\bY \in \wedge^{d+1}\ZZ^{n}$ such that either $\bZ$ or $\bY$ is non-zero. Then, the same argument leading to \eqref{e-222sv}, enables us to conclude that
\begin{equation}\label{e-compare1}
|\bx \wedge \bZ+\bY|\le |\bx'\wedge \bX| \le |\bx'| |\bx \wedge \bZ+\bY|.
\end{equation}
Note that we also have that
\begin{equation}\label{e-compare2}
 \max\{|\bZ|, |\bY|\}\le |\bX| \le 2\max\{|\bZ|, |\bY|\}.
\end{equation}

Now fix  $\eps\in (0,1)$ and $  \bx  \in \RR^n$.  Let
$N> \max\{|\bx|^{-1/\omega_d}, 1\}$ and suppose that  $\bZ \in \wedge^{d}\ZZ^{n}\setminus \{0\}$ and $\bY \in \wedge^{d+1}\ZZ^{n}$ is a solution to \eqref{e-equation-sing2}.   Let $\bX\in \wedge^{d+1}\ZZ^{n+1}$ be the corresponding unique point satisfying \eqref{e-compare1} and \eqref{e-compare2}.   Then, the same argument leading to   \eqref{e-111} and \eqref{e-222},  shows that $\bX$ satisfies
\[|\bX|   \, \le  \,   n^{\frac12} \, 2^{\frac{n}{2}}(2|\bx|+1)N \quad \text{ and }\quad
 |\bx'\wedge \bX|\le \eps 2^{\frac{n}{2}}|\bx'|N^{-\omega_d}. \]
 Conversely, let $N> \max\{|\bx|^{-1/\omega_d}, 1\}$ and suppose that $\bX \in \wedge^{d+1}\ZZ^{n+1}\setminus \{0\}$  is a solution to \eqref{e-equation-singA}. Let $\bZ \in \wedge^d \ZZ^n$ and $\bY \in \wedge^{d+1} \ZZ^n$ be the corresponding unique points (not both zero)  satisfying \eqref{e-compare1} and \eqref{e-compare2}.  Then it follows that using  the
 left hand side of \eqref{e-compare1} and \eqref{e-compare2}, that  $\bZ $ and $\bY $ satisfy
\[ \|\bZ\|  \le N  \quad \text{ and } \quad \|\btheta \wedge \bZ+\bY\| \le \eps N^{-\omega_d}.\]
It remains to show that $\bZ \neq 0 $.   Suppose it is. Then since the right hand side of the second inequality is strictly less than one, we must have that  $\bY=0$.   This contradicts the assumption that both are not zero.  Hence we must have $\bZ \in \wedge^d \ZZ^n\setminus \{0\}$. This completes the proof.
\end{proof}

\bigskip

We now proceed by describing a  dynamical reformulation of the  intermediate Diophantine sets associated with Definition~\ref{BLdef}.  This will  enable us to provide an alternative proof of Proposition \ref{prop-equiv} that is self-contained in that it avoids appealing to \cite[Lemma 3]{BugLau2}.    Moreover, the dynamical reformulation  will enable us to extend  the  Davenport $\&$ Schmidt result \cite[Theorem 2]{DavSch} concerning the equivalence of the simultaneous and dual Dirichlet improvable sets to intermediate Dirichlet improvable sets.

For simplicity,  given  $n\ge 2$ and $0\le d\le n-1$, we write
\[A_d:={n \choose d} \; ,  \quad  B_d:={n \choose d+1} \quad \text{ and } \quad  N_d:=A_d+B_d={n+1 \choose d+1}.\]
For $\bx\in \RR^n$, let $  H_{d,\bx}$  to be the linear transformation defined as
\[H_{d,\bx}: \wedge^d \RR^{n} \rightarrow \wedge^{d+1} \RR^{n} \, : \,  \bZ \mapsto \bx\wedge \bZ.\]
Under the standard basis, $H_{d,\bx}$ is given as a matrix $M_{d, \bx}\in M_{A_d\times B_d}(\RR)$.
Recall,  a matrix $M\in M_{A\times B}(\RR) $ of $A$ rows and $B$ columns, is said to be
\begin{itemize}
  \item \emph{Dirichlet improvable} if and only if  there exists $\eps=\eps(M)\in (0, 1)$ such that for all $N\ge N_0(M, \eps)$ sufficiently large, there exists $\bY\in \ZZ^A$ and $\bZ\in\ZZ^B\setminus \{0\}$ such that
      \begin{equation}\label{def-matrix}
        \|\bZ\|\le N \quad \text{ and } \quad \|M\bZ+\bY\|\le \eps  N^{-\frac{B}{A}}  \, .
      \end{equation}
  \item \emph{singular} if and only if for any given $\eps\in (0,1)$ and  $N\ge N_0(M, \eps)$ sufficiently large, there exists $\bY\in \ZZ^A$ and $\bZ\in\ZZ^B\setminus \{0\}$ such that \eqref{def-matrix} holds.
  \item \emph{badly approximable} if and only if there exists $\eps=\eps(M)\in (0, 1)$ such that for all $N\ge N_0(M, \eps)$ sufficiently large, there are no solutions $\bY\in \ZZ^A$ and $\bZ\in\ZZ^B\setminus \{0\}$ to \eqref{def-matrix}.
\end{itemize}

\medskip

\noindent On comparing the notions as given in Definition \ref{BLdef} with the corresponding matrix notions above we immediately obtain the following statement.

\begin{lemma}\label{l-easy}  Let $ \bx \in \R^n$, $0\le d\le n-1$ be an integer, $A=A_d$ and $B=B_d$. Then
$\bx$ is $d$-Dirichlet improvable (resp. $d$-badly approximable,  $d$-singular)  if and only if the matrix $M=M_{d, \bx}$ is Dirichlet improvable  (resp. badly approximable, singular).
\end{lemma}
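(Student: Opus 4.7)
The plan is to show that, once the wedge-product spaces are identified with Euclidean space via their standard bases, the inequalities appearing in Definition~\ref{BLdef} become literally identical to the matrix inequalities \eqref{def-matrix} for the matrix $M_{d,\bx}$. After that the three biconditionals reduce to inspection of quantifiers, so nothing further is needed.

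First, I would fix the standard orthonormal bases on $\wedge^d\RR^n$ and $\wedge^{d+1}\RR^n$, indexed by increasing $d$-subsets and $(d+1)$-subsets of $\{1,\dots,n\}$ respectively. This yields bijective isometries
\[
\wedge^d\RR^n\cong\RR^{A_d},\qquad \wedge^{d+1}\RR^n\cong\RR^{B_d},
\]
under which $\wedge^d\ZZ^n$ and $\wedge^{d+1}\ZZ^n$ correspond to the standard integer lattices $\ZZ^{A_d}$, $\ZZ^{B_d}$, and the max norms on the wedge products correspond to the max norms on $\RR^{A_d}$ and $\RR^{B_d}$. By the very definition of $M_{d,\bx}$ as the coordinate matrix of $H_{d,\bx}\colon\bZ\mapsto\bx\wedge\bZ$, the identification turns $\bx\wedge\bZ+\bY$ into $M_{d,\bx}\bZ+\bY$, where $M_{d,\bx}$ has $A:=B_d$ rows and $B:=A_d$ columns. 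Next I would check that the exponents also match: a direct computation gives
\[
\frac{B}{A}\;=\;\frac{A_d}{B_d}\;=\;\binom{n}{d}\Big/\binom{n}{d+1}\;=\;\frac{d+1}{n-d}\;=\;\omega_d,
\]
so $N^{-B/A}=N^{-\omega_d}$ and the pair $\|\bZ\|\le N,\ \|M_{d,\bx}\bZ+\bY\|\le\eps N^{-B/A}$ from \eqref{def-matrix} is literally the same as $\|\bZ\|\le N,\ \|\bx\wedge\bZ+\bY\|\le\eps N^{-\omega_d}$ from \eqref{e-equation-sing2}. The constraint $\bZ\in\ZZ^{B}\setminus\{0\}$ transfers to $\bZ\in\wedge^d\ZZ^n\setminus\{0\}$, and $\bY\in\ZZ^A$ to $\bY\in\wedge^{d+1}\ZZ^n$.

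With this dictionary in place the three biconditionals follow at once. In each case the outer quantifier structure on $\eps\in(0,1)$ and on $N\ge N_0$ is identical on both sides, and the inner existence (or, for the badly approximable case, non-existence) of an integer solution to the relevant system matches verbatim. The main --- and really only --- thing to watch will be the bookkeeping: keeping straight that the role of $A$ is played by $B_d$ and the role of $B$ by $A_d$ in the matrix-theoretic framework, and that the non-vanishing condition $\bZ\ne 0$ is preserved by the lattice identification (which it trivially is, being a bijection of lattices). No further analytical input beyond Definition~\ref{BLdef} and the definitions recalled just before the lemma should be required.
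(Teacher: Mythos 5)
Your proposal is correct and takes the same route as the paper, which dispatches Lemma~\ref{l-easy} with a one-line remark that the claim ``follows directly on comparing the notions as given in Definition~\ref{BLdef} with the corresponding matrix notions.'' You have simply written out that comparison explicitly, and the bookkeeping you carry out is the right one: identifying $\wedge^d\ZZ^n\cong\ZZ^{A_d}$ and $\wedge^{d+1}\ZZ^n\cong\ZZ^{B_d}$ via the standard orthonormal bases turns $\bx\wedge\bZ+\bY$ into $M_{d,\bx}\bZ+\bY$, and a short computation gives $B/A=A_d/B_d=\binom{n}{d}/\binom{n}{d+1}=(d+1)/(n-d)=\omega_d$, after which the three equivalences are quantifier-for-quantifier transcriptions of each other.

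One point worth flagging, which you handled correctly but the paper does not: since $H_{d,\bx}$ maps $\wedge^d\RR^n$ (dimension $A_d$) into $\wedge^{d+1}\RR^n$ (dimension $B_d$), and since in the paper's convention an $M_{A\times B}$ matrix has $A$ rows and $B$ columns and multiplies vectors $\bZ\in\ZZ^B$, the representing matrix must have $B_d$ rows and $A_d$ columns, so $M_{d,\bx}\in M_{B_d\times A_d}(\RR)$. Your reading ``$A:=B_d$, $B:=A_d$'' is therefore the correct one; the line in the paper asserting $M_{d,\bx}\in M_{A_d\times B_d}(\RR)$ has the indices transposed, and if you had followed it literally the exponent $B/A$ would have come out as $1/\omega_d$ rather than $\omega_d$. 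Good catch.
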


Using this lemma, we can provide another proof of Proposition \ref{prop-equiv}.
\begin{proof}[Alternative proof of Proposition \ref{prop-equiv}.]
 Let
 \begin{equation} \label{qslv3}
\rho_d:\SL_{n+1}(\RR) \rightarrow \SL_{N_d}(\RR)  \, ,
\end{equation}
be the homomorphism induced by the natural representation on $\wedge^{d+1} \RR^{n+1}$. It is easily seen that
\[\rho_d(\SL_{n+1}(\ZZ))\subset \SL_{N_d}(\ZZ) \quad \text{ and } \quad \rho_d(g_t)=g_t^d  \, , \]
where
\[g_t:=\begin{pmatrix} e^{-t} I_n & 0 \\0 & e^{nt} \end{pmatrix} \quad \text{ and } \quad g_t^d:= \begin{pmatrix}
e^{-B_d t}I_{A_d} & 0 \\ 0 & e^{A_d t}I_{B_d} \end{pmatrix}. \]
According to \cite[Theorem 1.13]{Rag}, the induced map
\begin{equation} \label{wslv3} \phi_d: X_{n+1}\rightarrow X_{N_d} \quad \text{ where } \quad X_m:=\SL_m(\RR)/\SL_m(\ZZ)
\end{equation}
 is proper. This together with Dani's correspondence \cite{Dani} and Lemma~\ref{l-easy}, implies that
 \begin{align*}
   \text{$\bx$ is $d$-badly approximable} &\Longleftrightarrow \text{ the orbit $\{g_t^d \Lambda_{M_{d, \bx}}: t\ge 0\}$ is bounded} \\
   &\Longleftrightarrow \text{ the orbit $\{g_t \Lambda_\bx: t\ge 0\}$ is bounded} \\
   &\Longleftrightarrow \text{ $\bx$ is badly approximable} \, .
 \end{align*}
  Here and elsewhere,
 \[\Lambda_{\bx}:=\begin{pmatrix}  I_n & 0 \\ \bx & 1 \end{pmatrix}\ZZ^{n+1} \quad \text{ and } \quad \Lambda_{M_{d, \bx}}:= \begin{pmatrix}  I_{A_d} & 0 \\ M_{d, \bx} & I_{B_d} \end{pmatrix}\ZZ^{N_d}.\]
 The proof of equivalence in the singular case  is similar and  we leave the details to the reader.
\end{proof}

Concerning the set of intermediate Dirichlet improvable vectors, we are able to prove the following statement. {It shows that the sets $\DI_n^d$ are all the same. This is of course  in perfect  harmony  with the situation concerning the intermediate badly approximable and singular sets.  }

\begin{prop}\label{prop-d-di}
Let $n \in \NN$ and $d$ be an integer satisfying  $0\le d\le n-1$. Then $$\DI_n^d=\DI_n. $$
\end{prop}

\noindent {Recall, that  $\DI_n$ is the set of Dirichlet improvable points in $\R^n$ defined via  the classical simultaneous or (equivalently) dual form of  Dirichlet's theorem.  Note that the proposition trivially implies the Davenport $\&$ Schmidt result \cite[Theorem 2]{DavSch} conveniently  summed up by \eqref{stillagain}. }
 The proof of  Proposition~\ref{prop-d-di} is
 based on a dynamical reformulation of $\DI_n^d$, which in turn relies on the following theorem of Haj{\'o}s \cite{Hajos}.

\begin{thm}[Haj{\'o}s]\label{Hajos}
Let $k\in \NN$ and $L\subset \RR^k$ be a lattice of covolume $1$. Then
\[ L \cap \Pi_k= \emptyset \Longleftrightarrow L \in \bigcup_{w\in \Sym_k} w^{-1}U_k w \ZZ^k,
\]
where $\Pi_k:=(-1,1)^k$ denotes the open unit cube in $\RR^k$, $\Sym_k$ represents the Weyl group and $U_k\subset \SL_k(\RR)$ denotes the subgroup  of upper triangular matrices with all diagonal entries equal to $1$.
\end{thm}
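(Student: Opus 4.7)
The plan is to establish the implication $(\Leftarrow)$ by direct verification and the implication $(\Rightarrow)$ by induction on $k$. For the easy direction, I would assume $L = w^{-1}Uw \, \Z^k$ with $U \in U_k$ unipotent upper triangular and $w \in \Sym_k$. Since conjugation by the permutation amounts to relabelling coordinates, it suffices to check that $U\Z^k$ has no non-zero point in the open cube $(-1,1)^k$. Writing such a point as $v = Un$ with $n \in \Z^k \setminus \{0\}$ and letting $i$ be the largest index with $n_i \ne 0$, the unipotent upper triangular structure forces the $i$-th coordinate of $v$ to be exactly $n_i$, which has absolute value at least $1$; hence $v \notin \Pi_k$.

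For the hard direction, the argument proceeds by induction on $k$. The base case $k=1$ is immediate since a covolume-one lattice in $\R$ is $\Z$, already in the required form. For the inductive step, assume the result in dimension $k-1$ and let $L$ be admissible of covolume $1$ in $\R^k$. I would first apply Minkowski's second theorem to the symmetric convex body $C=[-1,1]^k$ of volume $2^k$: the product of successive minima satisfies $\lambda_1\cdots\lambda_k \le 2^k/(2^k\det L)=1$. Combined with $\lambda_1 \ge 1$ (the admissibility hypothesis), this forces $\lambda_1=\cdots=\lambda_k=1$. Hence there exist $k$ linearly independent vectors $v_1,\ldots,v_k \in L \cap C$; each $v_i$ is non-zero and cannot lie in the open interior of $C$, so each $v_i$ lies on $\partial C$ and has at least one coordinate equal to $\pm 1$.

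The crux of the proof, and the genuine content of Hajós's theorem, is to promote this to the statement that in fact $L$ contains a standard basis vector $\pm e_m$ for some $m\in\{1,\ldots,k\}$, i.e.\ a $v_i$ all of whose coordinates but one vanish. Once this pivot vector is secured, I would consider the sublattice $L' := L \cap \{x_m=0\}$ and note that $L = \Z e_m \oplus L'$ as an abelian group. Since $e_m$ is perpendicular to $\{x_m=0\} \cong \R^{k-1}$ and of unit length, the induced covolume of $L'$ in this hyperplane is again $1$. Admissibility is inherited: any non-zero vector in $L' \cap (-1,1)^{k-1}$ would, viewed as a vector in $L$ with vanishing $m$-th coordinate, lie in $\Pi_k$, contradicting the admissibility of $L$. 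The inductive hypothesis then provides $w' \in \Sym_{k-1}$ and $U' \in U_{k-1}$ presenting $L'$ in the desired form, and combining this with the pivot $\pm e_m$ yields the required presentation of $L$ after inserting $m$ into the appropriate position of the permutation.

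The main obstacle, as one expects from this celebrated theorem, is the pivot step itself: proving that an admissible covolume-one lattice must contain a standard basis vector. The successive-minima argument delivers vectors on the boundary of the cube, but such a vector need not be axis-aligned (e.g.\ $(1,1)\in \partial[-1,1]^2$). Haj\'os's original 1941 argument resolves this by a delicate group-theoretic reduction, translating the claim into the statement that any factorisation of a finite abelian group into simplex (single-generator) subsets must include at least one subgroup as a factor; admissibility of $L$ forces a rich enough factorisation for the conclusion to be extracted inductively. In the write-up I would either invoke Haj\'os's pivot lemma as a black box or sketch the reduction to finite-group factorisations, since faithfully reproducing the full argument inside the appendix would considerably overshoot the scope of its application to Proposition~\ref{prop-d-di}.
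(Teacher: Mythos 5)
The paper does not prove Theorem~\ref{Hajos}: it is stated as a cited classical result (Haj\'os \cite{Hajos}) and used as a black box in the proof of Proposition~\ref{prop-d-di}. So there is no in-paper argument to compare against, and an outline that correctly reduces the statement to a citable lemma would be entirely in keeping with the paper's treatment. Your $\Leftarrow$ direction is correct as written (note that $\Pi_k$ must be read as the punctured open cube $(-1,1)^k\setminus\{0\}$, since $0\in L$ always; your verification via the largest index with $n_i\ne 0$ is exactly right).

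The $\Rightarrow$ direction, however, has a genuine error in the inductive step. The claim that once $\pm e_m\in L$ one has $L=\ZZ e_m\oplus L'$ with $L':=L\cap\{x_m=0\}$, and that $L'$ then has covolume $1$, is false. Take $k=2$, $m=1$, and $L=\ZZ(1,0)+\ZZ(\tfrac12,1)$: this lattice has covolume $1$, is admissible, and contains $e_1$, yet $L\cap\{x_1=0\}=\ZZ(0,2)$, so $\ZZ e_1\oplus L'$ has index $2$ in $L$ and the covolume of $L'$ is $2$. The correct reduction projects rather than intersects: let $\pi_m:\RR^k\to\RR^{k-1}$ delete the $m$-th coordinate and set $\bar L:=\pi_m(L)$. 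Admissibility forces $L\cap\RR e_m=\ZZ e_m$, so $\bar L$ is a lattice of covolume $1$; and $\bar L$ is admissible, because a nonzero $\bar v\in\bar L\cap(-1,1)^{k-1}$ lifts to some $v\in L$, and $v-\lfloor v_m\rceil e_m$ would then be a nonzero point of $L$ in $(-1,1)^k$. Applying the inductive hypothesis to $\bar L$, lifting a unipotent basis, and prepending $e_m$ (choosing the permutation $w$ so that $w(m)=1$, making the corresponding column of $w^{-1}U w$ equal to $e_m$) yields the required presentation of $L$. With this correction your reduction to the pivot lemma is sound; but note that the pivot lemma --- an admissible covolume-one lattice contains some $\pm e_m$ --- \emph{is} Haj\'os's theorem, so the Minkowski-second-theorem step, while correct, brings you no closer to it and can simply be omitted. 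Deferring the pivot lemma to \cite{Hajos} is, of course, precisely what the paper does.
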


{By Dani's correspondence, $\bx\in\DI_n$  if and only if }the $\omega$-limit set
\[\left\{\Lambda\in X_{n+1}: \text{ there exists } (t_k)_{k\ge 0} \text{ with } \lim_{k\rightarrow \infty}t_k=\infty \text{ and } g_{t_k}\Lambda_\bx=\Lambda \right\} \]
does not intersect the set
\[E:=\{\Lambda\in X_{n+1}: \Lambda\cap \Pi_{n+1}=\emptyset\}.\]
It follows from Theorem \ref{Hajos}, that
\[E=\bigcup_{w\in \Sym_{n+1}} w^{-1}U w \ZZ^{n+1}.\]
Let $ \phi_d$ be the map given by \eqref{wslv3},
then, in view of Lemma \ref{l-easy}, $\bx$ is $d$-Dirichlet improvable if and only if the $\omega$-limit set
\[\left\{\Lambda\in X_{N_d}: \text{ there exists } (t_k)_{k\ge 0} \text{ with } \lim_{k\rightarrow \infty}t_k=\infty \text{ and } \phi_d(g_{t_k}\Lambda_\bx)=\Lambda \right\} \]
does not intersect the set
\[E_{d}:=\bigcup_{w\in \Sym_{N_d}} w^{-1}U_{N_d} w \ZZ^{N_d}.\]
Thus, $\bx$ is $d$-Dirichlet improvable if and only if the $\omega$-limit set
\[\left\{\Lambda\in X_{n+1}: \text{ there exists } (t_k)_{k\ge 0} \text{ with } \lim_{k\rightarrow \infty}t_k=\infty \text{ and } g_{t_k}\Lambda_\bx=\Lambda \right\} \]
does not intersect the set
\[E_{d}':=X_{n+1}\cap \phi_d^{-1}(E_d).\]

The upshot of the above is that  Proposition \ref{prop-d-di} is {a direct consequence of} the following statement.
%\lifan{It is not clear whether Proposition \ref{prop-d-di} is equivalent to the following statement}

\begin{lemma}\label{lem-e-ed}
Let $n \in \NN$ and $d$ be an integer satisfying  $0\le d\le n-1$.   Then, $E=E_d'$.
\end{lemma}

We now proceed with the proof of Lemma~\ref{lem-e-ed}.  This will be done in two steps.

\medskip

\noindent{$\bullet$} {\em Step 1:}  $E\subseteq E_d' \, . \ $ This inclusion is straightforward.
Indeed, for any $w\in \Sym_{n+1}$, we have that
\[\rho_d(w^{-1}U_{n+1}w)  \subseteq \psi_d(w)^{-1}U_{N_d}\psi_d(w),\]
where $ \rho_d$ is given by \eqref{qslv3} and  $\psi_d$ is the natural map from $\Sym_{n+1}$ to $\Sym_{N_d}$.
Hence, it follows that  $E \subseteq  E_d'$.

\medskip

\noindent{$\bullet$} {\em Step 2:}  $E\supseteq E_d' \, . \ $
To prove this inclusion, it suffices to show that for any $w\in \Sym_{N_d}$, there exists $w_1\in \Sym_{n+1}$ such that
\begin{equation}\label{equ-crucial}
X_{n+1}\cap \phi_d^{-1}\left( w^{-1}U_{N_d} w \ZZ^{N_d} \right) \subseteq w_1^{-1}U w_1\ZZ^{n+1}.
\end{equation}
Without loss of generality, let us fix $w\in \Sym_{N_d}$ and write
\begin{equation}\label{vb-3}
H_1:=\phi_d(\SL_{n+1}(\RR)) \quad \text{ and } \quad H_2:= w^{-1}U_{N_d} w.
\end{equation}
The proof of the inclusion \eqref{equ-crucial} will make use of the  following technical lemma.

\begin{lemma}\label{lem-tech}
  Let $Y=H_1H_2\subset \SL_{N_d}(\RR)$. Then  $Y$ is a variety defined over $\ZZ$ and
\[Y(\ZZ)=  H_1(\ZZ) H_2(\ZZ).\]
\end{lemma}

\noindent For the moment, let us assume the truth of the lemma and continue with the proof of Step~2.  With this in mind, let $\Gamma=\SL_{N_d}(\ZZ)$ and note that
\[H_1\Gamma \cap H_2\Gamma=\bigcup_{\gamma\in \Gamma} (H_1 \gamma \cap H_2) \Gamma.\]
Now if
$H_1 \gamma \cap H_2\ne \emptyset$, then $\gamma\in Y(\ZZ)$ and in view of Lemma \ref{lem-tech}, it follows that  $\gamma\in H_1(\ZZ) H_2(\ZZ)$. Thus,
\[(H_1 \gamma \cap H_2) \Gamma= (H_1\cap H_2) \Gamma.\]
and so
\begin{equation}\label{equ-claim-int-h12}
H_1\Gamma \cap H_2\Gamma=(H_1\cap H_2) \Gamma.
\end{equation}
Next, let $T$ be the diagonal subgroup of $\SL_{n+1}(\RR)$ with positive entries.  Then $D:=\phi_d(T)$ is also a  connected  diagonal subgroup of $\SL_{N_d}(\RR)$. Note that both $H_1$ and $H_2$ are invariant under the conjugation of $D$  and so it follows that  their intersection $H_1\cap H_2$ is also invariant under the conjugation of $D$. Note that the group $H_1\cap H_2$ is clearly unipotent, thus connected.  Hence $D(H_1\cap H_2)$ is a connected solvable subgroup of $H_1$. Then there exists a Borel subgroup $B$ such that $D(H_1\cap H_2) \subset B$. Thus, there exists $w_1\in \Sym_{n+1}$ such that
\begin{equation}\label{equ-claim-int-h}
\phi_d^{-1}(H_1\cap H_2) \subset w_1^{-1} U w_1.
\end{equation}
Now \eqref{equ-crucial} follows from \eqref{equ-claim-int-h12} and \eqref{equ-claim-int-h}. This completes the proof of Step 2 and thereby the  proof of Lemma~\ref{lem-e-ed} modulo Lemma~\ref{lem-tech}.
\vspace*{-2ex}

\hfill $\square$

\medskip
 \begin{remark} \label{maybe}
With $n$ and $d$ as in Lemma \ref{lem-e-ed}, it is relatively straightforward to prove the weaker statement that
 \begin{equation} \label{easybit} \DI_n^d=\DI_n^{n-1-d}
 \end{equation}
without appealing to Lemma~\ref{lem-tech}. Note that when $d=0$ or $n-1$, this implies the Davenport $\&$ Schmidt result summed up by \eqref{stillagain}.  To prove  \eqref{maybe}, let $\tau_d: X_{N_d} \rightarrow X_{N_{n-d-1}}$ be the isomorphism induced by the Hodge star operator, which is a natural isomorphism between $\wedge^{d+1}(\RR^{n+1})$ and $\wedge^{n-d}(\RR^{n+1})$, see \S\ref{Sec_A2.1} for its definition. It is easily verified  that $\tau_d(E_d)=E_{n-d-1}$ and $\tau_d\circ\phi_d=\phi_{n-d-1}$. Hence, it follows that $E_d'=E_{n-d-1}'$ which in turn this implies  the desired result.
\end{remark}

\medskip

The proof of Lemma \ref{lem-tech} will make use of tools from non-abelian cohomology.  For a general reference on non-abelian cohomology, we refer the reader to Giraud's book \cite{Gir}.  On considering the action of $H:=H_1\times H_2$ on $Y$ defined by $(h_1, h_2) \times y \mapsto \psi_d(h_1)yh_2^{-1}$  we can identify $Y$ with the quotient space $H/(H_1\cap H_2)$.   It follows that
Lemma~\ref{lem-tech} is equivalent to saying  that $Y(\ZZ)$ consists of a single orbit under the action of $H(\ZZ)$. The space of orbits $Y(\ZZ)/H(\ZZ)$ is controlled by the non-abelian cohomology of $\ZZ$ with coefficients in $H_1\cap H_2$. The non-abelian cohomology that we are going to use is the fppf-cohomology and we will always work over $\Spec (\ZZ)$ - the spectrum of the commutative ring $\ZZ$ of integers. For any group scheme $\bH$ defined over $\Spec (\ZZ)$, we let
\[H^1(\ZZ, \bH):= H^1_{\mathrm{fppf}}\left(\Spec (\ZZ), \bH \right)\]
denote the fppf-cohomology of $\Spec (\ZZ)$ with coefficients in $\bH$.  With this in mind,  are now in the position to prove the lemma.

\begin{proof}[Proof of Lemma \ref{lem-tech}.]
For the sake of simplicity, we let $\bG:=\SL_{N_d}$ and  $\bH_1:=\SL_{n+1}$.  Let $\bT\subset \bH_1$ be the subgroup scheme of diagonal matrices. Let $\bH_2$ be the group scheme over $\ZZ$ given by $H_2$ endowed with their $\ZZ$-structure, where $H_2$ is given by \eqref{vb-3}.
 Let $\bY$ be the scheme defined over $\ZZ$ given by $Y$ endowed with its $\ZZ$-structure, where $Y$ is given in Lemma~\ref{lem-tech}. It is clear that, all these schemes are flat (indeed smooth) over $\ZZ$.

 There is an action of $\bH:=\bH_1\times \bH_2$ on $\bY$ such that for any ring $R$, we have
\[\bH(R) \times \bY(R) \to \bY(R), \quad (h_1, h_2)\times y \mapsto \phi_d(h_1) y h_2^{-1}.\]
Let $\bH_0=\bH_1\times_{\bG} \bH_2$; that is, for any ring $R$
\[
\bH_0(R)=\bH_1(R)\times_{\bG(R)} \bH_2(R)\,.
\]
It follows that $\bH_0$ is a  unipotent subgroup scheme of $\bH_1$ smooth over $\ZZ$ and is normalized by $\bT$. Consequently, $\bH_0$ is split over $\ZZ$. In particular, there is a sequence of smooth unipotent $\ZZ$-group schemes
\[\{1\}=\bH_0^{\dim \bH_0}\subset \bH_0^{\dim \bH_0-1}\subset \cdots \subset \bH_0^{2}\subset \bH_0^{1}=\bH_0\]
such that $\bH_{0}^{i+1}$ is normal in $\bH_{0}^{i}$ and $\bH_{0}^{i}/\bH_{0}^{i+1}\cong \bG_a$.
Now observe that $H^1(\ZZ, \bG_a)=\{1\}$. Then on arguing by induction on the dimension and making use of   \cite[Proposition~3.3.1]{Gir}, it follows that
\begin{equation}\label{equ-fppf}
H^1(\ZZ, \bH_0)=\{1\}.
\end{equation}
Furthermore, it is easily checked that $\bY$ is isomorphism to the quotient $\bH/\bH_0$. In view of \cite[Corollary 3.2.3]{Gir}, there is an injection
\[\bY(\ZZ)/\bH(\ZZ) \hookrightarrow H^1(\ZZ, \bH_0),\]
where $\bY(\ZZ)/\bH(\ZZ)$ denotes the orbit space of $\bY(\ZZ)$ under the action of $\bH(\ZZ)$. It follows from \eqref{equ-fppf} that $\bY(\ZZ)$ consists of a single orbit under the action of  $\bH(\ZZ)$. This completes the proof of Lemma \ref{lem-tech}.
\end{proof}

{

\subsection{The geometric approach}  \label{DTTGN}

In this section we explore a geometric approach to defining $d$-Dirichlet improvable sets. The approach taken is in line with that of \S\ref{sec-reform} in which the $d$-badly approximable and $d$-singular sets are expressed via successive minima; namely via \eqref{equ-equi} and \eqref{equ-equiSING}.

\subsubsection{An optimal Dirichlet type theorem via successive minima}\label{Sec_A2.1}

%Let $\lambda_{\bx,i}(t)$ and $L_{\bx, i}(t)$ be

Fix $ n \in \N$  and $ \bx \in \R^n$.    Then, as in \S\ref{PGN},  for each $i=1, \ldots, n+1$ and $t>0$, let
$\lambda_{\bx,i}(t):=\lambda_i\big(\ZZ^{n+1}, \sC_{\btheta}(\e^t)\big)$
be  the $i$-th successive minima of the convex body $\sC_{\btheta}(\e^t)$ given by \eqref{def-sc} with respect to the lattice $\Z^{n+1}$. In turn,  let
\begin{equation}\label{vb301}
L_{\bx, i}(t):=\log\lambda_{\bx,i}(t)
\end{equation}
be the natural logarithm of $\lambda_{\bx,i}(t)$.   It follows, via Minkowski's second convex body theorem,  that
\begin{equation}\label{vb8A}
\frac{e^t}{(n+1)!}  \, \le  \, \prod_{i=1}^{n+1}\lambda_{\bx,i}(t) \, \le  \,  e^t
\end{equation}
or equivalently
$$
t-\log(n+1)! \, \le  \, \sum_{i=1}^{n+1}L_{\bx,i}(t)  \, \le  \,  t\,.
$$
Now,  \eqref{vb8A} together with the fact that  $\lambda_{\bx,1}(t)\le \lambda_{\bx,2}(t) \le\dots\le\lambda_{\bx,n+1}(t)$ gives rise to  the following statement.

\begin{thm}\label{DirInterm}
Let $n \in \NN$ and $d $ be an integer satisfying $ 0 \le d \le n-1$. Then for any  $\bx\in \RR^n$ and $t > 0$
\begin{equation}\label{vb101B}
\prod_{i=1}^{n-d}\lambda_{\bx, i} \big(t) \; \le  \; e^{\frac{(n-d)t}{n+1}}\, ,
\end{equation}
or equivalently
\begin{equation*}\label{vb101C}
\sum_{i=1}^{n-d}L_{\bx, i}(t)\le \frac{(n-d)t}{n+1}\,.
\end{equation*}
\end{thm}

Theorem~\ref{DirInterm} can be viewed as an optimal Dirichlet type theorem for approximating points $\bx \in \R^n$ by $d$-dimensional rational subspaces. In the dual case ($d=n-1$) when we approximate by hyperplanes,  this realisation is relatively straightforward to see. For arbitrary $d\in\{0,\dots,n-1\}$, the following statement provides the link.
Recall that, by the definition of successive minima, given $\bx$ and $t>0$, there exist linearly independent vectors $\bv_1,\dots,\bv_{n+1}\in\Z^{n+1}$, depending on $\bx$ and $t$, such that
\begin{equation}\label{vb102}
\bv_i  \, \in  \, \lambda_{\bx,i}(t)  \; \sC_{\bx}(\e^t) \qquad (1\le i\le n+1)\,.
\end{equation}
As we shall soon see, the vectors play a key role in detecting the sought after  $d$-dimensional  rational subspaces. As usual, given  $ \bx \in \R^n$, we let   $\bx':=(\bx, 1)\in \RR^{n+1}$  and  we also let
$$\|\bx\|_1  \, :=  \, |x_1|+\dots+|x_n|   \, . $$

\begin{lemma}\label{lemB2A}
Let $n \in \NN$ and $d $ be an integer satisfying $ 0 \le d \le n-1$. Then for any  $\bx\in \RR^n$ and $t > 0$, let $\bv_1,\dots,\bv_{n+1}\in\Z^{n+1}$ be as in \eqref{vb102}.  In turn, let  $\bu_1,\dots,\bu_{d+1}$ be any basis of $V(\bv_1,\dots,\bv_{n-d})^\perp\cap\Z^{n+1}$, where $V(\bv_1,\dots,\bv_{n-d})^\perp$ is the linear subspace of $\R^{n+1}$ orthogonal to the vectors $\bv_1,\dots,\bv_{n-d}$. Finally, let  $\bX:=\bu_1\wedge\cdots\wedge\bu_{d+1}\in\bigwedge^{d+1}\Z^{n+1}$. Then
\begin{equation}
\|\bx'\wedge\bX\| \le (n-d)!(1+\|\bx\|_1)\left(\prod_{i=1}^{n-d}\lambda_{\bx,i}(t) \right) e^{-t}\label{vb110A}
\end{equation}
and
\begin{equation}
0<\|\bX\| \le (n-d)!(1+\|\bx\|_1)\left(\prod_{i=1}^{n-d}\lambda_{\bx,i}(t) \right)\label{vb110B}\,  .
\end{equation}
\end{lemma}

To prove the lemma, it will be useful to make use of the notion of the Hodge dual - the image of a multivector under the Hodge star operator. In relation to multivectors over $\R^{n+1}$, the Hodge star operator relates any basis $\ba_1,\dots,\ba_\ell$ of a vector subspace of $\R^{n+1}$ to a basis $\mathbf{b}_1,\dots,\mathbf{b}_{n+1-\ell}$ of the orthogonal subspace.  Naturally, we will use the symbol ${}^\perp$ to denote the Hodge dual within the context of multivectors over $\R^{n+1}$. In particular, $$\ba_1\wedge\dots\wedge\ba_\ell = \pm \,( \mathbf{b}_1\wedge\dots\wedge\mathbf{b}_{n+1-\ell})^\perp$$
provided the basis vectors under consideration  span parallelepipeds of the same volume.
We refer the reader to \cite[\S3]{Ber12} for  the definition of the Hodge star operator and its useful properties that we shall exploit. In particular, we have that for any indices $1\le i_1<\dots<i_{\ell}\le n+1$
\begin{equation}\label{hodge_coord}
(\be_{i_1}\wedge\dots\wedge \be_{i_\ell})^\perp=\pm \be_{j_1}\wedge\dots\wedge \be_{j_{n+1-\ell}}\,,
\end{equation}
where $\{j_1,\dots,j_{n+1-\ell}\}=\{1,\dots,n+1\}\setminus \{i_1,\dots,i_\ell\}$, and so the Hodge star operator preserves both Euclidean and maximum norms (in the standard basis).

\begin{proof}
To start with, simply note that  $\bX=\pm(\bv_1\wedge\cdots\wedge \bv_{n-d})^\perp\in\bigwedge^{d+1}\Z^{n+1}$ and so $\bX$ is   decomposable and non-zero. Without loss of generality,  we assume that the sign is positive.  By the properties of the   Hodge star operator, see \cite[Eq~(3.8)\&(3.9)]{Ber12}, we have that
\begin{equation}\label{vb204}
(\bX\wedge \bx')^\perp=((\bv_1\wedge\cdots\wedge \bv_{n-d})^\perp\wedge \bx')^\perp=
(-1)^{(d+1)(n-d)}(\bv_1\wedge\cdots\wedge \bv_{n-d})\cdot\vx'\,,
\end{equation}
where $(\bv_1\wedge\cdots\wedge \bv_{n-d})\cdot\vx'\in \bigwedge^{n-d-1}\R^{n+1}$ is the interior product of multivectors under consideration. By \cite[Eq~(3.6)]{Ber12}, the $\be_{j_1}\wedge\cdots\wedge\be_{j_{n-d-1}}$ coordinate of $(\bv_1\wedge\cdots\wedge \bv_{n-d})\cdot\vx'$ is equal to
$$
\big((\bv_1\wedge\cdots\wedge \bv_{n-d})\cdot\vx'\big)\cdot(\be_{j_1}\wedge\cdots\wedge\be_{j_{n-d-1}})=
(\bv_1\wedge\cdots\wedge \bv_{n-d})\cdot(\vx'\wedge\be_{j_1}\wedge\cdots\wedge\be_{j_{n-d-1}})\,.
$$
By Laplace's identity, see \cite[Eq~(3.3)]{Ber12}, this coordinate is equal to
\begin{equation}\label{vb103}
\det\left(\begin{array}{cccc}
             \bv_1\cdot\bx' & \bv_1\cdot\be_{j_1} & \dots & \bv_1\cdot\be_{j_{n-d-1}} \\
             \vdots & \vdots & \ddots & \vdots \\
             \bv_{n-d}\cdot\bx' & \bv_{n-d}\cdot\be_{j_1} & \dots & \bv_{n-d}\cdot\be_{j_{n-d-1}} \\
           \end{array}
\right)\,.
\end{equation}
By \eqref{vb102}, we have that $|\bv_i\cdot\bx'|\le \lambda_{\bx,i}(t) e^{-t}$ and $|\bv_i\cdot\be_{j}|\le \lambda_{\bx,i}(t)$ for $1\le j\le n$. In turn, since $\be_{n+1}=\bx'-\sum_{j=1}^{n}x_{j}\be_j$, we have that
$$
|\bv_i\cdot\be_{n+1}|=\left|\bv_i\cdot\bx'-\sum_{j=1}^{n}x_{j}\bv_i\cdot\be_j\right|\le \lambda_{\bx,i}(t) e^{-t}+\lambda_{\bx,i}(t) \|\bx\|_1
\le \lambda_{\bx,i}(t)\,(1+\|\bx\|_1)\,.
$$
Hence, by \eqref{vb103}, we get that
\begin{align*}
\|(\bv_1\wedge\cdots\wedge \bv_{n-d})\cdot\vx'\|& \le (n-d)!(1+\|\bx\|_1)\left(\prod_{i=1}^{n-d}\lambda_{\bx,i}(t) \right) e^{-t}\,.
\end{align*}
By \eqref{hodge_coord}, up to sign and order, the Hodge dual of a multivector has the same coordinates in the standard basis. Therefore, by \eqref{vb204} it follows that
\begin{align*}
\|\bx'\wedge\bX\|&=\|(\bv_1\wedge\cdots\wedge \bv_{n-d})\cdot\vx'\|\le (n-d)!(1+\|\bx\|_1)\left(\prod_{i=1}^{n-d}\lambda_{\bx,i}(t) \right) e^{-t}\, .
\end{align*}
This establishes \eqref{vb110A}.  Next,  calculating the $\be_{j_1}\wedge\cdots\wedge\be_{j_{n-d}}$ coordinate of $\bv_1\wedge\cdots\wedge\bv_{n-d}$ gives
\begin{equation}\label{vb8E}
(\bv_1\wedge\cdots\wedge \bv_{n-d})\cdot(\be_{j_1}\wedge\cdots\wedge\be_{j_{n-d}})\,.
=\det\left(\begin{array}{ccc}
             \bv_1\cdot\be_{j_1} & \dots & \bv_1\cdot\be_{j_{n-d}} \\
             \vdots & \ddots & \vdots \\
             \bv_{n-d}\cdot\be_{j_1} & \dots & \bv_{n-d}\cdot\be_{j_{n-d}} \\
           \end{array}
\right)\,.
\end{equation}
By the already established bounds on $|\bv_i\cdot\be_j|$, the absolute value of \eqref{vb8E} is bounded by
$$
(n-d)!(1+\|\bx\|_1)\left(\prod_{i=1}^{n-d}\lambda_{\bx,i}(t) \right)\,.
$$
This implies \eqref{vb110B} and thereby completes the proof of the lemma.
\end{proof}

\medskip

Given $\bx  \in \R^n$, $t> 0$ and  $i\in\{1,\dots,n+1\}$, it will be convinent to  define the following `normalised' value of the $i$-th successive minima:
\begin{equation}\label{vb8H}
\lambda^*_{\bx,i}(t):=e^{-\frac{t}{n+1}}\lambda_{\bx,i}(t)\,.
\end{equation}
Then,  \eqref{vb101B} becomes
$$
\prod_{i=1}^{n-d}\lambda^*_i\big(t,\bx)\le 1
$$
and \eqref{vb110A} and \eqref{vb110B}  respectively can be rewritten as
 \begin{align}
\|\bx'\wedge\bX\| &\le (n-d)!(1+\|\bx\|_1)\underbrace{\left(\prod_{i=1}^{n-d}\lambda^*_{\bx,i}(t) \right)}_{\hspace*{3ex}\le 1} e^{-\frac{(d+1)t}{n+1}},\label{vb110A+}\\[3ex]
\|\bX\|&\le (n-d)!(1+\|\bx\|_1)\underbrace{\left(\prod_{i=1}^{n-d}\lambda^*_{\bx,i}(t) \right)}_{\hspace*{3ex}\le 1}e^{\frac{(n-d)t}{n+1}}\label{vb110B+}\,.
\end{align}
Now, given $ N \in \N$  there exists $t \in \R  $  such that
\begin{equation}\label{vb_t}
(n-d)! (1+\|\bx\|_1) e^{\frac{(n-d)t}{n+1}}=N  \, .
\end{equation}
Clearly, if  $N$ is sufficiently large we can guarantee that $t>0$ and the following statement is a direct consequence of Lemma~\ref{lemB2A}.

\begin{prop}\label{lemB3A}
Let $n \in \NN$ and $d $ be an integer satisfying $ 0 \le d \le n-1$. Then for any  $\bx\in \RR^n$ and any
$$
N>(n-d)!(1+\|\bx\|_1) \, ,
$$
the decomposable vector $\bX\in\bigwedge^{d+1}\Z^{n+1}\setminus\{0\}$ associated with Lemma~\ref{lemB2A} with $t$ given by \eqref{vb_t},  satisfies
 \begin{equation*}
        \|\bx'\wedge\bX\|  \le c(n,d,\bx)\, N^{-\omega_d} \label{vb111A}  \qquad \text{and} \qquad \|\bX\| \le N \, ,
      \end{equation*}
where
$$
c(n,d,\bx):=\Big((n-d)! (1+\|\bx\|_1)\Big)^{\frac{n+1}{n-d}}\,.
$$
\end{prop}

Clearly, the proposition  is  a version of Lemma~\ref{l-d-type} obtained via the algebraic approach of \S\ref{noway}. We have already seen that the latter leads to Theorem~\ref{Dir-d}   --  a Dirichlet type statement concerning the approximation of points $\bx \in \R^n$  by $d$-dimensional rational subspaces. The upshot of this is that Theorem~\ref{DirInterm}, which implies Proposition~\ref{lemB3A} can thus be viewed as an optimal Dirichlet type theorem for approximating points by rational subspaces.

\subsubsection{Bad, Singular and Dirichlet Improvable in light of Theorem~\ref{DirInterm}  \label{almostdone}}

For any given $\bx \in \R^n$, observe that during the course of  establishing Proposition~\ref{lemB3A} we only used the trivial fact that
$\prod_{i=1}^{n-d}\lambda^*_{\bx,i}(t) \le 1$. Clearly if the product under consideration  gets significantly smaller than $1$, then we will obtain a stronger version of Proposition~\ref{lemB3A}. Specifically, if for some $\eps>0$
\begin{equation}\label{vb8G}
\prod_{i=1}^{n-d}\lambda^*_{\bx,i}(t) \le \eps
\end{equation}
for all sufficiently large $t$, then for any sufficiently large $N$ there exists a decomposable $\bX\in\wedge^{d+1}\Z^{n+1}$  such that
\begin{align*}
\|\bx'\wedge\bX\| \le c(n,d,\bx)\,\eps^{\frac{n+1}{n-d}} N^{-\omega_d} \qquad \text{and} \qquad
0<\|\bX\|\le N  \, .
\end{align*}
This is obtained in exactly the same way as Proposition~\ref{lemB3A}  is obtained from  Theorem~\ref{DirInterm} but now  we are able to insert the factor $\eps$ into the left hand side of \eqref{vb_t}.
Now observe that  in view of \eqref{vb301} and \eqref{vb8H}, inequality \eqref{vb8G} is equivalent to the statement that
$$
\frac{(n-d)t}{n+1}-\sum_{i=1}^{n-d}L_{\bx,i}(t)\ge \log(1/\eps)=:\delta\,
$$
which is precisely inequality \eqref{equ-equiSING}. We have already seen that the validity of \eqref{equ-equiSING} for arbitrarily large $\delta$ for all sufficiently large $t$ is equivalent to $\bx$ being $d$-singular (see Lemma~\ref{l-bad-sing2}).  As a consequence,  we have that:
\begin{itemize}
  \item $\bx \in \Sing_n^d$ if and only if for any  $\eps > 0$ there exists a constant $t_0= t_0({\eps})  > 0 $ such that \eqref{vb8G} holds for all $t \ge  t_0$.
\end{itemize}

\noindent Now suppose that  for some $\eps>0$
\begin{equation}\label{vb8G+}
\prod_{i=1}^{n-d}\lambda^*_{\bx,i}(t) \ge \eps
\end{equation}
 for all sufficiently large $t$.   This is easily seen to be equivalent to the statement that
$$
\frac{(n-d)t}{n+1}-\sum_{i=1}^{n-d}L_{\bx,i}(t)\le \log(1/\eps)=:\delta
$$
which is precisely inequality \eqref{equ-equi}.  This together with Lemma~\ref{l-bad-sing2} implies that

\begin{itemize}
\item  $\bx\in\Bad^d_n$ if and only if there exists $\eps>0$ such that \eqref{vb8G+} holds for all $t > t_0(\bx, \eps)$ sufficiently large.
\end{itemize}

\begin{remark}
Note that the above discussion  together with  Proposition~\ref{prop-equiv} and Lemma~\ref{age}, implies that the $d$-singular sets (respectively the $d$-badly approximable sets) defined via the algebraic approach of \S\ref{noway} or the  geometric approach of this section  coincide with the set $\Sing_n$ (respectively $\Bad_n$); that is, the set of singular (respectively, badly approximable)  points in $\R^n$ defined via either the classical simultaneous or dual form of  Dirichlet's  Theorem.
\end{remark}

\noindent The above observations naturally lead us to the following  notion of $d$-Dirichlet improvable points in $\R^n$.

\begin{definition} \label{VBdef}
Let $n \in \NN$ and $d $ be an integer satisfying  $0 \leq d \leq n-1$.  Then for  $0<\eps<1$, let
\begin{itemize}
\item
$\widehat\DI_n^d(\eps)$ to be the set of $\bx\in \RR^n$, such that inequality \eqref{vb8G},
or equivalently
\begin{equation}\label{vb101D}
\frac{(n-d)t}{n+1}-\sum_{i=1}^{n-d}L_{\bx, i}(t)\ge \log(1/\eps),
\end{equation}
holds for all $t > t_0(\bx, \eps)$ sufficiently large.
\end{itemize}
Moreover, let
$$
\widehat\DI_n^d:=\bigcup_{0<\eps<1}\widehat\DI_n^d(\eps)\, .
$$
\end{definition}

\medskip

Note that by definition,  $\bx\in \widehat\DI_n^d$ if and only if there exists $\eps\in(0,1)$ such that \eqref{vb8G}, or equivalently \eqref{vb101D}, holds for all sufficiently large $t$. The following result shows that these sets are in fact all equivalent and coincide with the set $\DI_n$; that is, the set of Dirichlet improvable points in $\R^n$ defined via either the classical simultaneous or dual form of  Dirichlet's  Theorem (both forms give rise to the same set).   In turn,  this together with Proposition~\ref{prop-d-di} implies that the set of $d$-Dirichlet improvable points defined via the algebraic approach (cf. Definition~\ref{BLdef}) and geometric approach also coincide.

\begin{lemma}\label{LA7}

\begin{itemize}
\item[{\rm(i)}] For any  $\eps\in(0,1)$,  we have that
\begin{equation}\label{vb8T=}
\widehat\DI^{n-1}_n(\eps)=\DI_n(\eps^{n+1})  \, .
\end{equation}
Therefore,  $\widehat\DI^{n-1}_n=\DI_n$.
\item[{\rm(ii)}]
For any $\eps\in(0,1)$ we have that
\begin{equation}\label{vb8TUP}
\widehat\DI^d_n(\eps)\subset \widehat\DI^{d+1}_n\left(\eps^{\frac{n-d-1}{n-d}}\right)\qquad\text{if }~~0\le d\le n-2
\end{equation}
and
\begin{equation}\label{vb8TDOWN}
\widehat\DI^d_n(\eps)\subset \widehat\DI^{d-1}_n\left(\eps^{\frac{d}{d+1}}\right)\qquad\text{if }~~1\le d\le n-1\, .
\end{equation}
Therefore,  the sets $\widehat\DI^d_n$ are the same for all $d\in\{0,1,\dots,n-1\}$.
\end{itemize}
\end{lemma}

\begin{proof}
Part (i) follows immediately on comparing \eqref{vb101D} in Definition~\ref{VBdef} and \eqref{DS+} in Lemma~\ref{DS}. Regarding Part~(ii), first suppose that $\eps\in(0,1)$, $0\le d\le n-2$ and $\bx\in\widehat\DI^{d}_n(\eps)$.  Then, by definition, for all sufficiently large $t$ we have \eqref{vb8G}. Now on raising \eqref{vb8G} to the power $n-d-1$ and using the inequalities $\lambda^*_{\bx,1}(t)\le\dots\le \lambda^*_{\bx,n+1}(t)$ we obtain that
\begin{align*}
\eps^{n-d-1}&\ge \big(\lambda^*_{\bx,1}(t)\cdots\lambda^*_{\bx,n-d}(t)\big)^{n-d-1}
\ge\big(\lambda^*_{\bx,1}(t)\cdots\lambda^*_{\bx,n-d-1}(t)\big)^{n-d}\,.
\end{align*}
Hence
$$
\lambda^*_{\bx,1}(t)\cdots\lambda^*_{\bx,n-d-1}(t)\le \eps^{\frac{n-d-1}{n-d}}
$$
and we conclude that $\bx\in\widehat\DI^{d+1}_n(\eps^{\frac{n-d-1}{n-d}})$. This proves the `going-up' inclusion \eqref{vb8TUP}.

To prove the `going-down' inclusion \eqref{vb8TDOWN}, let $\eps\in(0,1)$ and $1\le d\le n-1$ and $\bx\in\widehat\DI^{d}_n(\eps)$. Then, for all sufficiently large $t$ inequality \eqref{vb8G} holds and  since $\lambda^*_{\bx,1}(t)\le\dots\le \lambda^*_{\bx,n+1}(t)$, it follows via  Minkowski's second convex body theorem that
$$
\lambda^*_{\bx,n-d+1}(t)^{d+1}\le \lambda^*_{\bx,n-d+1}(t)\cdots \lambda^*_{\bx,n+1}(t)\le \frac{1}{\lambda^*_{\bx,1}(t)\cdots \lambda^*_{\bx,n-d}(t)}\,.
$$
Hence, on using \eqref{vb8G} we find that
$$
\prod_{i=1}^{n-d+1}\lambda^*_{\bx,i}(t)\le \left(\prod_{i=1}^{n-d}\lambda^*_{\bx,i}(t)\right)^{1-\frac{1}{d+1}}\le \eps^{\frac{d}{d+1}}\,.
$$
Since this holds for all sufficiently large $t$, we conclude that $\bx\in\widehat\DI^{d-1}_n\left(\eps^{\frac{d}{d+1}}\right)$ and this thereby  establishes  \eqref{vb8TDOWN}. The `therefore' statements in both parts of the lemma follow on taking the union over all $\eps\in(0,1)$ in \eqref{vb8T=}, \eqref{vb8TUP} and \eqref{vb8TDOWN}.
\end{proof}

\subsection{Final remarks: linear forms and weighted approximation}\label{Sec_A4}

The geometric approach of \S\ref{DTTGN} can be readily adapted to extend the definition of badly, singular and Dirichlet improvable points to systems of linear forms  and thereby consider the ``matrix'' analogue of the main problem considered in this paper. We will now develop/describe  this ``matrix'' theory within the  more general setup of {\em weighted} Diophantine approximation. The parametric geometry of numbers, the underlying tool exploited in proving our main result,  for weighted approximation has  recently been initiated by Schmidt \cite{Snew},  albeit it remains in its `infancy' to cope with the weighted analogue of the main question considered in this paper.

Let $ m, n \in \N $ and fix a pair of weights $\br=(r_1,\dots,r_n)\in\R^n_{\ge0}$ and $\bs=(s_1,\dots,s_m)\in\R^m_{\ge0}$ satisfying
$$
r_1+\dots+r_n=1\qquad\text{and}\qquad s_1+\dots+s_m=1\,.
$$
For $ t > 0$, let
$$
g_t={\rm diag}\{e^{-r_1t},\dots,e^{-r_nt},e^{s_1t},\dots,e^{s_mt}\}
\quad \text{ and } \quad
u_X= \begin{pmatrix}
I_n & 0 \\
X & I_{m} \end{pmatrix}\,,
$$
where $X$ is an $m\times n$ real matrix; i.e. $X\in M_{m\times n}(\RR)$. For each $i=1, \ldots, m+n$ and $t >0$,  let
$$
\lambda^*_{X,i}(t):=\lambda_i\big(g_t u_X\Z^{m+n}, B_1\big)
$$
denote the $i$-th successive minima of the convex body $B_1=[-1,1]^{m+n}$ with respect to the lattice $g_t u_X\Z^{m+n}$.
The following provides a natural generalisation of the Diophantine sets $\Bad^d_n$, $\Sing^d_n  $ and $\DI^d_n$  to the weighted matrix setup.

\begin{definition}
Let $X\in M_{m\times n}(\RR)$ and $d $ be an integer satisfying $ 0 \le d \le  m+n-2$.   Then for any  pair of weights $(\br,\bs)  \in \R^n \times \R^m$, we say that
\begin{itemize}
  \item[]
$X\in\Bad^d(\br,\bs)$ \,$\iff$ $\liminf\limits_{t\to\infty}\prod_{i=1}^{n+m-1-d}\lambda^*_{X,i}(t)>0$\,;
\item[]
$X\in\Sing^d(\br,\bs)$ $\iff$ $\lim\limits_{t\to\infty}\prod_{i=1}^{n+m-1-d}\lambda^*_{X,i}(t)= 0$\,;
\item[]
$X\in\DI^d(\br,\bs)$ \,\,\,\,\,$\iff$ $\limsup\limits_{t\to\infty}\prod_{i=1}^{n+m-1-d}\lambda^*_{X,i}(t)<1$\,.
\end{itemize}
\end{definition}

\medskip

\noindent {It is easily seen that up to the linear change of the variable $$t\mapsto \frac{n}{n+1}t \, ,$$
the definition of $\lambda^*_{X,i}(t)$ for matrices is consistent with that of $\lambda^*_{\bx,i}(t)$ for vectors $\bx\in\R^n$ given by \eqref{vb8H}.
 So if we put $m=1$ and $r_1= \ldots =r_n = 1/n$, we immediately see, in view of the discussion in \S\ref{almostdone}, that sets  $\Bad^d(\br,\bs)$, $\Sing^d(\br,\bs)  $ and  $ \DI^d(\br,\bs)$ coincide with $\Bad^d_n$, $\Sing^d_n  $ and $\DI^d_n$ respectively.
Moreover,} on using the same techniques as in the proof of Lemma~\ref{LA7}, it is relatively straightforward to show that for any fixed pair of weights $(\br,\bs)$ the badly approximable sets $\Bad^d(\br,\bs)$ are all the same;
the singular sets $\Sing^d(\br,\bs)$ are all the same, and the Dirichlet improvable sets $\DI^d(\br,\bs)$ are all the same. It is therefore natural to drop the dependence on `$d$' from the defining notation. With this in mind, the following is the natural ``matrix''  extension of the main question considered in this paper.

\medskip

\begin{problem}\label{prob-victor}
Let $ m, n \in \N $ such that $mn>1$. Prove that for any pair of weights $(\br,\bs)  \in \R^n \times \R^m$,
$$
\FS(\br,\bs):=  \DI(\br,\bs)\setminus\big(\Bad(\br,\bs)\cup \Sing(\br,\bs)\big)   \neq \emptyset  \, .
$$
\end{problem}

\medskip

We suspect that a lot more is true and the set under consideration has full dimension; that is to say that  $\dim \FS(\br,\bs) =nm$. This is in line with the content of Problem~\ref{prob-HD} appearing in \S\ref{FC} of the main body.

\begin{remark}
Potentially, the non-weighted case of Problem~\ref{prob-victor} can be resolved using the generalisation of Theorem~\ref{thm-roy} to systems of linear forms due to Das, Fishman, Simmons and Urba\'nski \cite[Theorem~5.2]{VarPrinc}. {However, there is currently a natural  barrier in using their generalisation for this purpose.   The point is that if we apply    \cite[Theorem~5.2]{VarPrinc},  then the analogue of the constant appearing in the right hand side of \eqref{vb+} becomes dependent  on the matrix $X$ under consideration.  The upshot of this is that we are not able to simply implement the machinery used to address Problem~\ref{prob-sanju} in order to deal with the analogous problem for systems of linear forms.}
\end{remark}

\vspace*{2ex}

\noindent{\bf Acknowledgements.} SV would like to thank Dorothy and the Tin Man for eighteen glorious years.  It has been a privilege watching you turn into feisty, loving, young people with firm moral compasses.   May you have many many wonderful adventures somewhere over the rainbow! SV would also like to take this opportunity to thank Prafula and Chiman for their love and for always being around for their little brother! Finally, many thanks to Bridget who, like me, sees our differences (oh and there are many!) as our great strength.

\vspace*{2ex}

\vspace{0ex}

\

{\small

\noindent Victor  Beresnevich:\\
 Department of Mathematics,
 University of York,\\
 Heslington, York, YO10 5DD,
 England\\
 E-mail: {\tt victor.beresnevich@york.ac.uk}

 \bigskip

 \noindent Lifan Guan:\\
 Mathematisches Institut,
 Georg--August Universit\"{a}t G\"ottingen, \\
Bunsenstr. 3–5, D–37073 G\"{o}ttingen,
Germany.\\
 E-mail: {\tt guanlifan@gmail.com}

 \bigskip

 \noindent Antoine Marnat:\\
 Fakultät für Mathematik, Universität Wien, Oskar-Morgenstern-Platz 1,  \\
 1090 TU Graz, Austria. \\
 E-mail: {\tt antoine.marnat@univie.ac.at}

\bigskip

\noindent Felipe A. Ram\'{\i}rez:\\
 Department of Mathematics and Computer Science,
 Wesleyan University,\\
265 Church Street
Middletown, CT 06459\\
 E-mail: {\tt framirez@wesleyan.edu}

\bigskip

\noindent Sanju L. Velani:\\
 Department of Mathematics,
 University of York,\\
 Heslington, York, YO10 5DD,
 England\\
 E-mail: {\tt sanju.velani@york.ac.uk}

}

\end{document}